\documentclass[letterpaper]{amsart}
\makeindex

\usepackage[alphabetic,initials,nobysame]{amsrefs}
\usepackage{amssymb,mathtools,mathrsfs,enumitem}
\usepackage{pgfplots}\pgfplotsset{compat=1.18}
\usepackage[bookmarksdepth=2]{hyperref}

\usepackage{pifont}
\newcommand{\cmark}{\textcolor{green!80!black}{\ding{51}}} % check mark
\newcommand{\xmark}{\textcolor{red}{\ding{55}}}   % x mark

\BibSpec{collection.article}{%
	+{}  {\PrintAuthors}				{author}
	+{,} { \textit}                  	{title}
	+{.} { }                         	{part}
	+{:} { \textit}                  	{subtitle}
	+{,} { \PrintContributions}      	{contribution}
	+{,} { \PrintConference}         	{conference}
	+{}  {\PrintBook}                	{book}
	+{,} { }                         	{booktitle}
	+{,} { }						 	{series}
	+{,} { }						 	{publisher}
	+{,} { }						 	{address}
	+{,} { \PrintDateB}              	{date}
	+{,} { pp.~}                     	{pages}
	+{,} { }                         	{status}
	+{,} { \PrintDOI}                	{doi}
	+{,} { available at \eprint}     	{eprint}
	+{}  { \parenthesize}            	{language}
	+{}  { \PrintTranslation}        	{translation}
	+{;} { \PrintReprint}            	{reprint}
	+{.} { }                         	{note}
	+{.} {}                          	{transition}
	+{}  {\SentenceSpace \PrintReviews} {review}
}

\theoremstyle{plain}
\newtheorem{theorem}{Theorem}
\newtheorem{proposition}[theorem]{Proposition}
\newtheorem{conjecture}[theorem]{Conjecture}
\newtheorem{problem}[theorem]{Problem}

\theoremstyle{definition}
\newtheorem{definition}[theorem]{Definition}

\numberwithin{equation}{section}
\numberwithin{theorem}{section}
\renewcommand{\bar}{\overline}
\newcommand{\R}{\mathbb R}
\newcommand{\C}{\mathbb C}
\newcommand{\D}{\mathbb D}

\newcommand{\N}{\mathbb N}

\DeclareMathOperator{\J}{\mathcal{J}}
\DeclareMathOperator{\dist}{{\mathrm{dist}}}
\DeclareMathOperator{\diam}{{\mathrm{diam}}}
\DeclareMathOperator{\id}{{\mathrm{id}}}

\DeclareMathOperator{\area}{\mathrm{Area}}
\DeclareMathOperator{\loc}{loc}

\newcommand{\cned}{\textit{CNED}}
\newcommand{\ned}{\textit{NED}}
\title{Uniformization problems in the plane: A survey}
\author{Dimitrios Ntalampekos}
\address{Department of Mathematics, Aristotle University of Thessaloniki, Thessaloniki, 54152, Greece.}
\thanks{The author was partially supported by the ERC Starting Grant, Grant Agreement no. 101214615, GRComPaS}
\email{dntalam@math.auth.gr}
\date{\today}
\keywords{uniformization, circle domain, conformal, Koebe's conjecture, uniform domain, cofat domain, cospread domain, Gromov hyperbolic, exhaustion, conformal rigidity, conformal removability, quasiconformal, quasidisk, quasicircle, quasiannulus, tangent quasicircles, Schottky set, Sierpi\'nski carpet, gasket, Julia set, relative hyperbolic distance}
\subjclass[2020]{Primary  30C20, 30C35, 30C62, 30C65, 30F10, 30F45, 37F10, 37F31, 37F32; Secondary 30L10, 30F99}

\begin{document}

	\begin{abstract}
		In this survey we present the history and recent progress on several fundamental (quasi)conformal uniformization problems in the complex plane. \textit{Uniformization} refers to the process of mapping a space to a canonical model by means of a well-behaved transformation that preserves the geometry and distorts shapes in a controlled fashion. A central problem in the area is Koebe's conjecture, which remains open after almost 120 years and predicts that each planar domain can be conformally mapped to a \textit{circle domain}---that is, a domain whose complementary components are points or closed disks. We trace the history of the conjecture, outline recent developments, and examine the associated uniqueness problem. We also discuss variants, with particular emphasis on the question whether a compact set can be mapped by a quasiconformal self-map of the plane to a \textit{Schottky set}---that is, a set in the plane whose complement is the union of disjoint open disks.
	\end{abstract}

\maketitle
\tableofcontents

\section{Introduction}
The object of this survey is to present the history and recent developments of uniformization problems in the complex plane. The term ``uniformization" originates from work of Koebe \cites{Koebe:uniformization1, Koebe:uniformization2, Koebe:uniformization3}, where the \textit{classical uniformization theorem}\index{classical uniformization theorem} is proved: every simply connected Riemann surface is conformally equivalent to the Riemann sphere, the complex plane, or the unit disk. The term also appears in another work of Koebe \cites{Koebe:Kreisnormierungsproblem, Koebe:FiniteUniformization} in relation with the \textit{Kreisnormierungsproblem}\index{Kreisnormierungsproblem} or else \textit{Koebe's conjecture}\index{Koebe conjecture}, which asserts that every domain in the Riemann sphere is conformally equivalent to a circle domain. We discuss this problem in detail in Section \ref{section:conformal_uniformization}. Therefore, according to Koebe, ``uniformization" refers to the conformal transformation of a surface or domain to a canonical object.

More generally, we may use other types of maps beyond the class of conformal maps. We are led to the following general problem for subsets of the plane or the sphere, which is the central object of the present work.
\begin{problem}\label{problem:general}
Can we transform a subset of the plane or the sphere into a \textbf{canonical set} with better geometric properties in a \textbf{controlled way}, without excessive distortion?
\end{problem}
Informally, the problem asks if a set can be transformed, as if it were made out of rubber, into a well-understood smoother set. For example, a \textit{canonical set} could be a disk, a circle, a line, an annulus, etc. 

One instance of such a result is the \textit{Riemann mapping theorem}, which asserts that every simply connected domain that is a proper subset of the complex plane can be transformed to the unit disk with a conformal map. Among its many important consequences, this classical theorem allows one to define harmonic measure on an arbitrary simply connected domain. Therefore, the existence of well-behaved transformations as in Problem \ref{problem:general} enables us to study the original set using tools that are available in the canonical, smoother set.

In Problem \ref{problem:general} we need to specify what it means to transform a set in a \textit{controlled way}. This essentially amounts to specifying the type of map used in the transformation. The most appropriate class of maps depends on the set under consideration. For instance, one could use bi-Lipschitz maps, which quasi-preserve lengths, or conformal maps, which preserve angles. However, these maps are not suitable for transforming a non-smooth fractal set, possibly of infinite length, into a smoother set. For this purpose, we need to switch to more general classes of maps. For example, we might use quasiconformal maps, which distort angles in a controlled manner, or quasisymmetric maps, which distort shapes in a controlled manner. More details about the latter two classes are presented in the following sections. 

We study Problem \ref{problem:general} in two general directions. In Section \ref{section:conformal_uniformization} we consider the problem of \textit{conformally} transforming a domain in the plane to a canonical domain, such as a disk, an annulus, or more generally a circle domain. This problem is known as Koebe's conjecture, as mentioned earlier. We present several partial results towards the conjecture in historical order. We also discuss the uniqueness of such conformal transformations, a problem that is related to the \textit{conformal rigidity} of circle domains and to the \textit{rigidity conjecture} of He and Schramm. Finally, we discuss potential approaches towards the general case of Koebe's conjecture.

In Section \ref{section:quasiconformal_uniformization} we present the problem of \textit{quasiconformally} transforming a set in the plane to a canonical set, such as a circle, a disk, a line, an annulus, or more generally a Schottky set, that is, a set whose complementary components are disks. We begin from the classical theorem of Ahlfors, which geometrically characterizes sets that are quasiconformally equivalent to circles, and we conclude with a recent result of the author that characterizes sets quasiconformally equivalent to Schottky sets. 

A fundamental difference exists between the two types of problems discussed in Sections \ref{section:conformal_uniformization} and \ref{section:quasiconformal_uniformization}. First, in the case of conformal uniformization (Section \ref{section:conformal_uniformization}), we note that a simply connected domain may possess a highly irregular or even fractal boundary. Nevertheless, the Riemann mapping theorem guarantees the existence of a conformal map from such a domain onto the unit disk, whose boundary is smooth. Therefore, in this setting the focus is on maps with nice behavior in the interior of a domain, but without imposing any control near the boundary.

On the other hand, in the case of quasiconformal uniformization (Section \ref{section:quasiconformal_uniformization}), we use exclusively quasiconformal maps defined in the entire plane or sphere. As a consequence, these maps distort the geometry in a controlled fashion globally. In particular, not every simply connected domain can be mapped to the unit disk with a quasiconformal self-map of the plane. A necessary and sufficient condition, expressed in terms of the geometry of the boundary, was established in a groundbreaking work of Ahlfors, which we present below.

Finally, we remark that uniformization by circle domains or Schottky sets is of interest to several areas beyond classical complex analysis, including the theory of hyperbolic surfaces, geometric group theory, complex dynamics, analysis on metric spaces, and probability theory. We discuss some of these connections in the next sections. 

\subsection*{Acknowledgments}
The author would like to thank Kai Rajala, Matthew Romney, and Malik Younsi for their comments and corrections, which improved the presentation.

\section{Conformal uniformization by circle domains}\label{section:conformal_uniformization}

\begin{figure}
	\centering
	\includegraphics[scale=1]{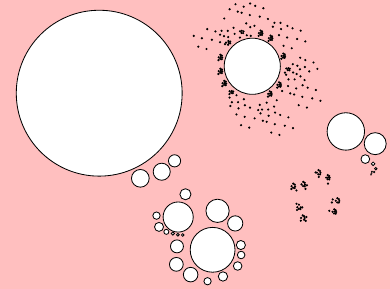}
	\caption{A circle domain. Its boundary can have isolated circles, circles converging to points, isolated points, points converging to circles, Cantor sets, etc.}\label{fig:circle_domain}
\end{figure}

\subsection{Koebe conjecture}\label{section:koebe}
The Riemann sphere\index{Riemann sphere} is denoted by $\widehat \C=\C\cup \{\infty\}$. A \textit{domain}\index{domain} in $\widehat \C$ is a connected open set. A domain $\Omega$ in the Riemann sphere $\widehat{\C}$ is called a \textit{circle domain}\index{circle domain}\index{domain!circle} if the connected components of $\partial \Omega$ are points and circles; see Figure \ref{fig:circle_domain}. The simplest instances of circle domains are the unit disk $\D=\{z\in \C: |z|<1\}$ and the entire plane $\C$, whose boundary consists of a single point at $\infty$.

We say that two domains $U,V\subset \widehat \C$ are \textit{conformally equivalent}\index{conformal equivalence} if there exists a conformal map $f$ from $U$ onto $V$.  Koebe posed the following deep problem, known as Koebe's conjecture or as the Kreisnormierungsproblem \cite{Koebe:Kreisnormierungsproblem}.

\begin{conjecture}[Koebe conjecture, 1908]\label{conjecture:koebe}\index{Koebe conjecture}\index{conjecture!Koebe}\index{Kreisnormierungsproblem}
Every domain in the Riemann sphere is conformally equivalent to a circle domain.
\end{conjecture}

The conjecture has an affirmative answer in the case of simply connected domains, a result known as the Riemann mapping theorem.

\begin{theorem}[Riemann mapping theorem, 1851]\index{Riemann mapping theorem}\label{theorem:riemann}
Every simply connected domain $\Omega\subsetneq \C$ is conformally equivalent to $\D$. Moreover, for any two conformal maps $f,g$ from $\Omega$ onto $\D$ the composition $f\circ g^{-1}$ is a M\"obius transformation.
\end{theorem}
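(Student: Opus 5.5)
The plan follows the classical Koebe--Fejér--Riesz argument via normal families. For existence, I fix $\Omega\subsetneq\C$ simply connected and a point $z_0\in\Omega$. Consider the family
\[
\mathcal F=\{f\colon \Omega\to\D \text{ injective holomorphic},\ f(z_0)=0\}.
\]

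\textbf{Step 1: $\mathcal F$ is nonempty.} Pick $a\in\C\setminus\Omega$. Since $\Omega$ is simply connected and $z-a$ has no zeros in $\Omega$, there is a holomorphic branch $h(z)=\sqrt{z-a}$ on $\Omega$. This $h$ is injective, and $h(\Omega)\cap(-h(\Omega))=\emptyset$. By the open mapping theorem, $h(\Omega)$ contains a disk $B(w_0,r)$, so $h(\Omega)$ omits $B(-w_0,r)$. Then $z\mapsto r/(h(z)+w_0)$ maps $\Omega$ injectively into $\D$, and a disk automorphism normalizes it so that $z_0\mapsto 0$.

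\textbf{Step 2: maximize $|f'(z_0)|$ over $\mathcal F$.} The family $\mathcal F$ is uniformly bounded, so it is normal by Montel's theorem. Let $s=\sup_{\mathcal F}|f'(z_0)|$; it is finite by the Cauchy estimates and positive by Step 1. Take $f_n\in\mathcal F$ with $|f_n'(z_0)|\to s$, and pass to a locally uniform limit $f$. Then $|f'(z_0)|=s>0$, so $f$ is nonconstant. Hurwitz's theorem shows that $f$ is injective, and the maximum principle gives $f(\Omega)\subset\D$. Hence $f\in\mathcal F$.

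\textbf{Step 3: $f$ is onto.} This is the key step. Suppose instead that $b\in\D\setminus f(\Omega)$. Let $\phi_b(w)=(w-b)/(1-\bar b w)$. Then $\phi_b\circ f$ is nonvanishing on the simply connected set $\Omega$, so it has a holomorphic square root $g$, which is injective and maps into $\D$. Set $G=\phi_{g(z_0)}\circ g\in\mathcal F$. We can write $f=\Psi\circ G$, where
\[
\Psi=\phi_b^{-1}\circ q\circ \phi_{g(z_0)}^{-1},\qquad q(w)=w^2 .
\]
The map $\Psi$ sends $\D$ into $\D$, fixes $0$, and is not an automorphism, since it is two-to-one. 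The Schwarz lemma therefore gives $|\Psi'(0)|<1$. Consequently $|G'(z_0)|=|f'(z_0)|/|\Psi'(0)|>s$, which contradicts maximality. So $f$ is onto $\D$.

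\textbf{Uniqueness.} Let $f,g$ be conformal maps of $\Omega$ onto $\D$. Then $\phi=f\circ g^{-1}$ is a conformal automorphism of $\D$. Composing with the disk Möbius map sending $\phi(0)$ to $0$ gives an automorphism fixing $0$. Applying the Schwarz lemma to it and to its inverse yields $|\psi'(0)|=1$, so $\psi$ is a rotation. Therefore $\phi$ is a Möbius transformation.

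I expect the main obstacle to be Step 3, namely constructing the square-root map with strictly larger derivative. The remaining steps are standard normal-family bookkeeping. The one point needing care there is checking that the limit in Step 2 is nonconstant, which is exactly why the supremum is taken over $|f'(z_0)|$.
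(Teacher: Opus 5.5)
The paper does not prove this statement. It quotes the Riemann mapping theorem as classical background, the base case of Koebe's conjecture, so there is no proof of its own to compare yours against.

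Your argument is the standard Fej\'er--Riesz normal-families proof with Koebe's square-root trick, and it is correct and complete.
\begin{itemize}
\item In Step 1, $h$ is injective, and $h(\Omega)\cap(-h(\Omega))=\emptyset$ because $h(z_1)=-h(z_2)$ forces $z_1=z_2$ and then $h(z_1)=0$, i.e.\ $z_1=a$.
\item In Step 2, $s<\infty$ by the Cauchy estimate $|f'(z_0)|\le 1/\dist(z_0,\partial\Omega)$. The supremum is attained by Montel, Weierstrass and Hurwitz.
\item In Step 3, the factorization $f=\Psi\circ G$ is right, with $\Psi(0)=\phi_b^{-1}(g(z_0)^2)=f(z_0)=0$. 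Also $\Psi'(0)\neq0$ automatically, because $f'(z_0)\neq 0$.
\end{itemize}

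Two cosmetic points.
\begin{itemize}
\item In Step 1 you need the strict inequality $|h(z)+w_0|>r$ to land in the open disk. It holds because the open set $h(\Omega)$ cannot meet the circle $|w+w_0|=r$ without also meeting $B(-w_0,r)$.
\item In the uniqueness part, name $\psi=\phi_{\phi(0)}\circ\phi$ explicitly before applying the Schwarz lemma to $\psi$ and $\psi^{-1}$.
\end{itemize}

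Your route obtains the uniformizing map as the solution of an extremal problem over a normal family. This is the same philosophy the survey describes for slit domains (Theorem~\ref{theorem:slit}) and for the extremal approaches of Schramm and Bonk to circle and square domains. Riemann's original argument instead relied on the Dirichlet principle.
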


Observe that M\"obius transformations\index{M\"obius transformation}\index{map!M\"obius} of $\widehat \C$, namely functions of the form 
$$f(z)=\frac{az+b}{cz+d},$$ map circles to circles and points to points. Thus, they map circle domains to circle domains. The next result generalizes the Riemann mapping theorem to the setting of finitely connected domains, a result due to Koebe \cite{Koebe:FiniteUniformization}. See Figure \ref{fig:koebe_finite}.

\begin{theorem}[Koebe uniformization theorem, 1920]\index{Koebe uniformization theorem}\label{theorem:koebe:uniformization}
Every finitely connected domain $\Omega\subset \widehat \C$ is conformally equivalent to a circle domain. Moreover, for any two conformal maps $f,g$ from $\Omega$ onto circle domains the composition $f\circ g^{-1}$ is a M\"obius transformation.
\end{theorem}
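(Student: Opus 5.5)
The plan has two parts: existence of a uniformizing map onto a circle domain, and uniqueness up to Möbius transformations.

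\emph{Existence.} First reduce to the case where every complementary component of $\Omega$ is nondegenerate. Point components are removable for conformal maps, and they are already allowed in circle domains. So we may delete them, uniformize the enlarged domain, and then delete their images at the end. Let $\Omega$ now have $n$ nondegenerate complementary components $E_1,\dots,E_n$, and normalize so that $\infty\in\Omega$. Applying the Riemann mapping theorem (Theorem \ref{theorem:riemann}) to $\widehat\C\setminus E_j$ one component at a time, we may further assume that each $E_j$ is a closed Jordan region with analytic boundary.

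I see two routes from here. The first is the continuity method. Let $\mathcal{C}_n$ be the space of circle domains with $n$ boundary circles, normalized at $\infty$, and let $\mathcal{D}_n$ be the space of conformal equivalence classes of $n$-connected domains. Both are manifolds of real dimension $3n-6$ (for $n\ge 3$), and $\Phi\colon \mathcal{C}_n\to\mathcal{D}_n$ is the natural map. One shows that $\Phi$ is injective (this is the uniqueness part), continuous, and proper; properness uses compactness of normalized conformal maps together with nondegeneration of circles. Since $\mathcal{D}_n$ is connected, invariance of domain then gives surjectivity.

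The second route, which I would actually pursue, is Koebe's iteration. Let $\Omega_0=\Omega$. At step $k$, choose a component index $j\equiv k \pmod n$ and let $\phi_k$ be the Riemann map of the simply connected domain $\widehat\C\setminus E^{(k)}_j$ onto $\widehat\C\setminus\overline{\D_j}$ for a suitable disk $\D_j$, normalized by $\phi_k(z)=z+O(1/z)$ at $\infty$. Set $\Omega_{k+1}=\phi_k(\Omega_k)$. The compositions $f_k=\phi_k\circ\cdots\circ\phi_0$ form a normal family by the Koebe distortion theorem for the normalization at $\infty$. The main obstacle is to show that the boundary components of $\Omega_k$ become uniformly round, so that any limit $f$ maps onto a circle domain. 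For this I would use Grötzsch-type extremal length (modulus) estimates: the moduli of the ring domains separating the components stay bounded below, which gives a uniform separation. Combined with distortion estimates for $\phi_k$ near a component that is already nearly circular, this yields geometric decay of the deviation from roundness. Boundary correspondence and Carathéodory kernel convergence then show that $f$ is a conformal map onto a domain whose complementary components are disks.

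\emph{Uniqueness.} Let $h=f\circ g^{-1}$ be a conformal map between circle domains $D$ and $D'$; the aim is to show that $h$ is Möbius. Because all boundary components are circles (or points, which are removable), $h$ extends to a homeomorphism of the closures. We reflect across the boundary circles, using the Schwarz reflection principle, to extend $h$ to the domains of the Schottky group $\Gamma$ generated by the reflections. The extension is conformal off the limit set $\Lambda$. Since $\Omega$ is finitely connected, $\Lambda$ is a Cantor set of measure zero: the area of the $m$-th generation disks decays geometrically. Hence the extension is a homeomorphism of $\widehat\C$ that is conformal off a set of measure zero. To conclude, I would argue in one of two ways. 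One option is to show that the extension is quasiconformal, since it is $1$-quasiconformal off $\Lambda$ and it is a homeomorphism. The other is to use directly the area-estimate argument: $h'$ (normalized so that $h(z)=z+O(1/z)$) satisfies $\int|h'|^2\le$ area, and summing over the generations shows that $h-\mathrm{id}$ has vanishing Dirichlet energy. Either way $h$ is conformal on $\widehat\C$, hence Möbius.
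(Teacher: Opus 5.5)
The paper does not prove this theorem. It refers to the textbook treatments (Conway, Goluzin), where existence is obtained by the continuity method with invariance of domain, i.e.\ your Route~1. Your uniqueness argument is sound in its second form. The existence route you say you would actually follow, however, has a genuine gap.

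\textbf{Existence.} In Route~2 the whole difficulty sits in the sentence asserting geometric decay of the deviation from roundness, and the mechanism you offer does not deliver it.

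First, distortion estimates for $\phi_k$ near an almost circular component apply only once \emph{all} components are already almost round, which is what has to be proved. Nothing in the sketch forces the defect to become small starting from arbitrary shapes.

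Second, even near round configurations, pairwise bounds do not give a contraction. A lower bound on the moduli is scale invariant, so it permits a component of diameter $r\ll 1$ at distance comparable to $r$ from a component of diameter $1$. Suppose the large component has a defect of size $\epsilon$ next to the small one. Rounding it moves points near the small component by amounts of order $\epsilon$ that vary on scale $r$, so distortion bounds only give a relative defect of order $\epsilon/r$ there. Rounding the small component then feeds a perturbation of order $\epsilon$ back into the large one. This bookkeeping yields cycle factors of order one, and summing over the other $n-1$ components only makes them larger. You have not identified any quantity that decreases by a fixed factor. Koebe's iteration does converge, but that is a theorem needing considerably finer information, so as written the existence half is missing its core.

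Your tools suffice for Route~1 if you make it concrete rather than working with the abstract space $\mathcal D_n$ of conformal classes. Unlabeled, $\mathcal D_n$ has orbifold points coming from symmetric domains, where invariance of domain cannot be applied.
\begin{itemize}
\item Label the boundary components.
\item Send a circle domain containing $\infty$ to its horizontal slit image under the map of Theorem~\ref{theorem:slit} normalized by $z+O(1/z)$, which is unique up to translation.
\item Regard both spaces modulo translations as connected open subsets of $\R^{3n-2}$ (centers and radii, respectively left endpoints and lengths).
\end{itemize}
This map is continuous by kernel convergence. It is injective by the uniqueness part: a composite map between two circle domains with the same slit image has the form $z+c+O(1/z)$, hence is a translation. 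It is proper because extremal distances between boundary components are conformal invariants; together with the normalization at $\infty$, they keep circles from shrinking to points, touching, or running off. Invariance of domain and connectedness then give surjectivity.

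\textbf{Uniqueness.} Your first option is not valid as justified. A homeomorphism of $\widehat\C$ that is conformal off a compact set of zero area need not be quasiconformal: the Sierpi\'nski gasket has zero area and is not conformally removable. That option would therefore require removability of $\Lambda$.

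The second option is the classical argument and works; in effect it proves that removability here. To make ``summing over the generations'' precise, let $g=h-\id$, and let $D_m$ be the complement of the closed generation-$m$ disks; $g$ is analytic near $\overline{D_m}\setminus\{\infty\}$. For each such disk $B$, let $B'$ be the generation-$m$ disk bounded by $h(\partial B)$. By Green's formula, with no contribution from $\infty$ since $g=O(1/z)$,
$$\int_{D_m}|g'|^2\,dA\le\sum_B\frac12\Bigl|\oint_{\partial B}\bar g\,dg\Bigr|.$$
Since $\oint_{\partial B}dg=0$, you may replace $\bar g$ by $\overline{g-c}$ for any constant $c$. Taking $c=g(z_B)$ with $z_B\in\partial B$ bounds each term by $2\pi(r_B+r_{B'})^2$. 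Hence
$$\int_D|g'|^2\,dA\le\int_{D_m}|g'|^2\,dA\le 4(A_m+A_m'),$$
where $A_m$ and $A_m'$ are the total areas of the generation-$m$ disks of the two circle domains. These tend to $0$ by the geometric decay you noted. So $g$ is constant, and $g(\infty)=0$ gives $h=\id$.
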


\begin{figure}
	\centering
	\begin{tikzpicture}
	\node at (0,0) {\includegraphics[scale=0.3]{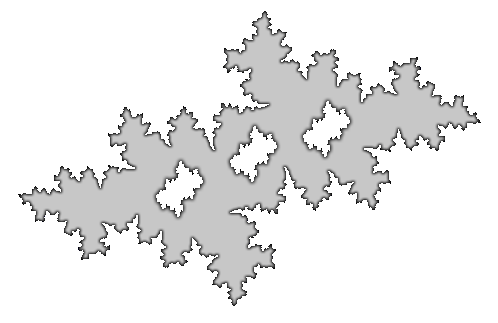}};
	\draw[->] (2.3,0)--(3.3,0);
	\draw[fill=black!20] (5,0) circle (1cm);
	\draw[fill=white] (5.4,0.4) circle (0.2cm);
	\draw[fill=white] (4.8,0.35) circle (0.3cm);
	\draw[fill=white] (5,-0.4) circle (0.35cm);
\end{tikzpicture}
	\caption{Illustration of Koebe's uniformization theorem for finitely connected domains. Note that a homeomorphism between domains preserves the number of boundary components.}\label{fig:koebe_finite}
\end{figure}

Proofs can be found in \cite{Conway:complex2}*{Theorem 15.7.9} and in \cite{Goluzin:complex}*{Section V.6}.

The conjecture was subsequently proved for certain symmetric domains by Koebe \cite{Koebe:symmetric} and for domains whose ``limit boundary components" satisfy certain conditions by Denneberg \cite{Denneberg:konforme}, Gr\"otzsch \cite{Grotzsch:koebe}, Sario \cite{Sario:koebe}, Meschowski \cites{Meschowski:koebe1, Meschowski:koebe2}, Strebel \cites{Strebel:koebe1, Strebel:koebe2}, Bers \cite{Bers:uniformization}, Sibner \cite{Sibner:KoebeQC}, Haas \cite{Haas:circle_domains} and others. 

The most significant result towards the conjecture so far was established in a seminal work of He and Schramm \cite{HeSchramm:Uniformization}. 

\begin{theorem}[He--Schramm uniformization theorem, 1993]\index{He--Schramm uniformization theorem}\label{theorem:he_schramm}
Every countably connected domain $\Omega\subset \widehat \C$ is conformally equivalent to a circle domain. Moreover, for any two conformal maps $f,g$ from $\Omega$ onto circle domains the composition $f\circ g^{-1}$ is a M\"obius transformation.
\end{theorem}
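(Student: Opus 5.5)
Existence comes first, by exhaustion. I will write $\Omega=\widehat\C\setminus K$, with $K$ compact and having countably many components $\{K_i\}_{i\in\N}$. After a M\"obius map I may assume $\infty\in\Omega$. Let $\Omega_n$ be the finitely connected domain $\widehat\C\setminus (K_1\cup\dots\cup K_n)$; more precisely, I fatten the remaining components so that $\Omega_n$ is an increasing exhaustion of $\Omega$. By Theorem \ref{theorem:koebe:uniformization}, there is a conformal map $f_n\colon\Omega_n\to D_n$ onto a circle domain, normalized by $f_n(z)=z+O(1/z)$ at $\infty$. Koebe distortion (area theorem) makes $\{f_n\}$ a normal family, so a subsequence converges locally uniformly to a conformal map $f\colon\Omega\to D$ with the same normalization.

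The point is to show that $D$ is a circle domain. Each limit boundary component of $D$ is a Hausdorff limit of circles or points, hence a circle or a point. The danger is that distinct components of $K$ could merge in the limit, or that one component could split. I would rule this out using modulus/extremal length of curve families separating components: conformal invariance keeps those moduli bounded below uniformly in $n$. Countability enters through a transfinite induction on the Cantor--Bendixson derivatives of the space of boundary components, which is a countable compact set and therefore scattered. At each stage the isolated components are handled by the finite case. Limit components are treated by showing that the limit map sends each $K_i$ to a single point or disk.

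Uniqueness will be proved as conformal rigidity of countably connected circle domains. Let $h=f\circ g^{-1}\colon D\to D'$ be conformal between circle domains. I extend $h$ by repeated Schwarz reflection across all circles to the limit set, producing a map between the complements of the associated Schottky-group-type limit sets. The claim is then that a countable limit boundary plus the circle structure forces $h$ to extend to a homeomorphism of $\widehat\C$ that is conformal off a countable set. I would use the He--Schramm transboundary extremal length argument. For curve families the countable set of point components is negligible, and disk components are treated as single vertices. This shows $h$ is quasiconformal with dilatation 1 a.e., so $h$ is M\"obius by Weyl's lemma. The alternative route is to show that countable closed sets are conformally removable for homeomorphisms.

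I expect the main obstacle to be the non-degeneration step in existence: preventing a point component from opening into a disk of positive radius, or disks from collapsing or merging, in the limit. Here I would run the transfinite induction with a fixed-point/continuity argument in the space of circle configurations, in the spirit of He--Schramm. The key estimate is that a point component of $\partial\Omega$ cannot acquire positive radius, because a countable compact set has zero transboundary extremal-length capacity.
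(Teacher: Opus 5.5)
The paper itself does not prove this theorem; it cites \cite{HeSchramm:Uniformization}, \cite{Schramm:transboundary}, \cite{Rajala:koebe}. Measured against that discussion, your proposal has genuine gaps in both halves.

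\textbf{Existence.} The claim that each complementary component of the Carath\'eodory limit $D$ is a Hausdorff limit of circles or points is false. A component can be formed by many small disks of $D_n$ piling up; this is exactly the phenomenon of Figure~\ref{fig:caratheodory}. The survey stresses that this can happen for external approximations and for exhaustions alike, unless these are chosen carefully, as in Rajala's proof. An arbitrary fattening of the remaining components gives no such control. Note also that $\widehat\C\setminus(K_1\cup\dots\cup K_n)$ decreases to $\Omega$, so it is an external approximation, not an exhaustion; you are mixing two different schemes.

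Your modulus bounds aim at the wrong danger. Fixed components $K_i\neq K_j$ can never merge: a Jordan curve in $\Omega$ separating them is carried by $f_n\to f$ to a curve in $D$ separating the limits. The real danger is different. The images of components $K_{m}$ with $m=m(n)\to\infty$, which accumulate on a non-isolated $K_i$, can pile up into a non-round continuum attached to the limit of the component $B_i^n$ of $\widehat\C\setminus D_n$ corresponding to $K_i$. Ordinary extremal length in $D_n$ does not see the shape of complementary components at all, which is why Schramm needed transboundary modulus.

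Your induction does handle isolated components correctly: an annulus in $\Omega$ around $K_i$ forces the limit component to equal $\lim B_i^n$. For limit components, however, you only restate the goal, and ``a fixed-point/continuity argument'' is not an argument. Your ``key estimate'' also targets a non-issue: a point component of $\partial\Omega$ corresponding to a disk would not stop $D$ from being a circle domain. The missing piece is an actual mechanism forcing limit components to be round.

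\textbf{Uniqueness.} The reflection extension of $h$ is conformal only off the limit set $\Lambda$ of the group generated by the reflections $R_i$ in the boundary circles, and $\Lambda$ is not countable. As soon as $\partial D$ contains three circles, $\Lambda$ contains the Cantor set of points lying in nested disks $B_{i_1}\supset R_{i_1}(B_{i_2})\supset R_{i_1}R_{i_2}(B_{i_3})\supset\cdots$ with $i_k\neq i_{k+1}$, essentially a classical Schottky limit set. So removability of countable closed sets does not apply.

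Reflecting also presupposes that $h$ extends to a homeomorphism $\bar D\to\bar{D'}$ taking circles onto circles and points to points. You do not address this, and it is nontrivial when a circle is accumulated by other boundary components.

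Your transboundary alternative would require proving that the extended map lies in $W^{1,2}$ across $\Lambda$ and that $\Lambda$ has zero area. Neither follows from negligibility of the countably many point components. A correct route within the survey is to note that $\partial D$ is a countable union of circles and points, hence has $\sigma$-finite length, and invoke Theorem~\ref{theorem:he_schramm_rigid}. As far as I recall, the original proof in \cite{HeSchramm:Uniformization} instead uses a fixed-point index argument.
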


Alternative proofs of the theorem have been obtained by Schramm \cite{Schramm:transboundary} and Rajala \cite{Rajala:koebe}. Remarkably, in \cite{Schramm:transboundary} Schramm introduces the transboundary modulus\index{transboundary modulus}, a notion that has been proved to be of paramount importance for recent developments in uniformization and rigidity problems in the complex plane. 

We observe that in the above theorems, which establish special cases of Koebe's conjecture, we also have a uniqueness statement. Namely, up to postcomposition with M\"obius transformations, there exists a unique conformal map from $\Omega$ onto a circle domain. However, in the case of uncountably connected domains, uniqueness can fail. We discuss this phenomenon in Section \ref{section:rigidity}.

\subsection{Uniformization by other types of domains}

There are several results in the spirit of Koebe's conjecture that provide conformal transformation of a domain into special types of domains, beyond circle domains. We briefly discuss some of them.

A domain $\Omega\subset \widehat \C$ is a \textit{slit domain}\index{slit domain}\index{domain!slit} if it contains $\infty$ and the complementary components of $\Omega$ are points or rectilinear slits in a direction specified by an angle $\theta\in [0,\pi)$. The following classical result was proved by Hilbert \cite{Hilbert:slit} for finitely connected domains and generalized to infinitely connected domains by Gr\"otzsch \cite{Grotzsch:slit}. Proofs can be found in \cite{Courant:Dirichlet}*{Section II.3} and \cite{Goluzin:complex}*{Theorem V.2.1}.

\begin{theorem}\label{theorem:slit}
Every domain in the Riemann sphere is conformally equivalent to a slit domain. 
\end{theorem}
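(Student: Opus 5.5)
The plan is to follow the classical extremal-problem approach of Hilbert, Koebe and Gr\"otzsch. I would first treat finitely connected domains, or more generally domains with a smooth exhaustion, and then pass to arbitrary domains by a normal-families limit. After a M\"obius transformation we may assume $\infty\in\Omega$. Rotating by $e^{-i\theta}$ reduces everything to horizontal slits, $\theta=0$.

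\emph{Step 1: the extremal problem.} Consider the class $\mathcal S(\Omega)$ of univalent maps $f\colon\Omega\to\widehat\C$ with the hydrodynamic normalization
\begin{align*}
f(z)=z+\frac{a_1(f)}{z}+O(|z|^{-2})\quad\text{as } z\to\infty.
\end{align*}
This family is nonempty (it contains the identity). By Koebe distortion applied on a neighborhood of $\infty$, it is normal, and its locally uniform limits are again univalent with the same normalization. Hence $\operatorname{Re} a_1(f)$ is bounded on $\mathcal S(\Omega)$ and attains its maximum at some $f_0$. I would show that $D=f_0(\Omega)$ is a horizontal slit domain.

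\emph{Step 2: variational argument.} Suppose some complementary component $E$ of $D$ is not a point or a horizontal segment. Let $D'$ be the component of $\widehat\C\setminus E$ containing $\infty$; it is simply connected and contains $D$. Map $D'$ conformally, with hydrodynamic normalization $g(w)=w+b_1/w+\dots$, onto the complement of a horizontal segment. Such a map exists by the Riemann mapping theorem (Theorem~\ref{theorem:riemann}) composed with a Joukowski map. Then $g\circ f_0\in\mathcal S(\Omega)$ and $a_1(g\circ f_0)=a_1(f_0)+b_1$.

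It remains to show $\operatorname{Re} b_1>0$ unless $E$ is already a point or horizontal segment. This is the key inequality, and I expect it to be the main obstacle. It amounts to the classical area/Grunsky estimate for a univalent map of the exterior of a continuum to the exterior of a horizontal slit. I would prove it by Green's theorem, applying the area principle to $h=g^{-1}$ on the slit complement. Write $\zeta=g(w)$, so $h(\zeta)=\zeta-b_1/\zeta+\dots$. Compute $\operatorname{Re}\int i\,\bar{h}\,dh$, or better $\int \operatorname{Im}(h)\,d\operatorname{Re}(h)$-type integrals over large circles and over a contour collapsing onto the slit. The horizontality of the slit makes the boundary term from the slit vanish. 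The remaining term equals the nonnegative area of $E$, plus a Dirichlet integral of $\operatorname{Im}(h-\zeta)$, which vanishes only if $h$ is the identity. This yields $\operatorname{Re} b_1\ge0$, with equality only if $E$ is already a horizontal segment or a point, contradicting maximality. Thus every complementary component of $D$ is a point or a horizontal slit.

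\emph{Step 3 (if needed).} If one prefers to run the variation only for finitely connected domains, exhaust $\Omega$ by finitely connected smooth domains $\Omega_n\ni\infty$. Obtain slit maps $f_n$ normalized at $\infty$, extract a locally uniform limit $f$ by normality, and check that the limit of slit domains is a slit domain. This holds because the complementary components of the kernel are limits in the Hausdorff sense of unions of horizontal segments, hence horizontal segments or points. This last point needs care (connectedness of components of the limit complement) and is a second, milder obstacle. The direct extremal argument in Steps 1--2 avoids it, so that is the route I would take.
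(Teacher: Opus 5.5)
Your proposal is correct and is essentially the classical extremal proof that the paper cites for this theorem (Goluzin, Theorem V.2.1; the paper notes that the slit map arises as the unique solution of an extremal problem). You maximize $\mathrm{Re}\,a_1$ over hydrodynamically normalized univalent maps and exclude non-slit complementary components by the variation $g\circ f_0$, with key inequality $2\pi\,\mathrm{Re}\,b_1=\area(E)+\iint_{\C\setminus S}|h'-1|^2\,dA\ge 0$. One detail needs care: since $h$ need not extend continuously to $S$ and $\partial E$ need not be rectifiable, run the Green's-theorem computation on contours shrinking to the slit $S$ (on which $h-\zeta$ stays bounded) rather than on $S$ itself.
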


Therefore, the analogue of Koebe's conjecture is true in this case. We note that the conformal map onto a slit domain is not unique. However, the conformal map provided in the proof of Theorem \ref{theorem:slit} arises as the unique solution of a certain extremal problem. This fact motivates attempts to prove Koebe's conjecture by constructing and solving certain extremal problems, which automatically give rise to the desired conformal map. One such problem is considered by Schramm \cite{Schramm:transboundary_long} and provides an alternative proof of the conjecture in the countably connected case.

A very general uniformization result for finitely connected domains was proved by  Brandt--Harrington \cites{Brandt:conformal, Harrington:conformal}. This result allows one to prescribe all non-degenerate complementary components of the target domain up to homothety\index{homothety}\index{homothetic}, i.e., a map of the form $z\mapsto az+b$, where $a>0$ and $b\in \C$. The formulation given below is taken from \cite{Schramm:transboundary}*{Theorem 4.1}.

\begin{theorem}[\cites{Brandt:conformal, Harrington:conformal}]\label{theorem:brandt_harrington}
Let $\Omega\subset \widehat \C$ be a finitely connected domain. For each complementary component $b$ of $\Omega$, let $P_b\subset \C$ be a compact set that contains more than a single point such that $\widehat \C\setminus P_b$ is connected. Then there exists a conformal map $f$ from $\Omega$ onto a domain $D$ such that if $b$ is a non-degenerate complementary component of $\Omega$, then it corresponds under $f$ to a homothetic image of $P_b$.  
\end{theorem}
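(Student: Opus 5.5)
I would prove Theorem \ref{theorem:brandt_harrington} by a continuity (degree-theoretic) argument on the finite-dimensional moduli space, in the spirit of Koebe's original proof of Theorem \ref{theorem:koebe:uniformization}. Point components are left aside: removing finitely many points is harmless, since a conformal map of the domain with the points filled in restricts to the original domain. So assume $\Omega$ has $n$ non-degenerate complementary components $b_1,\dots,b_n$. By Theorem \ref{theorem:koebe:uniformization}, or already by Theorem \ref{theorem:slit}, we may replace $\Omega$ by a conformally equivalent domain with $\infty\in\Omega$.

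\textbf{Step 1: the two spaces and the map.} Let $\mathcal{M}_\Omega$ be the moduli space of $n$-connected domains marked compatibly with $\Omega$. It is a real manifold of dimension $3n-6$ for $n\ge 3$, and the cases $n\le 2$ are handled separately via the Riemann mapping theorem and annulus moduli. Let $\mathcal{P}$ be the space of configurations
\[
D=\widehat{\C}\setminus \bigcup_{j}(a_jP_{b_j}+c_j), \qquad a_j>0,\ c_j\in\C,
\]
with pairwise disjoint pieces, taken modulo the maps $z\mapsto az+c$ with $a>0$, $c\in\C$. This quotient is also $(3n-6)$-dimensional, since there are $3n$ parameters and a $3$-dimensional group. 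However, general Möbius maps do not preserve homothety classes, so the right normalization is subtler. I would instead compare both spaces to $\mathcal{M}_\Omega$ via the forgetful map $\Phi:\mathcal{P}\to\mathcal{M}$ sending $D$ to its conformal class. The theorem amounts to the surjectivity of $\Phi$.

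\textbf{Step 2: injectivity of $\Phi$.} Suppose $f:D\to D'$ is conformal between two configurations in $\mathcal{P}$ and respects the marking. I would show that $f$ is a homothety $z\mapsto az+c$. Normalize $f(z)=z+O(1/z)$ near $\infty$ after composing with an admissible homothety. The complementary components are not smooth, so rather than using $\mathrm{Im}$ or $\mathrm{Re}$ of boundary values, I would argue with area or capacity. Consider $g(z)=f(z)-z$, which is holomorphic and bounded near $\infty$. For each $j$, the component $P_j'=a_j'P_{b_j}+c_j'$ is a homothetic copy of $P_j=a_jP_{b_j}+c_j$. Using the area theorem or logarithmic capacity (which scales linearly under homothety), together with the argument principle applied to $f(z)-\lambda z-\mu$ around each component, I would aim to force $a_j'=a_j$ and a common translation. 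This is Schramm's argument in \cite{Schramm:transboundary} (with transboundary extremal length), and I would follow it.

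\textbf{Step 3: properness and continuity.} Continuity of $\Phi$ is straightforward. Properness means that if configurations degenerate in $\mathcal{P}$ (pieces colliding, or scales going to $0$ or $\infty$ relative to one another), then the conformal classes leave every compact set of $\mathcal{M}$. I would detect this with extremal length of curve families separating or joining components: a collision makes some annulus modulus blow up or vanish. Then $\Phi$ is a continuous injective proper map between connected manifolds of equal dimension. By invariance of domain its image is open and closed, hence everything. Connectedness of $\mathcal{M}$ is classical, and $\mathcal{P}$ is a manifold by construction.

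The main obstacle is Step 2, uniqueness for arbitrary shapes $P_b$ with rough boundaries. Here classical boundary-value arguments fail, and I would rely on a modulus/extremal-length uniqueness argument, checking the boundary correspondence via Carathéodory prime ends of the full components.
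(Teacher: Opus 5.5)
The survey states this theorem without proof; it only cites Brandt, Harrington and Schramm's formulation. So I assess your argument on its own. It has a genuine gap, and the gap is already in Step 1.

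\textbf{The dimension count is wrong, so Steps 2 and 3 fail.} The quotient $\mathcal P$ has dimension $3n-3$, not $3n-6$: there are $3n$ real parameters $(a_j,c_j)$, and the group $\{z\mapsto az+c:\ a>0,\ c\in\C\}$ is $3$-dimensional. So $\Phi\colon\mathcal P\to\mathcal M$ goes from a $(3n-3)$-manifold to a $(3n-6)$-manifold, and the injectivity claimed in Step 2 is false.

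\emph{A concrete counterexample to Step 2.} Take all $P_b=\bar{\D}$, and let $D$ be a configuration whose first two pieces are $\bar{\D}$ and $\bar{B}(4,2)$. The rotation $z\mapsto iz$ is a marked conformal map of $D$ onto the configuration $iD$. The only positive homothety taking $\bar{\D}$ to $\bar{\D}$ is the identity, and it does not take $\bar{B}(4,2)$ to $\bar{B}(4i,2)$. Hence $D\neq iD$ in $\mathcal P$, yet $\Phi(D)=\Phi(iD)$.

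\emph{Further non-injectivity and failure of properness.} For $p\in D\cap\C$, the map $M_p(z)=1/(z-p)$ also sends $D$ to a configuration of compact disks. For segments $P_b$ this is just the non-uniqueness of slit maps that the survey mentions. Consequently your normalization $f(z)=z+O(1/z)$ is unavailable: $f$ need not fix $\infty$, and $f'(\infty)$ need not be positive. The same family breaks Step 3. Let $p$ tend to a boundary circle of $D$. Then one radius of $M_p(D)$ blows up relative to the others, so $M_p(D)$ leaves every compact subset of $\mathcal P$. Meanwhile $\Phi(M_p(D))=\Phi(D)$ stays fixed, so $\Phi$ is not proper. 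Invariance of domain therefore has nothing to act on.

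\textbf{Fixing the normalization is not enough.} To repair the set-up, record in the target the point $z_0=f^{-1}(\infty)$ and the direction of $f$ at $z_0$. That is, send $D$ to the class of $(D,\infty,\text{positive real direction})$ in the moduli space of marked $n$-connected domains with a marked interior point and tangent direction. This space has dimension $3n-3$. Properness then becomes plausible: your extremal-length estimates cover colliding or shrinking pieces, and there is one new degeneration, $z_0\to\partial\Omega$.

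However, injectivity would now mean: a conformal map between two configurations with $f(\infty)=\infty$ and $f'(\infty)>0$ is a positive homothety. For arbitrary compact $P_b$ this is a strong uniqueness theorem. The statement does not claim it, and your sketch (area theorem, capacity, argument principle) does not prove it. The argument-principle and index proofs that work for disks rely on the boundaries of two distinct disks crossing in at most two points. That fails for positive homothets of a non-convex set.

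\textbf{A workable route.} Replace injectivity by degree.
\begin{enumerate}
\item Deform each $P_b$ through admissible shapes to a round disk.
\item At the disk end, Theorem \ref{theorem:koebe:uniformization} (existence plus M\"obius uniqueness) makes the normalized map a homeomorphism, so its mod $2$ degree is $1$.
\item If you prove properness uniformly along the deformation, together with joint continuity via Carath\'eodory kernel convergence, the degree stays $1$. This gives surjectivity for the original shapes.
\end{enumerate}
The uniform properness in step 3 is where the real work lies.
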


This result implies Koebe's uniformization theorem (Theorem \ref{theorem:koebe:uniformization}). Moreover, it implies that each finitely connected domain can be conformally mapped to a \textit{square domain}\index{square domain}\index{domain!square}, i.e., a domain whose non-degenerate complementary components are squares with sides parallel to the coordinate axes. Thanks to a result of Schramm \cite{Schramm:transboundary}*{Theorem 4.2}, Koebe's conjecture is equivalent to the statement that each domain is conformally equivalent to a square domain (or a domain bounded by equilateral triangles, etc.). 

Thus, one may attempt to approach Koebe's conjecture by studying conformal uniformization by square domains. Such domains appear more naturally than circle domains in extremal problems \cites{Schramm:Tiling, Schramm:transboundary_long, Bonk:square, Ntalampekos:CarpetsThesis}. Specifically, the recent work of Bonk \cite{Bonk:square} introduces a simple extremal problem for finitely connected domains whose solution gives a conformal map onto a square domain. This result is generalized by Solynin--Vidanage \cite{SolyninVidanage:rectangular} to domains bounded by rectangles. It would be interesting to extend these results to countably connected domains.

\subsection{Geometric conditions for uniformization}\label{section:geometric}

Koebe's conjecture has been established under some geometric conditions on a domain. We present here an incomplete collection of results in this spirit. 

\subsubsection{Uniform domains}\label{section:uniform}
A domain $\Omega\subset \C$ is called a \textit{uniform domain}\index{uniform domain}\index{domain!uniform} if there exists a constant $A\geq 1$ such that for any two points $z,w\in \Omega$ there exists curve $\gamma\colon [a,b]\to \Omega$ with $\gamma(a)=z$, $\gamma(b)=w$, $\ell(\gamma)\leq A|z-w|$, and 
$$\min\{\ell(\gamma|_{[a,t]}),\ell(\gamma|_{[t,b]})\}\leq A \dist(\gamma(t),\partial \Omega)\,\,\, \text{for every $t\in [a,b]$.}$$
Here we use the Euclidean metric and topology and $\ell(\gamma)$ denotes the length of $\gamma$. Uniform domains were introduced independently by Martio--Sarvas \cite{MartioSarvas:uniform} and Jones \cite{Jones:uniform} and appear nowadays in several problems of geometric function theory. They constitute a well-studied class that enjoys favorable geometric properties, similar to the strong properties of the unit disk \cite{Gehring:uniform_ubiquitous}.

The definition of a uniform domain implies that any two points can be connected by a twisted cone of aperture uniformly bounded from below. Geometrically, uniform domains are not allowed to have any inward or outward cusps. Martio and Sarvas \cite{MartioSarvas:uniform}*{Theorem 2.24} proved that each non-degenerate complementary component of a uniform domain is a quasidisk, an object with very strong geometric properties that resembles the unit disk from several viewpoints; we define and discuss quasidisks in detail in Section \ref{section:quasidisks}.

Koebe's conjecture was established for uniform domains by Herron and Koskela \cite{HerronKoskela:QEDcircledomains}, generalizing an earlier result of Herron \cite{Herron:uniform}. 
\begin{theorem}[\cites{HerronKoskela:QEDcircledomains, NtalampekosYounsi:rigidity}]\label{theorem:herron_koskela}
Every uniform domain $\Omega\subset \C$ is conformally equivalent to a circle domain. Moreover, for any two conformal maps $f,g$ from $\Omega$ onto circle domains the composition $f\circ g^{-1}$ is a M\"obius transformation.
\end{theorem}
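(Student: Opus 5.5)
We split the argument into existence and uniqueness, and treat both by exhaustion combined with quasiconformal control that comes from the uniformity of $\Omega$.

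\emph{Existence.} Exhaust $\Omega$ by finitely connected domains $\Omega_n\subset\Omega$. For instance, fix a Whitney-type decomposition and let $\Omega_n$ be the complement of the union of the $n$ largest complementary components (suitably thickened) together with small neighborhoods of all the remaining components. By Theorem~\ref{theorem:koebe:uniformization}, each $\Omega_n$ admits a conformal map $f_n$ onto a circle domain. We normalize each $f_n$ at three interior points of $\Omega$. Normal-family arguments then give a subsequence converging locally uniformly to a conformal map $f$ on $\Omega$.

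The issue is to show that $f(\Omega)$ is a circle domain. That is, each complementary component of $f(\Omega)$ must be a limit of circles or points, and distinct components of $\widehat\C\setminus\Omega$ must not merge in the limit. For this we use the quasidisk property of Martio--Sarvas. The uniformity constant gives uniform control on the exhausting domains, since the chosen $\Omega_n$ can be taken uniform with a comparable constant. We then show that $f_n$ extends to a $K$-quasiconformal self-map of $\widehat\C$ with $K$ depending only on $A$, by reflecting across the circles. The key input here is that uniform domains are quasiconformal extension domains in the sense of Herron--Koskela: every conformal map of a uniform domain onto a circle domain extends quasiconformally, with a bound depending only on $A$. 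Equicontinuity of uniformly quasiconformal normalized maps then yields that the limit $f$ extends to a global quasiconformal map. Consequently it preserves the separation of components, and circles converge to circles or points.

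\emph{Uniqueness.} Let $h=f\circ g^{-1}\colon g(\Omega)\to f(\Omega)$ be conformal between circle domains. By the existence step, $h$ extends to a quasiconformal homeomorphism of $\widehat\C$, since both $f$ and $g$ do. By construction, circles in $\partial g(\Omega)$ go to circles in $\partial f(\Omega)$. Iterating reflections across the boundary circles (Schottky-group reflection) extends $h$ conformally to the complement of the limit set. The remaining complementary set consists of point components and limit points. We show it has $\sigma$-finite length, or is conformally removable for quasiconformal maps conformal off it. Then Weyl's lemma, applied to the global quasiconformal extension, forces $h$ to be M\"obius.

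\emph{Main obstacle.} The hardest step is this removability claim. Uniformity of $\Omega$ gives the quasicircle boundaries uniform fatness, and hence packing and area estimates. I would try to show that the residual set has zero area and that the quasiconformal extension has zero Beltrami coefficient almost everywhere there. Establishing the latter is exactly where the uniform geometry must be used, via transboundary modulus estimates comparing families of curves through point components.
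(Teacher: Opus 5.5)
\textbf{There is a genuine gap, in the uniqueness step.} You write that $h=f\circ g^{-1}$ extends quasiconformally ``since both $f$ and $g$ do.'' Your existence argument produces only \emph{one} map $f_0$, the normalized limit of the $f_n$, that has a quasiconformal extension. An arbitrary conformal map $g$ of $\Omega$ onto a circle domain is not of that form, and nothing gives it an extension.

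The ``key input'' that every conformal map of a uniform domain onto a circle domain extends quasiconformally is available only where uniqueness is already known, e.g.\ for finitely connected $\Omega$ by Koebe's theorem. For general $\Omega$ it is a \emph{consequence} of the uniqueness you are proving ($g=M\circ f_0$ with $M$ M\"obius). So the argument is circular.

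There is also a conflation of two different maps. Even granting some quasiconformal extension $\tilde H$ of $h$, it is not the Schottky-reflection extension, which is the map that is conformal off the residual set. Hence Weyl's lemma applies to neither map without further work.

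For the same reason, your ``main obstacle'' is misplaced. Smallness of the residual set is the easy part: it has zero area for a uniform circle domain. Your alternative of $\sigma$-finite length fails in general, e.g.\ when the point components form a Cantor set of dimension $>1$. The real difficulty is proving any regularity of the reflection extension of an arbitrary conformal map across the residual set, when nothing is known about the target circle domain.

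The existence outline is sound and follows the Herron--Koskela strategy. It uses $K(A)$-quasiconformal extensions of the Koebe maps of finitely connected uniform approximants, a normalized limit, and the fact that Hausdorff limits of closed disks are closed disks or points. However, you only assert that your \emph{internal} exhaustion is uniform with a comparable constant. The external approximants $\widehat\C\setminus(E_1\cup\dots\cup E_n)$, which keep the $n$ largest complementary components, are easier to control.

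\textbf{How the paper handles uniqueness.} It proves rigidity of one specific circle domain instead of regularity of an arbitrary map. The map $f_0$ from the existence part extends to a quasiconformal self-map of $\widehat\C$, so $D_0=f_0(\Omega)$ is a \emph{uniform} circle domain. Uniform domains satisfy the Jones--Smirnov condition $\int_{D_0}k_{D_0}(z_0,z)^2\,d\Sigma(z)<\infty$. By Theorem~\ref{theorem:ntalampekos_younsi}, $D_0$ is therefore conformally rigid: \emph{every} conformal map of $D_0$ onto a circle domain is M\"obius, with no assumption on the target. Applying this to $f\circ f_0^{-1}$ and $g\circ f_0^{-1}$ shows that $f\circ g^{-1}$ is M\"obius. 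The Ntalampekos--Younsi theorem supplies exactly the analytic content your proposal lacks: control of the reflection extension of an arbitrary conformal map between circle domains across the residual set, using only the $L^2$ quasihyperbolic condition on the source.
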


The uniqueness part of the theorem was established much more recently by the author and Younsi \cite{NtalampekosYounsi:rigidity} and we discuss it in Section \ref{section:rigidity}.

\subsubsection{Cofat domains}\label{section:cofat}
As mentioned above, each non-degenerate complementary component of a uniform domain has nice geometry and in particular it is a quasidisk. Quasidisks satisfy the following strong geometric property. A set $A\subset \widehat \C$ is \textit{fat}\index{fat set} if there exists a constant $\tau>0$ such that for every $z\in A\cap \C$ and for every ball $B(z,r)$ that does not contain $A$ we have
$$\area(A\cap B(z,r))\geq \tau r^2.$$
In this case we say that $A$ is $\tau$-fat. Note that each point is trivially $\tau$-fat for all $\tau>0$. A domain $\Omega\subset \widehat \C$ is \textit{cofat}\index{cofat domain}\index{domain!cofat} if there exists $\tau>0$ such that each connected component of $\widehat \C\setminus \Omega$ is $\tau$-fat. Schramm proved that each quasidisk is fat \cite{Schramm:transboundary}*{Corollary 2.3}. In combination with the above mentioned result of Martio--Sarvas, we obtain that each uniform domain is cofat. Schramm generalized the result of Herron--Koskela and proved Koebe's conjecture for cofat domains \cite{Schramm:transboundary}.

\begin{theorem}[\cite{Schramm:transboundary}]\label{theorem:schramm_cofat}
Every cofat domain $\Omega\subset \widehat \C$ is conformally equivalent to a circle domain.
\end{theorem}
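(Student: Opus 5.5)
The plan is to exhaust $\Omega$ by finitely connected domains, uniformize each by Koebe's uniformization theorem (Theorem \ref{theorem:koebe:uniformization}), and pass to a limit. The whole difficulty is controlling the limit, and fatness together with Schramm's transboundary modulus is what supplies that control.

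\textbf{Setup.} After a Möbius change of coordinates assume $\infty\in\Omega$ and fix three points $z_1,z_2,z_3\in\Omega$. Enumerate a dense sequence of complementary components, or more concretely cover $\widehat\C\setminus\Omega$ at scale $1/n$. For each $n$ let $\Omega_n\supset\Omega$ be the finitely connected domain whose complementary components are finite unions of components of $\widehat\C\setminus\Omega$. These are grouped, for instance, by taking the components of the complement of the union of the $1/n$-neighborhood components, each filled in.

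By Theorem \ref{theorem:koebe:uniformization} there is a conformal map $f_n\colon\Omega_n\to D_n$ onto a circle domain, normalized so that $f_n(z_i)=w_i$ are fixed points. The restrictions $f_n|_\Omega$ are injective, so they form a normal family. Hence a subsequence converges locally uniformly on $\Omega$ to a conformal map $f\colon \Omega\to D$; the three-point normalization prevents degeneration to a constant.

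\textbf{Main obstacle: showing $D$ is a circle domain.} A nondegenerate complementary component $E$ of $\Omega$ lies, for each $n$, in a complementary piece $E_n$ of $\Omega_n$, which corresponds to a disk $B_n$ of $D_n$. In the Hausdorff limit the disks $B_n$ converge to a disk or point $B$ containing the component of $\widehat\C\setminus D$ corresponding to $E$. The danger is that several distinct components of $\widehat\C\setminus\Omega$ merge in the limit, or that $\widehat\C\setminus D$ has a component not a disk, a union of limit disks. To rule this out, one needs a uniform estimate that distinct complementary components separated in $\Omega$ remain separated in $D_n$, independent of $n$.

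Here I would use Schramm's transboundary modulus. For a family of curves in $\widehat\C$ joining two separated complementary continua, the transboundary extremal length in $\Omega_n$ (treating each complementary component as a single ``vertex'' with its own weight) is a conformal invariant. It is bounded below by a positive constant depending only on the configuration in $\Omega$ and $\tau$.

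The key lemma is that for $\tau$-fat complementary components, and for disks (which are uniformly fat), transboundary modulus is comparable to a geometric quantity. The argument: if two disks $B_n,B_n'$ in $D_n$ are nearly touching or degenerate into a connected set with nondisk shape, one constructs an admissible metric of small mass, giving weight to disks proportional to their diameters. Fatness converts the sum of squared diameters into an area bound, $\sum\diam(B)^2\lesssim\tau^{-1}\area$, which gives small modulus. This contradicts the lower bound transferred from $\Omega$, which comes from fatness of the components of $\widehat\C\setminus\Omega$ by the same area estimate used in reverse.

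From this I conclude that $f$ extends to a homeomorphism between the spaces of complementary components (collapsing each component to a point). Each component of $\widehat\C\setminus D$ is then a single Hausdorff limit of disks $B_n$, hence a closed disk or a point, and $D$ is a circle domain. I expect the transboundary modulus comparability, and its stability under the limit $n\to\infty$, to be the hardest step.
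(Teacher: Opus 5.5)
\paragraph{Overall assessment.} Your framework matches the strategy the survey attributes to Schramm: approximate $\Omega$ from outside by finitely connected domains, apply Theorem~\ref{theorem:koebe:uniformization}, normalize, pass to a limit, and control the limit with transboundary modulus and fatness. However, the approximation is set up inconsistently, and the step that makes $D$ a circle domain is asserted rather than proved.

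\paragraph{The approximating domains.} Take $\Omega_n=\widehat\C\setminus(B_1\cup\dots\cup B_n)$, where $\{B_j\}$ is a dense sequence of components of $\widehat\C\setminus\Omega$. A connected finite union of disjoint components is a single component anyway. Your alternative of filled-in $1/n$-neighborhoods does not work:
\begin{itemize}
\item it produces $\Omega_n\subset\Omega$, so $f_n|_\Omega$ is not even defined;
\item its complementary pieces are not uniformly fat (the $1/n$-neighborhood of a Cantor set lying on a segment is a thin sausage), so every estimate that must hold uniformly in $n$ is lost.
\end{itemize}
Note also that a nondegenerate $\tau$-fat set has positive area. Hence $\widehat\C\setminus\Omega$ has only countably many nondegenerate components, all of which can be listed, and every unlisted component is a point.

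\paragraph{The missing step.} Your deduction that $D$ is a circle domain is a non sequitur. Every conformal map between any two domains induces a homeomorphism of the spaces of complementary components. So this fact cannot show that each component of $\widehat\C\setminus D$ is a single Hausdorff limit of disks.

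With external approximation, the limit $\Delta_j$ of the disks $D_n^j$ attached to $B_j$ is \emph{contained in}, not containing, the corresponding component $E_j^*$ of $\widehat\C\setminus D$. The real danger comes in two forms:
\begin{itemize}
\item $E_j^*$ may be strictly larger than $\Delta_j$;
\item a point component $\{p\}$ of $\widehat\C\setminus\Omega$ may correspond to a nondegenerate $E^*$. There may be uncountably many such points, and no disk of any $D_n$ is attached to them. Small disks $D_n^{k}$ with $k=k(n)\to\infty$ can line up as in Figure~\ref{fig:caratheodory}.
\end{itemize}
Ruling this out is the actual content of the proof.

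\paragraph{How the point-component case goes.} Work in the uniformly $\tau$-cofat domains $\Omega_n$. Consider curves joining $\overline{B(p,r)}$, together with the $B_k$ meeting it, to a fixed continuum $F\subset\Omega$. Their transboundary modulus tends to $0$ as $r\to0$, uniformly in $n$. To see this, apply the serial rule over annuli $A$ around $p$, chosen so that no $B_k$ meets two of them. On an annulus of radius $a$ use the metric $1/a$ and the weights $\diam(b\cap\bar A)/a$. Fatness gives $\sum_b\diam(b\cap\bar A)^2\lesssim a^2/\tau$.

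If $E^*$ were nondegenerate, the image continua in $D_n$ would have diameters bounded below. The lower bound for transboundary modulus in circle domains would then keep that modulus away from $0$. That lower bound comes from averaging the transboundary length over a one-parameter family of circles or segments, then using Cauchy--Schwarz and the fatness of disks. An argument of the same kind is needed to show $E_j^*=\Delta_j$ for the listed components.

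\paragraph{The estimates are reversed in your sketch.} Nearly touching disks have \emph{large} connecting modulus, so exhibiting a cheap admissible metric in $D_n$ contradicts nothing. The diameter-weighted metric with fatness is the tool for upper bounds on the $\Omega_n$ side. The $D_n$ side requires a lower bound.
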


Unlike the previous results, we remark that uniqueness can fail in this case. Specifically, if $E=\widehat \C\setminus \Omega$ is a Cantor set then it is cofat trivially. If, in addition, $E$ has positive area, then we show in Proposition \ref{proposition:positive} below that uniqueness fails.

In the same work Schramm introduced the notion of transboundary modulus and used it, in combination with cofat domains, to provide an alternative proof of the He--Schramm uniformization theorem (Theorem \ref{theorem:he_schramm}). Since then, transboundary modulus has become a standard tool in modern geometric function theory, with numerous applications to uniformization and rigidity problems in the complex plane and metric spaces. The majority of the results presented in this survey rely on this notion. 

\subsubsection{Cospread domains}\label{section:cospread}
Very recently, Esmayli and Rajala \cite{EsmayliRajala:quasitripod} obtained an affirmative answer to Koebe's conjecture for domains whose complementary components a satisfy a different type of a uniform geometric condition. Before stating the theorem we give the necessary definitions. A homeomorphism $\phi\colon (X,d_X)\to (Y,d_Y)$ between metric spaces is called \textit{quasisymmetric}\index{quasisymmetric map}\index{map!quasisymmetric} if there exists a homeomorphism $\eta\colon [0,\infty)\to [0,\infty)$ such that for every triple of distinct points $x,y,z\in X$ we have
$$\frac{d_Y(\phi(x),\phi(y))}{d_Y(\phi(x),\phi(z))}\leq \eta\left(\frac{d_X(x,y)}{d_X(x,z)}\right).$$ 
In this case we say that $\phi$ is $\eta$-quasisymmetric. The homeomorphism $\eta$ is called a \textit{distortion function}\index{distortion function}. From a geometric point of view, quasisymmetric maps preserve relative sizes and shapes. In some instances, e.g., when $X=Y=\C$, quasisymmetric maps coincide with quasiconformal maps, defined in Section \ref{section:qc}. In general, quasisymmetric maps can be regarded as a generalization of quasiconformal maps in metric spaces. 

\begin{figure}
	\includegraphics[scale=0.25]{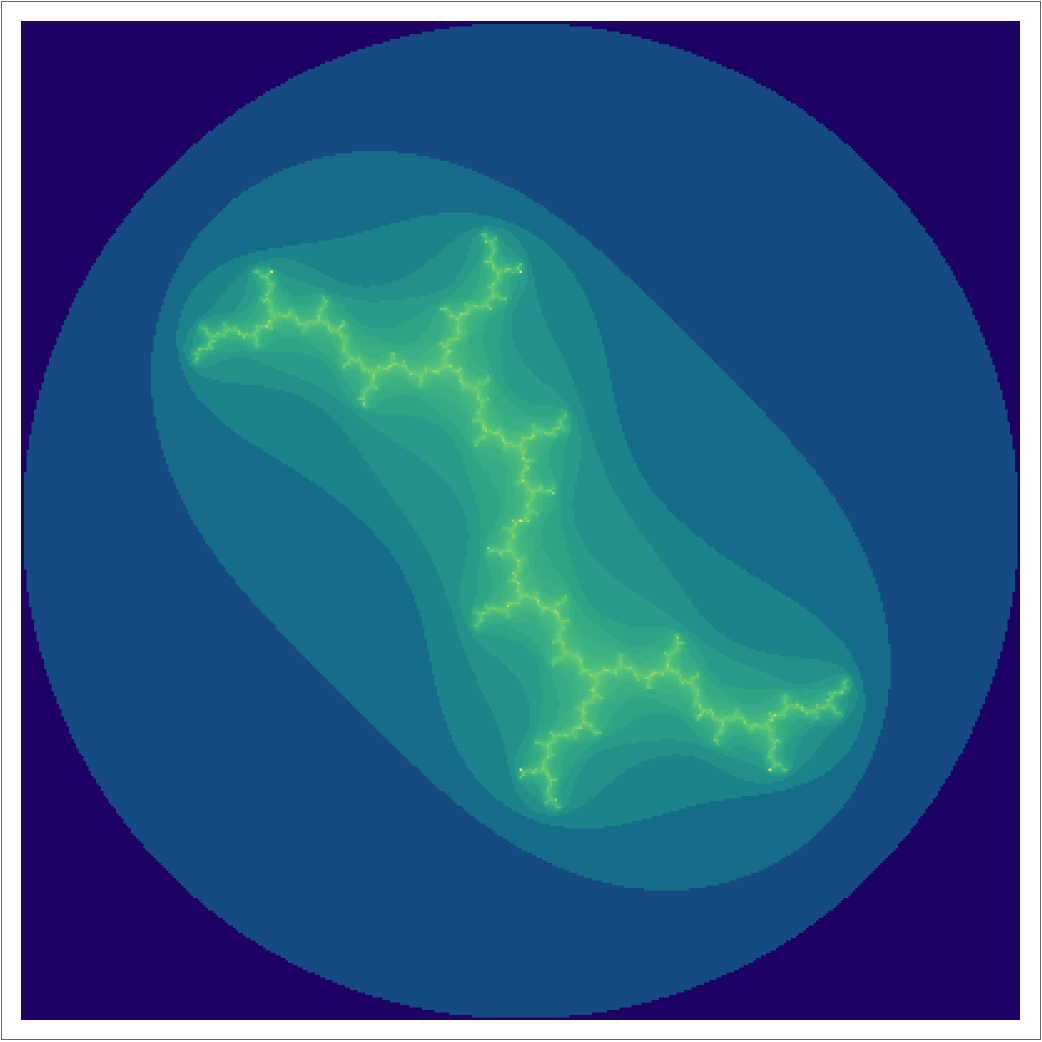}
	\caption{The Julia set of the polynomial $z^2+i$ is an example of an $\eta$-spread set because it contains $\eta$-quasitripods in all locations and scales.}\label{fig:julia}
\end{figure}

The \textit{standard tripod}\index{tripod} $T_0\subset \C$ is the union of the three segments from $0$ to $e^{2\pi i k/3}$, $k=0,1,2$. A set $T\subset \C$ is an $\eta$-\textit{quasitripod} if there exists an $\eta$-quasisymmetric homeomorphism $\phi\colon T_0\to T$. A set $A\subset \widehat \C$ is called $\eta$-spread if for every $z\in A\cap \C$ and every ball $B(z,r)$ that does not contain $A$ there is an $\eta$-quasitripod $T\subset A\cap B(z,r)$ with $\diam(T)\geq r/\eta(1)$. A domain $\Omega\subset \widehat \C$ is \textit{cospread}\index{cospread domain}\index{domain!cospread} if there exists a distortion function $\eta$ such that every component of $\widehat{\C}\setminus \Omega$ is $\eta$-spread.  We now state \cite{EsmayliRajala:quasitripod}*{Corollary 1.6}.

\begin{theorem}[\cite{EsmayliRajala:quasitripod}]\label{theorem:esmayli_rajala}
Every cospread domain $\Omega\subset \widehat \C$ is conformally equivalent to a circle domain.
\end{theorem}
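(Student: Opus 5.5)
The natural route follows Schramm's proof for cofat domains (Theorem \ref{theorem:schramm_cofat}): exhaust $\Omega$ by finitely connected domains, uniformize each by Theorem \ref{theorem:koebe:uniformization}, and pass to a limit. The only new ingredient should be a substitute for fatness. In the limit argument, fatness is used solely to show that complementary components of the approximating circle domains cannot collapse or merge in a way that destroys injectivity or the circle-domain structure. The plan is to prove that the $\eta$-spread condition gives the same transboundary modulus estimates.

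The steps are as follows.

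\textbf{Step 1 (exhaustion).} Enumerate finitely many complementary components $E_1,\dots,E_n$ of $\Omega$, choosing them so that every component of diameter at least $1/n$ appears. Group the remaining components into finitely many compact clusters, each of small diameter, to obtain a finitely connected domain $\Omega_n\supset\Omega$. Then take conformal maps $f_n\colon\Omega_n\to D_n$ onto circle domains, normalized at three points of $\Omega$ (or at one point together with a derivative). By normality, after passing to a subsequence, $f_n\to f$ locally uniformly on $\Omega$, and $f$ is conformal, hence injective. It remains to show that each component of $\widehat\C\setminus f(\Omega)$ is a point or a closed disk.

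\textbf{Step 2 (the core estimate).} A limiting complementary component of $f(\Omega)$ arises as a Hausdorff limit of disks and clusters in $\widehat\C\setminus D_n$. It fails to be a disk or a point only if clusters with small preimages accumulate with large images, or if distinct disks merge. To rule both out, we compare transboundary modulus of curve families in $\Omega$ and in $D_n$, using its conformal invariance. On the $D_n$ side, disks are $\tau$-fat with a universal $\tau$, which gives Schramm's lower bounds. On the $\Omega$ side we need \emph{upper} bounds for the transboundary modulus of families joining two continua while passing near a spread component $E$.

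This is where the quasitripods enter. In every ball $B(z,r)$ centered on $E$ there is an $\eta$-quasitripod of diameter comparable to $r$ lying inside $E$. A tripod separates a neighborhood into three ``sectors'', so $E$ blocks curves in the way a fat set does: any admissible metric must assign the transboundary mass of $E$ at least a definite fraction of $r$ to curves crossing $B(z,r)$. Concretely, I would prove the following. If $\rho$ is admissible for a family $\Gamma$ of curves crossing an annulus around $z\in E$, then one may replace $\rho$ on $E$ by a constant mass $\rho_E\approx \diam(E\cap B)$ and keep admissibility, at a cost of at most $C(\eta)$ times the modulus. This is the same estimate $\rho_E^2\lesssim\area$-type bound that fatness provides, now obtained from the length and separation properties of quasisymmetric tripods rather than from area.

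\textbf{Step 3 (conclusion).} With the modulus comparison in hand, Schramm's argument (or Rajala's version) goes through verbatim. Degenerate limits occur only at point components, and non-degenerate ones are limits of circles, hence circles.

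The main obstacle is Step 2. Quasitripods can be very thin, with zero area, so the area-based argument fails. One must use that the three legs force any curve passing near $z$ at scale $r$ either to hit $E$ or to travel distance comparable to $r$ around the tripod. This is a quasisymmetric ``linear local connectivity'' type estimate. My first attempt would be to pull back via the quasisymmetry $\phi\colon T_0\to T$ and use the three-point separation property of $T_0$ at each scale.
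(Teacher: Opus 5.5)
The survey does not reproduce Esmayli and Rajala's proof. It only says that the proof builds on Schramm's transboundary modulus and is much harder than the cofat case, because spread components can be thin and have zero area. Your framework (approximate, take a normal limit, compare transboundary moduli) is consistent with that. However, Step~2 is the only place the cospread hypothesis enters, and it has a genuine gap: the per-component estimate you propose cannot be summed over infinitely many thin components.

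In Schramm's argument, fatness is used \emph{collectively}, not one component at a time. Suppose you turn an $L^2$ density $\rho$ into a transboundary metric by charging each component $E$ a mass $\rho_E\approx\rho\cdot\diam E$. You then need $\sum_E\rho_E^2\lesssim\tau^{-1}\int\rho^2$. In the reverse direction you need a Bojarski-type bounded-overlap estimate. Both come from $\area(E)\gtrsim\tau(\diam E)^2$ together with disjointness of the components.

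A per-component bound of the form ``the cost of $\rho_E$ is at most $C(\eta)$ times the modulus near $E$'' cannot be summed, because the neighborhoods of thin disjoint components can overlap without bound. Here is a concrete construction.
\begin{itemize}
\item Let $V$ be a zero-area $\eta$-spread continuum, e.g.\ the Julia set of $z^2+i$ in Figure~\ref{fig:julia}. Rescaled copies of $V$ are $\eta$-spread with the same $\eta$.
\item Suppose finitely many disjoint copies have already been placed in a unit square. Their union is compact with zero area, so for small $s$ its $2s$-neighborhood has area less than $1/2$.
\item You can therefore add $\gtrsim s^{-2}$ new disjoint copies of diameter $s$. Each such stage adds a definite amount to $\sum(\diam E)^2$.
\item Now use more and more stages in successive dyadic annuli around a point $p$, with finitely many copies in each annulus so that they accumulate only at $p$. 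This gives a genuine cospread domain.
\end{itemize}
In that domain, take the radial density $\rho$ around $p$. The masses $\rho_E\approx\rho\cdot\diam E$ that your Step~2 prescribes have total cost not bounded by any multiple of $\int\rho^2$. The separation or LLC properties of a quasitripod control how a single component blocks curves. They give no control over the aggregate transboundary mass of many zero-area components. That aggregate control is exactly the difficulty the survey points to, and your proposal has no mechanism for it. So the claim that Schramm's argument then ``goes through verbatim'' is unsupported; this step is the whole problem.

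There are further inaccuracies.
\begin{itemize}
\item Your blocking claim is wrong as stated. The spread condition gives a quasitripod somewhere in $E\cap B(z,r)$, not one centered near $z$. Even a tripod centered at $z$ does not force a curve passing near $z$ to meet $E$, since the curve can stay inside one complementary sector.
\item ``Every admissible metric must charge $E$'' is a lower-bound statement. The replacement you describe next is an upper-bound construction. You need to be clear which inequality each one feeds.
\item In Step~1, small components can accumulate on the large ones. So they cannot be grouped into finitely many compact clusters disjoint from $E_1,\dots,E_n$. Any fix produces complementary components of $\Omega_n$ that are neither spread nor fat, and your estimates would have to be uniform over those too.
\item Even the finiteness of the set of components of diameter at least $1/n$ needs an argument here. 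Area is not available, so you would need something like a quantitative triod-packing bound.
\end{itemize}
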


We remark that uniqueness might fail here as in the case of Schramm's theorem above. The proof of Theorem \ref{theorem:esmayli_rajala} relies on Schramm's transboundary modulus, which is used in the proof of Theorem \ref{theorem:schramm_cofat}. However, unlike cofat domains, whose non-degenerate complementary components have positive area, in cospread domains the complementary components can be very thin and may have vanishing area; see Figure \ref{fig:julia}. This makes the proof significantly more involved than that of Schramm's theorem.

\subsubsection{Gromov hyperbolic domains}
Observe that the geometric conditions of Theorems \ref{theorem:herron_koskela}, \ref{theorem:schramm_cofat}, and \ref{theorem:esmayli_rajala} are not conformally invariant. That is, if $\Omega$ satisfies one of the conditions in these theorems, then conformal images of $\Omega$ need not satisfy the same condition. For instance, if $\Omega=\D$, then all we can say about conformal images of $\Omega$ is that they are simply connected, but they need not satisfy any strong Euclidean geometric condition. Recently, Karafyllia and the author \cite{KarafylliaNtalampekos:gromov_hyperbolic} proved Koebe's conjecture for the class of Gromov hyperbolic domains, which is a conformally invariant class that contains all finitely connected domains. We now give the formal definition. 

A geodesic metric space $X$ is \textit{Gromov hyperbolic}\index{Gromov hyperbolic space} if there exists $\delta>0$ such that for every geodesic triangle in $X$ each side is within the $\delta$-neighborhood of the union of the other two sides; see Figure \ref{fig:gromov}. Despite the simplicity of this definition, Gromov \cite{Gromov:hyperbolic} proved that several features of hyperbolic space can be recovered by this condition. Gromov hyperbolic spaces are an object of study in modern geometric group theory in connection with the uniformization problem; see the survey articles \cites{Bonk:icm, Kleiner:icm,  Ntalampekos:surface_survey}.

\begin{figure}
	\centering
	\includegraphics[scale=1]{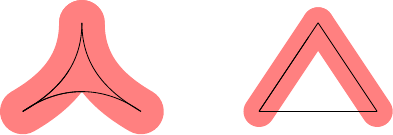}
	\caption{Left: A geodesic triangle in a Gromov hyperbolic space with the $\delta$-neighborhood of two sides containing the third side. Right: A geodesic triangle in the Euclidean plane, which is not Gromov hyperbolic.}\label{fig:gromov}
\end{figure}

Bonk, Heinonen and Koskela \cite{BonkHeinonenKoskela:gromov_hyperbolic} studied domains in the Riemann sphere that are Gromov hyperbolic when equipped with the quasihyperbolic metric. Let $\Omega\subsetneq \widehat \C$ be a domain, equipped with the spherical metric. The \textit{quasihyperbolic metric}\index{quasihyperbolic metric}\index{metric!quasihyperbolic} on $\Omega$ is defined as
$$k_\Omega(z,w)=\inf_\gamma \int_\gamma \frac{1}{\dist_\sigma(z,\partial \Omega)} \frac{2|dz|}{1+|z|^2},$$
where the infimum is taken over all rectifiable curves $\gamma$ with endpoints $z,w$; here $\sigma$ denotes the spherical metric. We say that $\Omega$ is Gromov hyperbolic\index{Gromov hyperbolic domain}\index{domain!Gromov hyperbolic} if the metric space $(\Omega,k_\Omega)$ is Gromov hyperbolic. By a result of Buckley and Herron \cite{BuckleyHerron:geodesics}*{Theorem B}, for hyperbolic domains in $\C$ (i.e., with at least two boundary points in $\C$) an equivalent condition is that the space $(\Omega,h_\Omega)$ is Gromov hyperbolic, where $h_\Omega$ is the hyperbolic metric on $\Omega$. From this result we obtain immediately the conformal invariance of Gromov hyperbolicity; see also \cite{KarafylliaNtalampekos:gromov_hyperbolic}*{Theorem 4.1} for an alternative approach. 

Examples of Gromov hyperbolic domains include all simply connected domains, all finitely connected domains, and all uniform domains. Intuitively, Gromov hyperbolic domains need not have good Euclidean geometry (for example, think of an arbitrary simply connected domain), but they have good hyperbolic geometry. For example, it is shown in \cite{BonkHeinonenKoskela:gromov_hyperbolic}*{Section 7} that each Gromov hyperbolic domain $\Omega$ has the \textit{Gehring--Hayman property}\index{Gehring--Hayman property}: there exists a constant $C\geq 1$ such that if $\gamma$ is a geodesic in the space $(\Omega,k_\Omega)$, then any curve $\beta$ in $\Omega$ with the same endpoints as $\gamma$ has spherical length
$$\ell_\sigma(\beta)\geq C^{-1}\ell_\sigma(\gamma).$$
Thus, quasihyperbolic geodesics have almost minimal length. A characterization of Gromov hyperbolic domains that involves the Gehring--Hayman property and a separation property is established by Balogh and Buckley in \cite{BaloghBuckley:gromov}.

We now state the main theorem of recent work of Karafyllia and the author \cite{KarafylliaNtalampekos:gromov_hyperbolic}, which establishes Koebe's conjecture for Gromov hyperbolic domains.

\begin{theorem}[\cite{KarafylliaNtalampekos:gromov_hyperbolic}]\label{theorem:karafyllia_ntalampekos}
Every Gromov hyperbolic domain $\Omega\subset \widehat \C$ is conformally equivalent to a uniform circle domain. Moreover, for any two conformal maps $f,g$ from $\Omega$ onto circle domains the composition $f\circ g^{-1}$ is a M\"obius transformation.
\end{theorem}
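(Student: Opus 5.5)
Write $\Omega$ for the given Gromov hyperbolic domain; after a M\"obius transformation we may assume $\infty\in\Omega$. The plan is to combine an exhaustion argument with a compactness argument. The conformal invariance of Gromov hyperbolicity is what keeps the estimates uniform along the way.

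\emph{Step 1: exhaustion and finite uniformization.} Exhaust $\Omega$ by an increasing sequence of finitely connected domains $\Omega_n\supset\{\infty\}$. Each $\Omega_n$ is obtained by filling in all but finitely many complementary components of $\Omega$ by small neighborhoods. By Koebe's uniformization theorem (Theorem \ref{theorem:koebe:uniformization}), there is a conformal map $f_n$ from $\Omega_n$ onto a circle domain $D_n$, normalized so that $f_n(z)=z+O(1/z)$ at $\infty$.

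\emph{Step 2: uniform geometry of the images.} The key point is to show that the domains $D_n$ are uniform circle domains with a uniform constant $A$. I would choose the exhaustion carefully so that the $\Omega_n$ are $\delta'$-Gromov hyperbolic with $\delta'$ depending only on $\Omega$; for example, cut along quasihyperbolic geodesics or fill complementary components in a way compatible with the hyperbolic geometry. The $D_n$ are then also uniformly Gromov hyperbolic by conformal invariance (Buckley--Herron). The remaining task is to prove the following: a circle domain that is $\delta$-Gromov hyperbolic is $A(\delta)$-uniform. The idea is as follows.
\begin{enumerate}
\item In a circle domain, the quasihyperbolic and hyperbolic geodesics can be compared with the Euclidean geometry. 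Reflection across the boundary circles gives good control near each disk.
\item Failure of uniformity produces two complementary disks or points that are very close relative to their sizes, which creates a thin ``neck''.
\item By the Gehring--Hayman property, quasihyperbolic geodesics must pass through such necks.
\item Quantitatively, a sequence of increasingly bad necks yields fat geodesic triangles, which contradicts $\delta$-hyperbolicity.
\end{enumerate}
I expect this implication, uniform Gromov hyperbolicity $\Rightarrow$ uniformity for circle domains, to be the main obstacle. I would first try a contradiction argument: rescale a sequence of counterexamples and pass to a limit circle domain. That limit would be Gromov hyperbolic but not uniform, and it would have a degenerate configuration of two tangent disks, or a disk and a point, giving infinitely thin bigons.

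\emph{Step 3: passing to the limit and uniqueness.} Uniform domains have uniform local connectivity, and their complementary components are uniformly fat (cf.\ Section \ref{section:cofat}). Normal family arguments then give a subsequence $f_n\to f$ locally uniformly on $\Omega$. Here $f$ is conformal and $f(\Omega)$ is the Hausdorff limit of the $D_n$. One checks that $f(\Omega)$ is a circle domain: limits of circles and points stay circles and points, and uniformity prevents complementary disks from merging. Uniformity is also preserved in the limit with the same constant $A$, so $f(\Omega)$ is a uniform circle domain.

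For uniqueness, let $g$ be another conformal map of $\Omega$ onto a circle domain. Then $h=g\circ f^{-1}$ maps the uniform circle domain $f(\Omega)$ conformally onto a circle domain. Its image is Gromov hyperbolic by conformal invariance, hence uniform by Step 2. The rigidity part of Theorem \ref{theorem:herron_koskela}, due to the author and Younsi, then shows that $h$ is M\"obius.
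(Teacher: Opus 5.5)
Your outline is reasonable: finitely connected approximations, uniform control of their Koebe images, a limit, and then rigidity of uniform circle domains. The uniqueness step is fine. You do not even need Step 2 there, because the Ntalampekos--Younsi rigidity of the uniform circle domain $f(\Omega)$ already covers conformal maps onto \emph{arbitrary} circle domains. The problem is that the two claims carrying all the weight are asserted rather than proved.

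First, Step 1 never produces finitely connected approximations with a Gromov constant independent of $n$, and this is not automatic. Scale-by-scale truncation can fail even when $\Omega$ is uniform. If $\Omega$ is the complement of two disjoint closed disks at distance $d$, then $\{z\in\Omega:\dist(z,\partial\Omega)>\varepsilon\}$ with $\varepsilon$ slightly below $d/2$ is a ring domain of tiny modulus. Conformally it is a very thin round annulus, whose hyperbolic core geodesic is very long, so its Gromov constant is huge. If such pairs occur at all scales in $\Omega$, a fixed sequence of truncation scales hits them infinitely often. Suggestions like ``cut along quasihyperbolic geodesics'' do not amount to a construction that avoids this. Building uniformly controlled approximations is a genuine part of the actual proof. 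There, $\Omega$ is approximated from the \emph{outside} ($\Omega_n\supset\Omega$) by finitely connected domains that are uniformly inner uniform, not by an exhaustion.

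Second, the lemma that a $\delta$-hyperbolic circle domain is $A(\delta)$-uniform is the heart of the matter, and your compactness sketch does not produce a contradiction. Take the model degeneration: two complementary disks whose gap tends to $0$ relative to their radii. The Gromov constants do blow up along this sequence. But the kernel limit has two tangent disks in its complement, so it is not a circle domain. Moreover, $\widehat\C\setminus(\bar{B}_1\cup\bar{B}_2)$ with tangent disks is simply connected, hence Gromov hyperbolic with a universal constant. The fat bigons live in the neck, and the neck closes up in the limit, so the limit retains no trace of the failure. The contradiction has to be extracted quantitatively at finite stages, using the round geometry, and no such estimates are given.

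Finally, the reason you give in Step 3 is not the operative one: limits of many small circles need not be circles (Figure~\ref{fig:caratheodory}). What would actually make the limit a circle domain, once the $D_n$ are uniformly uniform, is the following. Kernel limits of $A$-uniform domains are $A$-uniform, so the nondegenerate complementary components of the limit are quasidisks by Martio--Sarvas. Porosity of $\partial D_n$ then forces a neighbourhood of each interior point of such a component to lie, for large $n$, inside a single complementary disk of $D_n$. Hence each such component is the limit of one disk.
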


\begin{figure}
	\centering
	\begin{tikzpicture}
	\node at (0,0) {\includegraphics[scale=0.3]{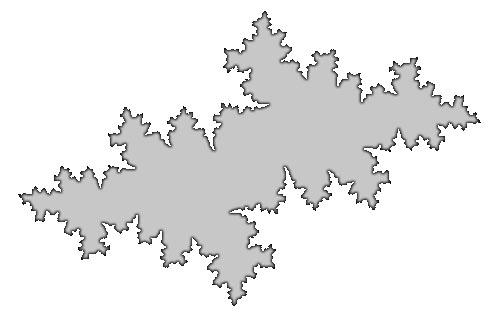}};
	\draw[<->] (2.3,0)--(3.3,0);
	\draw[fill=black!20] (5,0) circle (1cm);
\end{tikzpicture}
	\caption{Simply connected domains have well-behaved hyperbolic geometry, thanks to the Riemann mapping theorem, but their Euclidean geometry may be highly irregular. On the other hand, the unit disk enjoys both well-behaved Euclidean and hyperbolic geometry. This analogy extends to Gromov hyperbolic and uniform domains by Theorem \ref{theorem:gromov_uniform}.}\label{fig:simply_connected}
\end{figure}

The uniqueness statement follows from \cite{NtalampekosYounsi:rigidity} as in the case of uniform domains in Theorem \ref{theorem:herron_koskela}. This theorem gives a further insight. Although Gromov hyperbolic domains do not necessarily have good Euclidean geometry, they can be conformally transformed to uniform domains, which satisfy very strong geometric properties, as discussed in Section \ref{section:uniform}; see Figure \ref{fig:simply_connected}. Conversely, it was earlier shown by Bonk--Heinonen--Koskela \cite{BonkHeinonenKoskela:gromov_hyperbolic}*{Theorem 1.11} that uniform domains and conformal images of such domains are Gromov hyperbolic. Thus, we obtain the following consequence.

\begin{theorem}[\cites{BonkHeinonenKoskela:gromov_hyperbolic, KarafylliaNtalampekos:gromov_hyperbolic}]\label{theorem:gromov_uniform}
Gromov hyperbolic domains in $\widehat \C$ are precisely the conformal images of uniform domains.
\end{theorem}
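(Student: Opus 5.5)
The statement asks for two inclusions between classes of domains, and I would deduce each from one of the two cited results, using conformal invariance of Gromov hyperbolicity to move between a domain and its conformal images.

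\emph{Gromov hyperbolic domains are conformal images of uniform domains.} Let $\Omega\subset\widehat\C$ be Gromov hyperbolic. By Theorem \ref{theorem:karafyllia_ntalampekos} there is a conformal map $f$ from $\Omega$ onto a uniform circle domain $D$. Then $\Omega=f^{-1}(D)$ is a conformal image of the uniform domain $D$.

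\emph{Conformal images of uniform domains are Gromov hyperbolic.} Let $U$ be uniform and $g\colon U\to V$ conformal.
\begin{enumerate}[label=(\roman*)]
\item By \cite{BonkHeinonenKoskela:gromov_hyperbolic}*{Theorem 1.11}, $(U,k_U)$ is Gromov hyperbolic.
\item Gromov hyperbolicity is conformally invariant, so $V$ is Gromov hyperbolic as well. This follows from the Buckley--Herron equivalence between $k_\Omega$ and the hyperbolic metric $h_\Omega$ \cite{BuckleyHerron:geodesics}*{Theorem B}, together with the fact that conformal maps are isometries of the hyperbolic metric. Alternatively one can cite \cite{KarafylliaNtalampekos:gromov_hyperbolic}*{Theorem 4.1}.
\end{enumerate}

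\emph{Definitional bookkeeping.} Uniform domains are defined as subsets of $\C$ with the Euclidean metric, whereas Gromov hyperbolicity is defined on $\widehat\C$ with the spherical quasihyperbolic metric. The two should be reconciled as follows.
\begin{itemize}
\item If the circle domain produced by Theorem \ref{theorem:karafyllia_ntalampekos} contains $\infty$, compose with a Möbius transformation so that the domain lies in $\C$. This is possible when the complement has nonempty interior or at least one non-degenerate component; one should check that the resulting domain is still uniform in the Euclidean sense.
\item Domains whose complement has at most two points are handled directly. For example $\C$, $\C\setminus\{0\}$ and $\widehat\C$ minus one point are trivially conformal images of themselves, or are excluded by the convention in the definitions.
\end{itemize}

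\emph{Main obstacle.} Given the cited theorems, the argument is short. The only real care is this bookkeeping, namely that "uniform" is preserved under the normalizing Möbius map. I would address this via the known Möbius invariance, up to constants, of uniformity for bounded complementary components, or by choosing the normalization to send an interior point of a complementary disk to $\infty$.
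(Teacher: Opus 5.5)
Your proposal is correct and follows the paper's argument. Theorem \ref{theorem:karafyllia_ntalampekos} shows that Gromov hyperbolic domains are conformal images of uniform domains, and \cite{BonkHeinonenKoskela:gromov_hyperbolic}*{Theorem 1.11} gives the converse; the paper cites that theorem as already covering conformal images of uniform domains, whereas you derive this from the uniform case via the conformal invariance of Gromov hyperbolicity (Buckley--Herron), which the paper also notes.
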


We note that analogues of Koebe's conjecture have been studied on domains contained in metric surfaces\index{metric surface}, that is, topological surfaces equipped with a metric \cites{MerenkovWildrick:uniformization, RajalaRasimus:qs_koebe, Rehmert:thesis, HakobyanLi:qs_embeddings, LiRajala:cofat}. These results go beyond the scope of the present survey.

In Table \ref{tab:koebe} we summarize the results presented in this section.

\begin{table}
    \centering
    {\footnotesize
    \begin{tabular}{|c|c|c|}
        \hline
        \textbf{Types of domains} & \textbf{Existence} & \textbf{Uniqueness} \\
        \hline
        Simply connected & \cmark Riemann & \cmark  \\
        \hline
        Finitely connected & \cmark Koebe \cite{Koebe:FiniteUniformization} & \cmark Koebe \cite{Koebe:FiniteUniformization} \\
        \hline
        Countably connected & \cmark He--Schramm \cite{HeSchramm:Uniformization} & \cmark He--Schramm \cite{HeSchramm:Uniformization} \\
        \hline
        Uniform & \cmark  Herron--Koskela \cite{Herron:uniform} & \cmark Ntalampekos--Younsi \cite{NtalampekosYounsi:rigidity} \\
        \hline
        Cofat & \cmark Schramm \cite{Schramm:transboundary} & \xmark Proposition \ref{proposition:positive}\\
        \hline
        Cospread & \cmark Esmayli--Rajala \cite{EsmayliRajala:quasitripod} & \xmark Proposition \ref{proposition:positive}\\
        \hline
        Gromov hyperbolic & \cmark Karafyllia--Ntalampekos \cite{KarafylliaNtalampekos:gromov_hyperbolic} & \cmark Ntalampekos--Younsi \cite{NtalampekosYounsi:rigidity}\\
        \hline
    \end{tabular}
    }
    \medskip
    \caption{Existence and uniqueness results on Koebe's conjecture.}
    \label{tab:koebe}
\end{table}

\subsection{Conformal rigidity}\label{section:rigidity}

A circle domain $D$ is \textit{conformally rigid}\index{conformally rigid domain}\index{domain!conformally rigid} if every conformal map from $\Omega$ onto another circle domain is the restriction of a M\"obius transformation. Note that if there exists a conformal map $f$ from a domain $\Omega$ onto a circle domain $D$, then $f$ is unique up to postcomposition with M\"obius transformations if and only if the circle domain $D$ is conformally rigid; see Figure \ref{fig:rigidity}. Therefore, conformally rigid circle domains are closely related to the uniqueness of conformal maps as in Koebe's conjecture. 

Based on Theorem \ref{theorem:he_schramm}, every countably connected circle domain is conformally rigid. However, not every circle domain is conformally rigid. In the next statement we see that circle domains whose boundary is a Cantor set of positive area are not rigid.

\begin{figure}
	\includegraphics[scale=1]{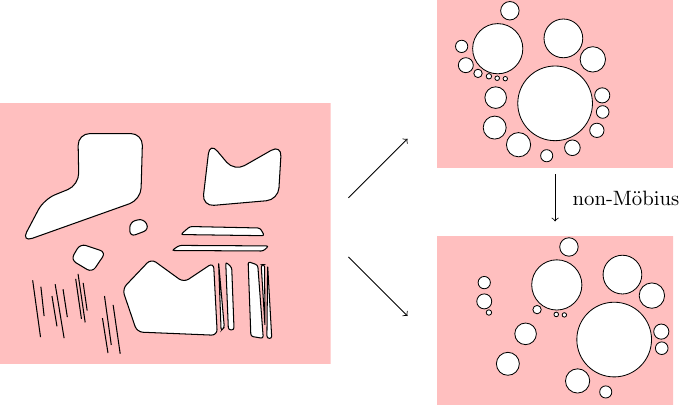}
	\caption{Two conformal maps from a domain to circle domains. These conformal maps do not differ by a M\"obius transformation if and only if the circle domains are not conformally rigid.}\label{fig:rigidity}
\end{figure}

\begin{proposition}\label{proposition:positive}
Let $E\subset \C$ be a totally disconnected compact set with positive area. There exists a homeomorphism $f\colon\C\to \C$ that is conformal in $\C\setminus E$ but it is not a M\"obius transformation.
\end{proposition}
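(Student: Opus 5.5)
The plan is to apply the measurable Riemann mapping theorem with a Beltrami coefficient supported on $E$. Let $\mu=\tfrac12\chi_E$. This is a measurable function on $\C$ with $\|\mu\|_\infty=1/2<1$, and it is not a.e.\ zero because $\area(E)>0$. By the measurable Riemann mapping theorem there is a quasiconformal homeomorphism $f\colon \C\to\C$ solving $\bar\partial f=\mu\,\partial f$ a.e. We normalize it by $f(0)=0$ and $f(1)=1$, so that $f(z)\to\infty$ as $z\to\infty$.

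The first thing to check is that $f$ is conformal on $\C\setminus E$. This set is open because $E$ is compact. On it $\mu=0$, so $f$ is a $1$-quasiconformal map there, hence orientation-preserving and locally injective. By Weyl's lemma, a $W^{1,2}_{\loc}$ solution of $\bar\partial f=0$ is holomorphic. So $f$ is a holomorphic homeomorphism of $\C\setminus E$ onto its image, which is what conformal means.

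Next we show that $f$ is not a M\"obius transformation. Suppose it were one, restricted to $\C$. Since $f$ maps $\C$ onto $\C$, it would be affine, $f(z)=az+b$, and then $\bar\partial f=0$ everywhere. But a quasiconformal map is differentiable a.e.\ with Jacobian $|\partial f|^2-|\bar\partial f|^2>0$ a.e. Hence $\partial f\neq 0$ a.e., and $\bar\partial f=\tfrac12\partial f\neq 0$ a.e.\ on $E$, a set of positive measure. This is a contradiction.

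The argument never uses that $E$ is totally disconnected; that hypothesis only matters for the interpretation. If $\Omega=\widehat\C\setminus E$, then $\Omega$ is a circle domain whose boundary components are all points. Its image $f(\Omega)$ is also such a domain, since $f(E)$ is totally disconnected. This is exactly the failure of rigidity we want to exhibit.

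The only genuinely deep input is the existence theorem for the Beltrami equation. The one subtle step is justifying conformality off $E$: we need the regularity result that $1$-quasiconformal maps are conformal, or equivalently that weak solutions of $\bar\partial f=0$ are holomorphic. The rest is routine.
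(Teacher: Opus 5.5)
Your proof is correct, but it takes a genuinely different route from the paper's. The paper does not use the Beltrami equation at all. It invokes Uy's theorem, which says that a compact set of positive area carries a nonconstant bounded analytic function $g$ on $\C\setminus E$ that is $L$-Lipschitz. It then extends $g$ to $\C$; this is where total disconnectedness enters, since it makes $\C\setminus E$ dense. Finally it sets $f(z)=z+\frac{1}{2L}g(z)$. The estimate $|f(z)-f(w)|\geq \frac12|z-w|$ gives injectivity and the homeomorphism property, and Liouville's theorem rules out $f$ being M\"obius.

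You instead solve the Beltrami equation with $\mu=\frac12\chi_E$. Weyl's lemma gives conformality off $E$, and $\mu\neq 0$ on a set of positive measure gives non-M\"obius-ness. Your last step can be shortened: an affine homeomorphism $az+b$ has $\partial f\equiv a\neq 0$ and $\bar\partial f\equiv 0$, so the Beltrami equation would force $\mu=0$ a.e. The a.e.\ positivity of the Jacobian is therefore not needed.

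Your route has several advantages. It rests on the standard measurable Riemann mapping theorem rather than on Uy's deeper result about Lipschitz analytic functions. As you observe, it never uses total disconnectedness, so it shows directly that every compact set of positive area fails to be conformally removable. It is also the simplest instance of the Sibner quasiconformal-deformation technique mentioned right after the proof. In that general setting $\mu$ must be made compatible with the reflections in the boundary circles so that circles go to circles, which is unnecessary here because there are no circles.

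The paper's route has its own advantages. It produces a bi-Lipschitz map that is a bounded perturbation of the identity, which is much stronger regularity than mere quasiconformality. Its verification of the homeomorphism property is also completely elementary.
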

\begin{proof}
According to a result of Uy \cite{Uy:lipschitz}, if $E$ has positive area, then there exists a non-constant bounded analytic function $g\colon \C\setminus E\to \C$ that is Lipschitz continuous; in particular, there exists $L>0$ such that $|g(z)-g(w)|\leq L|z-w|$ for all $z,w\in \C\setminus E$. Since $E$ is totally disconnected, $g$ extends continuously to a function in $\C$ that satisfies $|g(z)-g(w)|\leq L|z-w|$. We now define  
$$f(z)=z+ \frac{1}{2L}g(z), \quad z\in \C.$$
Obviously, $f$ is analytic in $\C\setminus E$. If $f$ were a M\"obius transformation, then $g$ would be analytic in $\C$ and hence constant by Liouville's theorem, a contradiction. Thus, $f$ is not a M\"obius transformation. Note that 
$$|f(z)-f(w)|\geq |z-w| -\frac{1}{2L}|g(z)-g(w)| \geq \frac{1}{2}|z-w|$$
for $z,w\in \C$, so $f$ is injective and in fact it is a homeomorphism of $\C$. 
\end{proof}

More generally, using the technique of quasiconformal deformation of Schottky groups, introduced by Sibner \cite{Sibner:KoebeQC}, it can be shown that any circle domain whose boundary has positive area is not conformally rigid. See also \cite{Younsi:RemovabilityRigidityKoebe}*{Lemma 18} for another proof based on Sibner's technique.

The conformal rigidity problem was studied thoroughly by He and Schramm \cite{HeSchramm:Rigidity}, who gave a sufficient condition for rigidity involving the size of the boundary of  a circle domain.

\begin{theorem}[\cite{HeSchramm:Rigidity}]\label{theorem:he_schramm_rigid}
If $\Omega\subset \widehat{\C}$ is a circle domain whose boundary has $\sigma$-finite length, then $\Omega$ is conformally rigid. 
\end{theorem}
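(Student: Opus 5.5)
The plan follows the strategy of He and Schramm: transboundary (Schramm) modulus combined with the length–area method. Let $f\colon \Omega\to \Omega^*$ be a conformal map onto another circle domain. I will show that $f$ extends to a homeomorphism of $\widehat\C$ which is conformal off $\partial\Omega$. Since $\partial\Omega$ has $\sigma$-finite length, it is conformally removable for such homeomorphisms, so $f$ will be Möbius.

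\textbf{Step 1: homeomorphic extension.} First, $f$ induces a bijection between the complementary components of $\Omega$ and those of $\Omega^*$, via the ends of the domains. Circle domains are locally nice, so by a standard prime-end argument $f$ extends continuously to nondegenerate boundary circles. Point components go to point components, or to disks, and a modulus argument is needed to rule out the latter. The result is a homeomorphism $F\colon\widehat\C\to\widehat\C$ with $F(\Omega)=\Omega^*$ that maps disks to disks.

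\textbf{Step 2: reflection and quasiconformality.} Reflect across the boundary circles to get the Schottky-type group $\Gamma$ generated by the reflections, and extend $F$ equivariantly to $\Gamma(\Omega)\cup$(orbits). This reduces the problem to a complement $\Lambda$ consisting only of point components: the limit set together with the original point components. The extension is conformal (anticonformal composed twice) off $\Lambda$. Now $\Lambda$ still has $\sigma$-finite length, since each reflection is locally bi-Lipschitz away from its circle, and a countable union of $\sigma$-finite sets is $\sigma$-finite.

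\textbf{Step 3: removability (main obstacle).} I need that a homeomorphism $F$ of the sphere, conformal off a totally disconnected set $\Lambda$ of $\sigma$-finite length, is conformal. The key tool is transboundary extremal length. For an annulus $A$, compare the modulus of the curve family in $A$ with the family of its image. Curves meeting $\Lambda$ can be handled because $\Lambda=\bigcup \Lambda_n$ with $\mathcal H^1(\Lambda_n)<\infty$. By a Fubini/coarea argument, almost every circle $|z-z_0|=r$ meets each $\Lambda_n$ in finitely many points, and these points carry zero transboundary mass. One then shows that $F$ and $F^{-1}$ quasi-preserve moduli of annuli, so $F$ is quasiconformal. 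Since $\Lambda$ has zero area, $\mu_F=0$ a.e., and $F$ is Möbius by Weyl's lemma.

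I expect the hardest part to be this modulus comparison. It requires a uniform control of how the image of a small piece of $\Lambda$ is covered, and the danger is that $F$ could blow up length on $\Lambda_n$. He and Schramm get around this by assigning transboundary weights to the (countably many, after grouping) boundary pieces and using the Cauchy–Schwarz inequality in the length–area estimate. I would try to reproduce that argument directly, first proving the case where $\Lambda$ has finite length, then passing to the $\sigma$-finite case by exhaustion with countable additivity of the exceptional sets.
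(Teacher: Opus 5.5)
The survey does not prove this theorem; it only cites He--Schramm. Your outline therefore has to stand on its own, and it breaks at the claim in Step~2 that $\Lambda$ has $\sigma$-finite length.

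\textbf{The main gap (Step~2).} Your justification only covers $\bigcup_{\gamma\in\Gamma}\gamma(E)$, where $E$ is the union of the point components of $\partial\Omega$. But $\Lambda$ also contains the points $\bigcap_{n}\gamma_{i_1}\cdots\gamma_{i_{n-1}}(\bar D_{i_n})$ coded by infinite reduced words in the reflections, i.e.\ the limit set of $\Gamma$ itself. These points are not M\"obius images of $\partial\Omega$, and they are present even when $E=\emptyset$. Already for a circle domain bounded by four disjoint circles, whose boundary has finite length, this limit set can have Hausdorff dimension bigger than $1$. Reflections in four mutually tangent circles generate a group whose limit set is an Apollonian gasket, of dimension $\approx 1.3$. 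By lower semicontinuity of the dimension, this persists after shrinking the circles slightly to make them disjoint.

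So $\Lambda$ need not have $\sigma$-finite length. The one input Step~3 runs on is then unavailable: that a.e.\ circle $|z-z_0|=r$ meets $\Lambda$ in a countable set, so that these points add nothing to the diameter of the image curve. For uncountable intersections the image can behave like a Cantor staircase. For infinitely many circles, even $\area(\Lambda)=0$, which you use at the end, is not automatic. Hence ``reflect, then apply Besicovitch-type removability'' does not prove the theorem. The nested limit points need a genuinely different argument, and your proposal does not supply one.

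\textbf{Step~1 is only asserted.} This is where the hypothesis really enters. Two things are needed: ruling out that a point component corresponds to a boundary circle of $\Omega^*$, and proving continuity of the extension at points of a circle where other components accumulate. Neither is a prime-end matter. Both can be handled by a transboundary length--area argument on $\Omega$ itself. Each complementary disk is treated as a single component, weighted by the diameter of its image. The hypothesis that $\partial\Omega$ has $\sigma$-finite length is used exactly to ensure that a.e.\ circle meets the point components in countably many points. This argument never reflects, so it never sees the limit set.

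\textbf{What is still missing.} A complete proof needs a final step that upgrades the homeomorphism $\bar\Omega\to\bar\Omega^*$ to a M\"obius map without any smallness of the limit set of $\Gamma$. One option is to exploit that $F$ maps every reflected disk $\gamma(D_j)$ onto a round disk. Another is a fixed-point-index argument of the type He--Schramm introduced for countably connected domains.
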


Here, the length of a set is the Hausdorff $1$-measure; see \cite{Folland:real}*{Section 11.2} or \cite{Heinonen:metric}*{Section 8.3} for the definition. Obviously, there are at most countably many circles in the boundary of a circle domain, so their total length is $\sigma$-finite. However, the point components in the boundary of a circle domain can form a very large set; for example, a Cantor set of positive area could be part of the boundary. Thus, the obstacle for the rigidity of a circle domain seems to be the size of this set. He and Schramm posed the following conjecture.

\begin{conjecture}[Rigidity conjecture, 1994]\index{rigidity conjecture}\index{conjecture!rigidity}\label{conjecture:he_schramm}
A circle domain $\Omega$ is conformally rigid if and only if its boundary $\partial \Omega$ is conformally removable. 
\end{conjecture}

Here a compact set $E\subset \widehat \C$ is \textit{conformally removable}\index{conformally removable set}\index{removable} if every homeomorphism of $\widehat \C$ that is conformal in $\widehat \C\setminus E$ is a M\"obius transformation. Conformally removable sets include sets of $\sigma$-finite length due to Besicovitch \cite{Besicovitch:Removable} and boundaries of uniform domains due to Jones \cite{Jones:removability}. See also \cites{JonesSmirnov:removability, KoskelaNieminen:quasihyperbolic, Younsi:RemovabilityRigidityKoebe, Ntalampekos:metric_definition, Ntalampekos:metric_removable} for other sufficient geometric conditions. Non-removable sets include the Sierpi\'nski gasket and Sierpi\'nski carpets \cites{Ntalampekos:gasket, Ntalampekos:CarpetsNonremovable}. We direct the reader to the surveys \cites{Younsi:removablesurvey, Bishop:hard} for further details and open problems on removability. 

Some positive evidence towards Conjecture \ref{conjecture:he_schramm} was provided by Younsi \cite{Younsi:RemovabilityRigidityKoebe}. Further evidence was provided by the author and Younsi \cite{NtalampekosYounsi:rigidity}.

\begin{theorem}[\cite{NtalampekosYounsi:rigidity}]\label{theorem:ntalampekos_younsi}
Let $\Omega\subset \widehat \C$ be a circle domain and suppose that for a point $z_0\in \Omega$ we have $\int_\Omega k_\Omega(z_0,z)^2\, d\Sigma(z)<\infty$. Then $\Omega$ is conformally rigid. 
\end{theorem}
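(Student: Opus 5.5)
We will prove rigidity by the transboundary-modulus method of He--Schramm, adapted so that the integrability of $k_\Omega(z_0,\cdot)^2$ replaces the $\sigma$-finite length hypothesis of Theorem~\ref{theorem:he_schramm_rigid}. Let $f\colon \Omega\to D$ be a conformal map onto another circle domain. After Möbius normalizations we may assume $\infty\in\Omega$, $f(\infty)=\infty$, and $f(z)=z+O(1/z)$ near $\infty$.

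\textbf{Step 1: boundary correspondence.} We first extend $f$ to a homeomorphism $F$ of $\widehat\C$.

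1. Conformal maps between circle domains induce a bijection between boundary components.
2. A point component corresponds to a point component, and a circle to a circle. The degenerate case is excluded by an extremal-length argument that uses the finiteness of $\int k_\Omega^2$.
3. We then extend $f$ across each circle by Schwarz reflection and define $F$ on point components by continuity.

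This yields a homeomorphism $F$ that is conformal on $\Omega$ and on the reflected copies. The task is then to show that $F$ is Möbius, i.e.\ that $\partial\Omega$ is "removable for this particular $F$".

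\textbf{Step 2: an area/length comparison on the point components.} The key mechanism, following He--Schramm, is to compare $F$ with the identity. We show the following:

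1. $\area(F(E))=0$ whenever $\area(E)=0$, where $E$ is the set of point components.
2. $F$ is absolutely continuous on almost every line, so that $F\in W^{1,2}_{\loc}$ with $\bar\partial F=0$ a.e.

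By Weyl's lemma, $F$ is then conformal on $\widehat\C$, hence Möbius, and by the normalization it is the identity.

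To obtain the Sobolev regularity, we use the hypothesis as follows. The quasihyperbolic distance $k_\Omega(z_0,z)$ controls the number of generations of circles separating $z$ from $z_0$. Consider the function $u(z)=k_\Omega(z_0,z)$, truncated and suitably modified; by hypothesis it lies in $L^2$. For a curve family $\Gamma$ of horizontal segments meeting $\partial\Omega$, we bound the transboundary length of $F(\gamma)$: the sum of the diameters of the images of the circles crossed, plus the length inside $D$. Via Cauchy--Schwarz with weight $u$, we get that for almost every segment $\gamma$ the image $F(\gamma)$ has finite length and $F(\gamma\cap E)$ has length zero.

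\textbf{Step 3: the main obstacle.} The hardest step is this last estimate: showing that the point-component part of $\partial\Omega$ contributes nothing to the length of $F(\gamma)$ for almost every $\gamma$. For this we will use a Koebe distortion estimate of the form
\[
|f'(z)|\,\dist(z,\partial\Omega)\asymp \dist(f(z),\partial D).
\]
Iterated along quasihyperbolic geodesics, it controls $\diam f(B)$ for Whitney squares $B$ in terms of $e^{C k_\Omega}$-type quantities. Integrability of $k_\Omega^2$, combined with a Fubini argument over parallel lines, should make the total image-length of the small Whitney squares accumulating at $E$ tend to zero.

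If this direct approach fails, we would try a transboundary modulus comparison instead:

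1. Show that $\mathrm{Mod}$ of the curve family joining two continua is preserved by $F$, using a test function built from $k_\Omega$ whose energy is finite precisely by the hypothesis.
2. Deduce that $F$ is $1$-quasiconformal.
3. Conclude that $F$ is Möbius.
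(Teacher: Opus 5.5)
\textbf{Main gap: the appeal to Weyl's lemma is not justified.} To get $\bar\partial F=0$ a.e.\ you need $F$ to be conformal almost everywhere on all of $\widehat\C$, including inside every disk $B_i$.

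A single Schwarz reflection in $\partial B_i$ defines $F$ conformally only on $R_i(\Omega)$, where $R_i$ is the reflection in $\partial B_i$. The rest of $B_i$ consists of $R_i(E)$ and the reflected copies $R_i(B_j)$, $j\neq i$, of all the other disks. Iterating gives the full reflection (Schottky) group. The set where $F$ is not known to be conformal is then essentially its limit set: the whole orbit of $E$, together with the points lying in infinitely many nested reflected disks.

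Your Steps 2--3 (Koebe distortion, Whitney squares and shadows, Fubini over lines) only handle crossings of the first-generation point components $E$. At best they reproduce the Jones--Smirnov theorem \cite{JonesSmirnov:removability} that $\partial\Omega$ is removable. Bounding the transboundary length of $F(\gamma)$ by $\sum\diam F(B_i)$ says nothing about absolute continuity of $F$ on $\gamma\cap B_i$.

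The hypothesis $\int_\Omega k_\Omega(z_0,z)^2\,d\Sigma(z)<\infty$ gives no direct control of the reflected domains or of that limit set; a priori the limit set need not even be null. So neither the ACL property nor $\bar\partial F=0$ a.e.\ on $\widehat\C$ follows.

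Said differently, you never prove that the boundary maps $f|_{\partial B_i}$ are M\"obius, which is exactly what would let you fill in the disks conformally. Removability of $\partial\Omega$ is not known to give this: ``removable implies rigid'' is precisely the open direction of Conjecture~\ref{conjecture:he_schramm}. A correct proof therefore needs an additional ingredient that uses the roundness of the complementary disks and controls the map on the whole limit set.

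Your fallback route has the same defect. Transboundary modulus (weights on disks, nothing on points) is preserved by $f$ essentially for free once the boundary correspondence is known. Preservation of classical modulus, and hence $1$-quasiconformality of $F$, again requires control inside the disks.

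\textbf{Step 1 is only asserted.} That point components correspond to point components, and that $f$ extends continuously to them, are nontrivial facts; they are not known for general circle domains. Here they must be extracted from a hypothesis on $\Omega$ alone, with no information about $D$. Phrases like ``an extremal-length argument that uses $\int k_\Omega^2$'' and ``define $F$ on point components by continuity'' presuppose what has to be proved.

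\textbf{Two smaller points.}
\begin{enumerate}
\item Bounds of the form $e^{Ck_\Omega}$ are useless under mere $L^2$-integrability of $k_\Omega$. The Jones--Smirnov mechanism is Cauchy--Schwarz along the chain of Whitney squares of a quasihyperbolic geodesic, which costs only a factor comparable to $k_\Omega$.
\item For Weyl's lemma what matters is that the non-conformal set itself be null (e.g.\ $\area(E)=0$, and more), not the Lusin-type statement $\area(F(E))=0$.
\end{enumerate}
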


Here $\Sigma$ denotes the spherical Lebesgue measure. The condition of the theorem had been shown by Jones and Smirnov \cite{JonesSmirnov:removability} to imply the conformal removability of $\partial \Omega$. Moreover, this condition holds for uniform circle domains, thus Theorem \ref{theorem:ntalampekos_younsi} implies the uniqueness statements in Theorem \ref{theorem:herron_koskela} and Theorem \ref{theorem:karafyllia_ntalampekos}.

In \cite{Ntalampekos:metric_definition} the author introduced the notion of sets that are \textit{countably negligible for extremal distance}\index{countably negligible for extremal distance}\index{CNED}, abbreviated as $\cned$ sets, and proved that such sets are conformally removable. This notion resembles and generalizes the classical notion of sets that are \textit{negligible for extremal distance}, abbreviated as $\ned$ sets, which were studied by Ahlfors and Beurling \cite{AhlforsBeurling:Nullsets}. In order to avoid technicalities, we do not define these notions.  In \cite{Ntalampekos:cned} the author showed that the class of $\cned$ sets includes sets of $\sigma$-finite length and boundaries of domains satisfying the condition of Theorem \ref{theorem:ntalampekos_younsi}. Finally, in \cite{Ntalampekos:rigidity_cned}, he showed the rigidity of circle domains whose boundary is a $\cned$ set, a result that we state below.

\begin{theorem}[\cite{Ntalampekos:rigidity_cned}]\label{theorem:ntalampekos_cned}
Let $\Omega\subset \widehat \C$ be a circle domain whose boundary is a $\cned$ set. Then $\Omega$ is conformally rigid. 
\end{theorem}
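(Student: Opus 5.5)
Let $f\colon \Omega\to D$ be a conformal map onto another circle domain $D$. The plan is to extend $f$ to a homeomorphism $F\colon\widehat\C\to\widehat\C$ that is conformal off $\partial\Omega$, and then use that $\partial\Omega$ is $\cned$, hence conformally removable by \cite{Ntalampekos:metric_definition}, to conclude that $F$ is M\"obius. Two ingredients are needed for this: a homeomorphic extension across the boundary, and a way to handle the disk complementary components, where $F$ is not a priori conformal.

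\textbf{Step 1 (boundary correspondence).} I would first show that $f$ induces a bijection between complementary components of $\Omega$ and of $D$, with point components going to point components and disks going to disks. I would also show that $f$ extends continuously to a homeomorphism $\overline\Omega\to\overline D$. For circle domains, the standard tool is Schramm's transboundary modulus. One compares the transboundary modulus of curve families joining two boundary components in $\Omega$ with that of the image family in $D$. Since conformal maps preserve transboundary modulus (with complementary components collapsed to points), a point component cannot correspond to a disk. The reason is that curve families separating a small point from a fixed continuum have transboundary modulus tending to zero, while those surrounding a disk do not. The $\cned$ hypothesis should ensure that $\partial\Omega$ does not carry modulus, so transboundary modulus in $\Omega$ agrees with ordinary modulus in $\widehat\C$ up to the disk components. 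I expect this to be the main obstacle: controlling the uncountably many point components through modulus estimates, specifically proving that the images of point components are points and that the correspondence is continuous.

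\textbf{Step 2 (Schottky reflection).} Extend $f$ by repeated reflections. For each disk component $B$ of $\widehat\C\setminus\Omega$, let $B'$ be the corresponding disk for $D$. Reflect $\Omega$ across $\partial B$ and $D$ across $\partial B'$, and define $f$ on the reflected copy by $R_{B'}\circ f\circ R_B$. By Schwarz reflection and Step 1, this is conformal across the circles. Iterating over the Schottky group generated by the reflections gives a homeomorphism $F$ of $\widehat\C$. It is conformal off the limit set together with all reflected images of $\partial\Omega$ minus the circles. Equivalently, it is conformal off $K=\bigcup_{g} g(\partial\Omega\setminus\text{circles})\cup\Lambda$, where $\Lambda$ is the residual limit set of the nested disks.

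\textbf{Step 3 (removability).} Next I need $K$ to be conformally removable. Each reflection is M\"obius, and $\cned$ sets are preserved under M\"obius maps. A countable union of closed $\cned$ sets is $\cned$ (the ``countably'' in the definition is designed for this). The remaining piece is the set $\Lambda$ of points lying in infinitely many nested reflected disks. I would handle it as in He--Schramm: either show that nested disks shrink to points with area zero and controlled modulus, so that $\Lambda$ adds nothing, or absorb $\Lambda$ into the $\cned$ framework via a modulus argument. With $K$ removable, $F$ is M\"obius, and so $f=F|_\Omega$ is M\"obius, which proves rigidity. If handling $\Lambda$ directly fails, I would instead compare the moduli of path families in $\Omega$ and $D$ to show that $f$ is $1$-quasiconformal on $\widehat\C$ in the sense of modulus, using the $\cned$ property of $\partial\Omega$ and the fact that the $\cned$ property persists in the image through Step 1.
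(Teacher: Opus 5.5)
You have the right outline: extend $f$ to $\overline{\Omega}$, reflect through the Schottky groups of $\Omega$ and $D$, and finish with a removability statement. But both steps where the real work lies are left open.

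\textbf{Step 1.} Step~1 is only asserted, and the heuristic you give is reversed. The family of curves separating $\overline{B}(p,\varepsilon)$ from a fixed continuum has modulus comparable to $\frac{1}{2\pi}\log(1/\varepsilon)\to\infty$. It is the \emph{joining} family whose modulus tends to $0$. More importantly, the hypothesis is one-sided. Nothing is known about $\partial D$, so you cannot run the comparison between $\Omega$ and $D$ symmetrically. The $\cned$ property is available only on the $\Omega$ side, to pass to curves meeting $\partial\Omega$ in countably many points. On the $D$ side you only know that the complementary components are disks and points. Your fallback that ``the $\cned$ property persists in the image'' is circular: $\cned$ is preserved by quasiconformal homeomorphisms of $\widehat{\C}$, and you do not yet know that $f$ extends to one. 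Proving that $f$ extends to a homeomorphism $\overline{\Omega}\to\overline{D}$ sending points to points and circles to circles is a substantial part of \cite{Ntalampekos:rigidity_cned}, and your sketch does not supply it. For $F$ to be a homeomorphism, you also need nested reflected disks to shrink to points, in both configurations.

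\textbf{Step 3.} This is the more serious problem. You try to make the residual set $\Lambda$ removable, but nothing in the hypothesis controls $\Lambda$. It is disjoint from every $g(\partial\Omega)$ and is typically a Cantor-type set produced by the whole configuration of circles. Showing $\area(\Lambda)=0$ would not help either, since sets of area zero need not be removable.

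The missing idea is that removability of $\Lambda$ is never needed. Every $z\in\Lambda$ lies in nested reflected disks $D_k$ with $\diam D_k\to 0$, and $F$ maps each $D_k$ onto a \emph{disk}. The metric characterization of conformal maps in \cite{Ntalampekos:metric_definition}, the one behind Rajala's example, allows round sets that contain the point without being centered at it. It also tolerates a $\cned$ exceptional set.

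So take the exceptional set $E=\bigcup_g g(\partial\Omega)$. It is a countable union of M\"obius images of the closed $\cned$ set $\partial\Omega$, hence $\cned$. This stability under countable unions is a theorem about $\cned$ sets; the word ``countably'' in $\cned$ refers to curves meeting the set in countably many points, not to unions. Off $E$, $F$ is conformal on the open tiles $g(\Omega)$, and at points of $\Lambda$ the disks $D_k$ supply the round sets. Hence $F$ is quasiconformal.

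$F$ is in fact conformal. At a.e.\ $z\in\Lambda$, rescaling $D_k$ and $F(D_k)$ shows that $DF(z)$ maps a disk onto a disk, so $\mu_F=0$ a.e.\ on $\Lambda$. Also $\area(E)=0$, since each $g(\partial\Omega)$ is removable. Hence $F$ is $1$-quasiconformal, i.e., M\"obius.

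Finally, your set $K$ is not closed, because point components may accumulate on circles. You must therefore work with the closed set $E\cup\Lambda$ in any case.
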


This result implies Theorem \ref{theorem:he_schramm_rigid}, Theorem \ref{theorem:ntalampekos_younsi}, and provides strong evidence for Conjecture \ref{conjecture:he_schramm}.
Nevertheless, Conjecture \ref{conjecture:he_schramm} was subsequently disproved by Rajala \cite{Rajala:rigidity}.

\begin{theorem}[\cite{Rajala:rigidity}]\label{theorem:rajala}
There exists a conformally rigid circle domain whose boundary is not conformally removable. 
\end{theorem}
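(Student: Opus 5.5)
The plan is to build a circle domain whose boundary is, apart from countably many circles, a totally disconnected compact set $E$. Rigidity should follow from a modulus argument, while non-removability should come from a construction in the spirit of the Sierpi\'nski carpet and gasket examples \cites{Ntalampekos:gasket, Ntalampekos:CarpetsNonremovable}. Natural candidates are Schottky-type sets or carpet-like sets built from disjoint closed disks accumulating in a controlled way, so that their complement of point components forms a Cantor-type set $E$ of zero area. Zero area is forced: by Proposition \ref{proposition:positive} and the Sibner remark, a positive-area boundary destroys rigidity. So the boundary must be non-removable but ``small'' in measure and ``large'' in a subtler sense.

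\textbf{Non-removability.} I would arrange $\partial\Omega$ to contain a set $K$ that is not $\ned$-like for curve families crossing it. Concretely, I would run a Sierpi\'nski-type subdivision in which removed squares are replaced by disks whose sizes decay so fast that the residual point set has zero area. I would then transfer, via a quasiconformal deformation, a known non-removable compact set onto $\partial\Omega$. The key observation is that non-removability is witnessed by a homeomorphism of $\widehat\C$ conformal off the set that is not M\"obius. Such a homeomorphism need not map circles to circles, so it gives no conflict with rigidity.

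\textbf{Rigidity.} This is the main obstacle, since Theorem \ref{theorem:ntalampekos_cned} is unavailable: a $\cned$ set is removable. Let $f\colon\Omega\to\Omega'$ be conformal onto a circle domain. The plan is to adapt the He--Schramm and Schramm transboundary modulus argument:
\begin{enumerate}
\item Show that $f$ extends to a homeomorphism of $\widehat\C$ that maps boundary circles to circles.
\item Reflect repeatedly across the boundary circles to obtain an extension to the Schottky group orbit.
\item Use transboundary modulus estimates to show that the dilatation of this extension vanishes on the residual set of point components.
\end{enumerate}
The new ingredient needed is that the point-boundary part $E$ is $\cned$ relative to the reflection structure. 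I would try to make $E$ ``thin'' in the sense that every family of curves through $E$ has transboundary modulus controlled by the circles, for instance by requiring $\sum r_i^2<\infty$ together with a uniform local separation of the disks. Meanwhile, the full boundary, which includes the accumulated circles, is non-removable because curves are forced to travel along circle arcs.

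The crux is therefore to separate two properties. Removability concerns all homeomorphisms conformal off $\partial\Omega$, whereas rigidity concerns only maps preserving circularity, which are far more constrained because of reflection. I expect the hardest step to be the careful design of circle sizes so that both properties hold simultaneously. I would try first a carpet of round disks in the style of a Schottky set, with residual set of zero area, and invoke the Bonk-type rigidity of round carpets as a model.
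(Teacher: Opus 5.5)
There is a genuine gap. Your plan puts non-removability and rigidity on the wrong parts of the boundary, and the step that should carry the proof is missing.

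\textbf{Where non-removability must live.} Write $\partial\Omega=P\cup\bigcup_i C_i$, with $P$ the point components and $C_i$ the circles. Let $g$ be a homeomorphism of $\widehat\C$ that is conformal off $\partial\Omega$. In the open set $\widehat\C\setminus\bar P$, the set $\partial\Omega\setminus\bar P$ is relatively closed and contained in $\bigcup_i C_i$, so it has $\sigma$-finite length. Besicovitch's theorem \cite{Besicovitch:Removable} then shows that $g$ is conformal off $\bar P$. Hence $\partial\Omega$ is removable if and only if $\bar P$ is. The circles can never produce non-removability, so the claim that ``curves are forced to travel along circle arcs'' has no content. The non-removable set must sit in the closure of the point components.

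\textbf{Why your rigidity step fails.} Given the above, your rigidity strategy defeats itself. If $\bar P$ were $\cned$, then $\partial\Omega$ would be removable by \cite{Ntalampekos:metric_definition}. The undefined notion ``$\cned$ relative to the reflection structure'' is exactly where an argument is needed, and none is given. Step 3 also aims at the wrong target. $P$ has zero area, so whether the dilatation vanishes there is irrelevant. The real issue is whether the reflected extension is quasiconformal across a set that, by design, admits non-M\"obius homeomorphisms conformal off it. Your proposed conditions do not touch this. The condition $\sum r_i^2<\infty$ holds for any disjoint disks (measured spherically), and relative separation constrains only the disks, not $P$. Separately, a carpet of round disks has empty interior, so it is not the closure of a circle domain. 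Bonk-type rigidity of round carpets concerns quasisymmetric maps of Schottky sets, which is a different problem.

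\textbf{What the paper's account uses instead.} Non-removability comes from Wu's theorem \cite{Wu:cantor}: the product $E\times F$ of a Cantor set $E$ with a sufficiently thick Cantor set $F$ is not removable. This totally disconnected set lies in $\partial\Omega$ as point components, and circles are added in its complement. Non-removability of $\partial\Omega$ is then immediate, with no quasiconformal transfer needed.

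Rigidity is not obtained by showing that this set is negligible, which is impossible. It comes from the metric characterization of conformal maps in \cite{Ntalampekos:metric_definition}. The extension of a conformal map onto another circle domain sends the boundary circles, and their reflected images, to circles. This is used near each point of the Cantor set, at small scales, to control the metric distortion of the extension there. That pointwise control is the kind of input the metric characterization can upgrade to conformality, and hence to a M\"obius map.

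You correctly observe that circle preservation is what separates rigidity from removability. However, your plan never uses it beyond the existence of the reflection extension, and that use is precisely the missing idea.
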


We note that if $\Omega$ is a conformally rigid circle domain with totally disconnected boundary, then its boundary is removable, as an immediate consequence of the definitions. So, any example that disproves this direction of the conjecture, as the one given by Rajala, must have boundary that contains both circles and points. 

The rigidity of the circle domain constructed by Rajala involves a metric characterization of conformal maps established by the author \cite{Ntalampekos:metric_definition}. The fact that the boundary is not removable follows from a theorem of Wu \cite{Wu:cantor}, which states that the product of a Cantor set $E\subset \R$ with a sufficiently thick Cantor set $F\subset \R$ is not removable. 

We remark that the reverse direction of Conjecture \ref{conjecture:he_schramm} remains open. The author has conjectured in \cite{Ntalampekos:cned} that conformal removability is equivalent to the $\cned$ condition. If this conjecture is true, then in Conjecture \ref{conjecture:he_schramm} the removability of $\partial \Omega$ implies the rigidity of $\Omega$. 

In Table \ref{tab:rigidity} we summarize several results related to the conformal rigidity of a circle domain $\Omega$ and the removability of $\partial \Omega$.

\begin{table}
    \centering
    {\footnotesize
    \begin{tabular}{|c|c|c|}
        \hline
        \textbf{Types of circle domains} & $\Omega$ \textbf{rigid} & $\partial \Omega$ \textbf{removable} \\
        \hline
        Simply connected & \cmark   & \cmark Morera \\
        \hline
        Finitely connected & \cmark Koebe \cite{Koebe:FiniteUniformization} & \cmark Morera \\
        \hline
        Countably connected & \cmark He--Schramm \cite{HeSchramm:Uniformization} & \cmark Besicovitch \cite{Besicovitch:Removable}  \\
        \hline
        Boundary has $\sigma$-finite length & \cmark He--Schramm \cite{HeSchramm:Rigidity}  & \cmark Besicovitch \cite{Besicovitch:Removable}\\
        \hline
        Uniform & \cmark Ntalampekos--Younsi \cite{NtalampekosYounsi:rigidity} & \cmark Jones \cite{Jones:removability} \\
        \hline
        Quasihyperbolic distance in $L^2$ &\cmark Ntalampekos--Younsi \cite{NtalampekosYounsi:rigidity} & \cmark Jones--Smirnov \cite{JonesSmirnov:removability} \\
        \hline
        Boundary is $\cned$ & \cmark Ntalampekos \cite{Ntalampekos:rigidity_cned} & \cmark Ntalampekos \cite{Ntalampekos:metric_definition}\\
        \hline
        Boundary has positive area & \xmark Sibner \cite{Sibner:KoebeQC} & \xmark Proposition \ref{proposition:positive}\\
        \hline
        Rajala's example & \cmark Rajala \cite{Rajala:rigidity} & \xmark Wu \cite{Wu:cantor}\\
        \hline
    \end{tabular}
    }
    \medskip
    \caption{Results related to the conformal rigidity of a circle domains and the removability of its boundary.}
    \label{tab:rigidity}
\end{table}

\subsection{Uniformization by exhaustion}
We discuss some approaches towards the general case in Koebe's conjecture. 

The central idea, which has been utilized for proving several cases of the conjecture, is to approximate a given domain $\Omega$ by a sequence of finitely connected circle domains $\{\Omega_n\}_{n\in \N}$, and then use Koebe's uniformization theorem (Theorem \ref{theorem:koebe:uniformization}) to find a sequence of conformal maps $f_n$ from $\Omega_n$ onto a finitely connected circle domain $D_n$, $n\in \N$. 

The circle domains $\Omega_n$, $n\in \N$, are taken so that they converge to the original domain $\Omega$ in the sense of \textit{kernel convergence of Carath\'eodory}\index{kernel!convergence}\index{Carath\'eodory kernel convergence} with respect to a fixed base point. We recall this notion of convergence. Let $z_0\in \Omega$ and let $\{\Omega_n\}_{n\in \N}$ be a sequence of domains such that $z_0\in \Omega_n$ for all $n\in \N$. We say that $\Omega$ is the $z_0$-\textit{kernel}\index{kernel} of $\{\Omega_n\}_{n\in \N}$ if $\Omega$ is the largest domain with the property that $z_0\in \Omega$ and each compact set $K\subset \Omega$ is contained in $\Omega_n$ for all sufficiently large $n\in\N$. We say that $\{\Omega_n\}_{n\in \N}$ \textit{converges to $\Omega$ in the Carath\'eodory sense with base at} $z_0$ if $\Omega$ is the $z_0$-kernel of every subsequence of $\{\Omega_n\}_{n\in \N}$.

Now, suppose that $\{\Omega_n\}_{n\in \N}$ is as above and for each $n\in \N$ we have a conformal map $f_n\colon \Omega_n\to D_n$ onto a finitely connected circle domain $D_n$. If the sequence $\{f_n\}_{n\in \N}$ is appropriately normalized, then by normality criteria we may pass to a subsequence that converges locally uniformly to a conformal map $f$ on $\Omega$. By a version of Carath\'eodory's kernel convergence theorem (see \cite{Goluzin:complex}*{Theorem V.5.1}) for multiply connected domains, we have $f(\Omega)=D$, where $D$ is the Carath\'eodory limit of $\{D_n\}_{n\in \N}$ with base at $f(z_0)$. The last step towards proving Koebe's conjecture is to verify that the domain $D$ is a circle domain. 

This is actually a highly nontrivial task and a naive approach does not work because a sequence of circle domains need not converge to a circle domain in the Carath\'eodory sense! See Figure \ref{fig:caratheodory} for an illustration of this phenomenon.

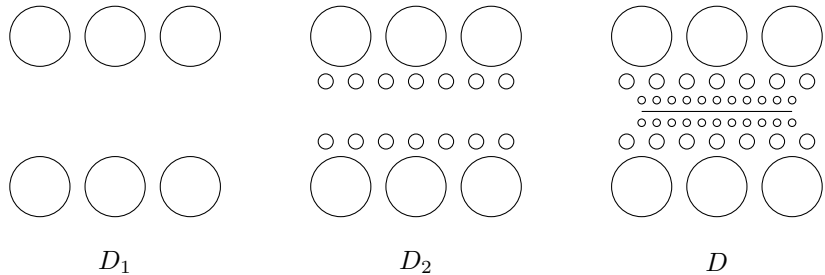
\begin{figure}
	\centering
	\begin{tikzpicture}
	\begin{scope}
		\draw (-1,1) circle (0.4cm);\draw (-1,-1) circle (0.4cm);
		\draw (0,1) circle (0.4cm);\draw (0,-1) circle (0.4cm);
		\draw (1,1) circle (0.4cm);\draw (1,-1) circle (0.4cm);
		
		\node at(0,-2) {$D_1$};
	\end{scope}
		\begin{scope}[shift={(4,0)}]
		\draw (-1,1) circle (0.4cm);\draw (-1,-1) circle (0.4cm);
		\draw (0,1) circle (0.4cm);\draw (0,-1) circle (0.4cm);
		\draw (1,1) circle (0.4cm);\draw (1,-1) circle (0.4cm);
		\draw (-1.2,0.4) circle (0.1cm);\draw (-1.2,-0.4) circle (0.1cm);
		\draw (-0.8,0.4) circle (0.1cm);\draw (-0.8,-0.4) circle (0.1cm);
		\draw (-0.4,0.4) circle (0.1cm);\draw (-0.4,-0.4) circle (0.1cm);
		\draw (0.4,0.4) circle (0.1cm);\draw (0.4,-0.4) circle (0.1cm);
		\draw (0,0.4) circle (0.1cm);\draw (0,-0.4) circle (0.1cm);
		\draw (0.8,0.4) circle (0.1cm);\draw (0.8,-0.4) circle (0.1cm);
		\draw (1.2,0.4) circle (0.1cm);\draw (1.2,-0.4) circle (0.1cm);
		
	\node at(0,-2) {$D_2$};
	\end{scope}
	\begin{scope}[shift={(8,0)}]
		\draw (-1,1) circle (0.4cm);\draw (-1,-1) circle (0.4cm);
		\draw (0,1) circle (0.4cm);\draw (0,-1) circle (0.4cm);
		\draw (1,1) circle (0.4cm);\draw (1,-1) circle (0.4cm);
		\draw (-1.2,0.4) circle (0.1cm);\draw (-1.2,-0.4) circle (0.1cm);
		\draw (-0.8,0.4) circle (0.1cm);\draw (-0.8,-0.4) circle (0.1cm);
		\draw (-0.4,0.4) circle (0.1cm);\draw (-0.4,-0.4) circle (0.1cm);
		\draw (0.4,0.4) circle (0.1cm);\draw (0.4,-0.4) circle (0.1cm);
		\draw (0,0.4) circle (0.1cm);\draw (0,-0.4) circle (0.1cm);
		\draw (0.8,0.4) circle (0.1cm);\draw (0.8,-0.4) circle (0.1cm);
		\draw (1.2,0.4) circle (0.1cm);\draw (1.2,-0.4) circle (0.1cm);
		\draw (-1,0.15) circle (0.05cm);\draw (-0.8,0.15) circle (0.05cm);\draw (-0.6,0.15) circle (0.05cm);\draw (-0.4,0.15) circle (0.05cm);\draw (-0.2,0.15) circle (0.05cm);\draw (0,0.15) circle (0.05cm);\draw (0.2,0.15) circle (0.05cm);\draw (0.4,0.15) circle (0.05cm);\draw (0.6,0.15) circle (0.05cm);\draw (0.8,0.15) circle (0.05cm);\draw (1,0.15) circle (0.05cm);
		\draw (-1,-0.15) circle (0.05cm);\draw (-0.8,-0.15) circle (0.05cm);\draw (-0.6,-0.15) circle (0.05cm);\draw (-0.4,-0.15) circle (0.05cm);\draw (-0.2,-0.15) circle (0.05cm);\draw (0,-0.15) circle (0.05cm);\draw (0.2,-0.15) circle (0.05cm);\draw (0.4,-0.15) circle (0.05cm);\draw (0.6,-0.15) circle (0.05cm);\draw (0.8,-0.15) circle (0.05cm);\draw (1,-0.15) circle (0.05cm);
		\draw (-1,0)--(1,0); 
	\node at(0,-2) {$D$};	
	\end{scope}
	\end{tikzpicture}
	\caption{A sequence of circle domains converging to a domain that contains a line segment in the boundary and hence is not a circle domain.}\label{fig:caratheodory}
\end{figure}

Therefore, in order to ensure that the limit $D$ is a circle domain one needs to impose further requirements on the sequence $\{\Omega_n\}_{n\in \N}$ that approximates the original domain $\Omega$. The proofs of He--Schramm \cite{HeSchramm:Uniformization} and Schramm \cite{Schramm:transboundary} of Koebe's conjecture for countably connected domains proceed by \textit{externally} approximating the domain $\Omega$; that is, the approximating sequence $\{\Omega_n\}_{n\in \N}$ satisfies $\Omega_n\supset \Omega$ for each $n\in \N$. Even under this condition, the sequence of circle domains $\{D_n\}_{n\in \N}$ need not converge to a circle domain and one has to impose further conditions. For example, in the work of Karafyllia and the author for the proof of Theorem \ref{theorem:karafyllia_ntalampekos}, the approximating domains $\{\Omega_n\}_{n\in \N}$ satisfy a strong geometric condition, called \textit{inner uniformity}.

Recently, Rajala \cite{Rajala:koebe} approached the conjecture using instead \textit{internal} approximations, or else, {exhaustions} of $\Omega$. An  \textit{exhaustion}\index{exhaustion} of $\Omega$ is a sequence of domains $\{\Omega_n\}_{n\in \N}$, each bounded by finitely many Jordan curves that are contained in $\Omega$, such that
$$\Omega_n\subset \Omega_{n+1}\subset \Omega\,\,\, \text{for each $n\in \N$ and}\,\,\, \Omega=\bigcup_{n\in \N}\Omega_n.$$
Rajala gave an alternative proof of the He--Schramm uniformization theorem (Theorem \ref{theorem:he_schramm}) by using exhaustions. 

The method of using exhaustions appears in the problem of mapping conformally a domain onto a domain bounded by horizontal slits\index{domain!slit}\index{slit domain}, as in Theorem \ref{theorem:slit}. Specifically, if $\{\Omega_n\}_{n\in \N}$ is \textit{any} exhaustion of a domain $\Omega$ and, for each $n\in \N$, $f_n$ is a conformal map from $\Omega_n$ onto a domain bounded by finitely many horizontal slits, then after appropriate normalizations and passing to a subsequence, $\{f_n\}_{n\in \N}$ converges to a conformal map from $\Omega$ onto a horizontal slit domain \cite{Courant:Dirichlet}*{Theorem 2.1, p.~54}. 

In sharp contrast, when uniformizing by circle domains, Rajala observed that the exhaustions have to be chosen carefully, because the phenomenon of Figure \ref{fig:caratheodory} can appear in this setting as well. Generalizing the method of Rajala, the author and Rajala \cite{NtalampekosRajala:exhaustion} proved that the method of exhaustions can be used in all domains that satisfy Koebe's conjecture; in other words, if the conjecture is true, it can be proved by exhaustions.

\begin{theorem}[\cite{NtalampekosRajala:exhaustion}]\label{theorem:ntalampekos_rajala}
Let $\Omega\subset \widehat \C$ be a domain that is conformally equivalent to a circle domain. Then there exists an exhaustion $\{\Omega_n\}_{n\in \N}$ of $\Omega$ and for each $n\in \N$ a conformal map $f_n$ from $\Omega_n$ onto a finitely connected circle domain $D_n$ such that the sequence $\{f_n\}_{n\in \N}$ converges locally uniformly in $\Omega$ to a conformal map onto a circle domain.
\end{theorem}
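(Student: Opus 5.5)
By conformal invariance it suffices to treat the case where $\Omega$ itself is a circle domain. Indeed, if $\phi\colon\Omega\to D$ is conformal onto a circle domain and the theorem holds for $D$, we push the exhaustion of $D$ back through $\phi$. Images of Jordan curves compactly contained in $D$ are Jordan curves in $\Omega$, so $\phi^{-1}(D_n')$ is an exhaustion of $\Omega$. The maps $g_n\circ\phi|_{\phi^{-1}(D_n')}$ then converge locally uniformly to $g\circ\phi$, which is conformal onto a circle domain.

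So let $\Omega$ be a circle domain, normalized so that $\infty\in\Omega$. Enumerate the (at most countably many) disk complementary components $B_1,B_2,\dots$. For each $n$, choose a finite collection of pairwise disjoint closed Jordan regions $Q_{n,1},\dots,Q_{n,k_n}$ covering $\widehat\C\setminus\Omega$, each contained in the $1/n$-neighborhood of $\widehat\C\setminus\Omega$ and with boundary in $\Omega$. Set $\Omega_n=\widehat\C\setminus\bigcup_j Q_{n,j}$.

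The key design choice is the \emph{shape} of these pieces. The disks $B_1,\dots,B_n$ should each be covered by a slightly enlarged round disk $(1+\epsilon_n)B_i$, which stays disjoint from everything else. The remaining complementary set, consisting of the point components and the small disks, should be covered by pieces that are \emph{round disks as well}, or at least nearly round with controlled geometry. This is achievable because a compact set of points and tiny disks can be covered by finitely many disjoint round disks of small radius around it: we cover by small disks and merge overlapping clusters into the smallest enclosing disk, iterating. Merged clusters may grow, but their diameters stay comparable to the $1/n$-neighborhood scale, provided we proceed carefully, for instance by using a grid of dyadic squares and taking circumscribed disks of connected unions. With this choice $\Omega_n$ is already (close to) a finitely connected circle domain. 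The conformal map $f_n$ from Theorem~\ref{theorem:koebe:uniformization}, normalized by $f_n(z)=z+O(1/z)$ at $\infty$, should then be close to the identity.

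The analytic step is to show that $f_n\to\id$ locally uniformly, or at least converges to a Möbius map. Standard normal-family arguments for hydrodynamically normalized univalent maps give a subsequential limit $f$, conformal on $\Omega$. By the kernel theorem, $f(\Omega)$ is the Carathéodory limit $D$ of the circle domains $D_n$. We must show that $D$ is a circle domain. The large circles $f_n(\partial((1+\epsilon_n)B_i))$ converge to circles for each fixed $i$. The remaining task is to show that the complementary components of $D_n$ arising from the small pieces have diameters tending to zero uniformly away from the fixed large ones, so that they collapse to points or accumulate only in a controlled way. This rules out the segment phenomenon of Figure~\ref{fig:caratheodory}.

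I expect this uniform smallness to be the main obstacle. The clean way is a transboundary-modulus or length–area argument. If some image component $f_n(Q_{n,j})$ had diameter bounded below, then the family of curves separating it from a fixed large component would have small transboundary modulus in $D_n$ but large modulus in $\Omega_n$, because the $Q_{n,j}$ are tiny disks at small scale in a circle-domain geometry. Since $f_n$ preserves transboundary modulus, this is a contradiction. If this estimate proves too delicate, the fallback is to choose the exhaustion adaptively. We would make $\Omega_n$ sufficiently close to the fixed circle domain $\Omega$ that, by the uniqueness in Theorem~\ref{theorem:koebe:uniformization} and continuity of Koebe uniformization under small deformations of finitely connected circle domains, $f_n$ is $1/n$-close to the identity on the compact set $\{z\in\Omega:\dist(z,\partial\Omega)\ge 1/n\}$. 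A diagonal argument then yields $f_n\to\id$, which is conformal onto the circle domain $\Omega$.
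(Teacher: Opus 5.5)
Your reduction to the case where $\Omega$ is itself a circle domain is correct, but the construction that follows does not work. The enlarged disks $(1+\epsilon_n)B_i$ need not be disjoint from the rest of $\widehat\C\setminus\Omega$, because other components may accumulate on $\partial B_i$. For example, a Cantor set lying outside $B_i$ whose radial projection (about the center of $B_i$) is an interval meets every concentric circle slightly larger than $\partial B_i$. One can even arrange a totally disconnected part of the complement that meets \emph{every} circle close to $\partial B_i$. In that case no exhaustion of $\Omega$ by finitely connected circle domains exists at all. Your merging procedure controls none of the features that matter: the sizes of merged disks, their disjointness, whether the boundary circles miss $\widehat\C\setminus\Omega$, and the nestedness $\Omega_n\subset\Omega_{n+1}$. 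Note also that if such a covering by round disks always existed, each $\Omega_n$ would already be a finitely connected circle domain and $f_n=\mathrm{id}$, so there would be nothing to prove. The whole content of the theorem lies in the situation where the pieces are necessarily non-round, and there your claim that $f_n$ is close to the identity has no justification.

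The analytic step also aims at the wrong statement. Uniform smallness of the individual image components does not exclude the segment phenomenon: in Figure \ref{fig:caratheodory} the circles that pile up on the segment have diameters tending to $0$. In the same way, tiny image disks can accumulate on the limit of the disks corresponding to the piece $Q\supset B_i$ and attach a ``hair'' to it, even though those disks converge to a disk. What must be shown is the following: for every complementary component $e$ of $\Omega$ and Jordan curves $\gamma\subset\Omega$ shrinking to $e$, the \emph{whole} part of $\widehat\C\setminus D_n$ enclosed by $f_n(\gamma)$ shrinks, uniformly in $n$. It must shrink to a point if $e$ is a point, and to the limit disk if $e=B_i$. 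Transboundary modulus is the natural tool for this, but it needs two ingredients you do not supply:
\begin{itemize}
\item an upper bound on the $\Omega_n$ side, which requires quantitative control of the shape of \emph{every} piece of the exhaustion (for instance uniform fatness, so that $\sum_Q\rho(Q)^2$ is controlled by area);
\item a Loewner-type lower bound in the circle domains $D_n$.
\end{itemize}
Your construction does not provide the first, and your sketch compares single components rather than clusters.

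The fallback is circular. Continuity of Koebe uniformization is available only for small perturbations of a \emph{fixed} finitely connected circle domain $G$, with components in bijection with those of $G$, and the allowed perturbation size $\delta(G)$ depends on $G$. Your pieces must cover all of $\widehat\C\setminus\Omega$, so a piece covering a cluster of diameter $\epsilon>\delta(G)$ is not an admissible perturbation of a point. Refining the pieces changes $G$ and hence the threshold. At the infinitely connected $\Omega$ itself there is no such continuity: as the survey recalls, exhaustions of circle domains must be chosen carefully, since the phenomenon of Figure \ref{fig:caratheodory} can occur. So ``$\Omega_n$ sufficiently close to $\Omega$'' presupposes exactly what is to be proved.
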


Rajala \cite{Rajala:duality} studied other special conditions that must be true for domains that satisfy Koebe's conjecture. Therefore, in an attempt to disprove the conjecture, it suffices to find a domain that does not satisfy the conclusion of Theorem \ref{theorem:ntalampekos_rajala} or the conditions of Rajala \cite{Rajala:duality}.

%On the other hand, there are other potential approaches towards proving the conjecture beyond the approximation methods discussed above. Specifically, Schramm introduced a potential theoretic method for proving the conjecture in the countably connected case \cite{Schramm:transboundary_long}. Bonk \cite{Bonk:square} introduced an extremal problem on finitely connected domains whose solution gives a conformal map onto a \textit{square domain}\index{square domain}\index{domain!square}, i.e., a domain whose complementary components are squares or points. It would be interesting to extend this method to countably connected domains and beyond. Note that if a domain is conformally equivalent to a square domain, then it is also conformally equivalent to a circle domain, thanks to Schramm's cofat uniformization theorem (Theorem \ref{theorem:schramm_cofat}). In fact, an equivalent formulation of Koebe's conjecture, is that each domain is conformally equivalent to a square domain.

\section{Quasiconformal uniformization by Schottky sets}\label{section:quasiconformal_uniformization}

\subsection{Quasiconformal maps}\label{section:qc}
An orientation-preserving homeomorphism $f\colon U\to V$ between open subsets of $\C$ is \textit{quasiconformal}\index{quasiconformal map}\index{map!quasiconformal} if $f$ lies in the Sobolev space $W^{1,2}_{\loc}(U)$ and  there exists $K\geq 1$ such that
\begin{align}\label{definition:qc}
\|Df(z)\|^2\leq KJ_f(z)
\end{align}
for a.e.\ $z\in U$; here $\|Df(z)\|$ denotes the operator norm of the differential of $f$ at $z$ and $J_f$ is the Jacobian determinant of $Df$. In this case we say that $f$ is $K$-quasiconformal. This is known as the \textit{analytic}\index{quasiconformal map!analytic definition} definition of quasiconformality.

A homeomorphism $f\colon U\to V$ between open subsets of the Riemann sphere $\widehat \C$ is quasiconformal if $f|_{U\setminus \{\infty, f^{-1}(\infty\}\}}$ is quasiconformal in the above sense. 

There are several equivalent definitions of quasiconformal maps. According to the \textit{metric definition}\index{quasiconformal map!metric definition} of quasiconformality, an orientation-preserving homeomorphism $f\colon U\to V$ between open subsets of $\C$ is quasiconformal if for each $z\in U$ and for every sufficiently small $r>0$ (depending on $z$), there exists $R>0$ such that
$$B(f(z),R)\subset f(B(z,r))\subset B(f(z),HR),$$
where $H\geq 1$ is a uniform constant; see Figure \ref{fig:qc_metric}. Informally, $f$ maps infinitesimal balls to sets of bounded eccentricity. Compare this property to the corresponding property of conformal maps, which map infinitesimal balls to infinitesimal balls. Another geometric interpretation of \textit{smooth} quasiconformal maps is that they distort angles in a bounded way, as opposed to conformal maps, which preserve angles. Thus, we may regard quasiconformal maps as a more flexible class than conformal maps, but with some distortion control imposed.

\begin{figure}
\begin{tikzpicture}[scale=.45]
\draw [line width=.5pt,fill=black,fill opacity=0.1] (0.76,-0.2) circle (2.98cm);
\draw [fill=black] (0.76,-0.2) circle (1.5pt);
\draw[color=black] (0.9,0.2) node {$z$};
\draw(0.9,-4) node {$B(z,r)$};

\draw (6.2,1.5) node {$f$};
\draw[->] (5.2,0.4) to [out=30, in=150] (7.2,0.4);
\draw[line width=.5pt,fill=black,fill opacity=0.1, rounded corners=8] (10.74,2.4) -- (10.6,1.1) -- (9.04,0.46) -- (11.58,-2.42) -- (14.94,-0.9) -- (14.46,2.94) -- (12.78,2.06) -- cycle;
\draw [fill=black] (12.54,0.18) circle (1.5pt);
\draw[color=black] (12.7,0.7) node {$f(z)$};
\draw(12.54,-4) node {$B(f(z),HR)$};
\draw (12.54,0.18)--(14.5,0.18)node[pos=.5,below]{$R$};

\draw [line width=.5pt] (12.54,0.18) circle (1.95cm);
\draw [line width=.5pt] (12.54,0.18) circle (3.27cm);
\end{tikzpicture}
\caption{The metric definition of quasiconformality.}\label{fig:qc_metric}
\end{figure}
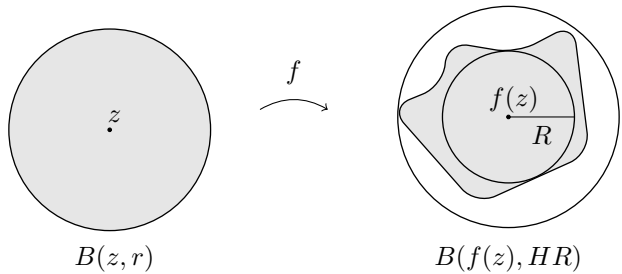

We include some fundamental properties of quasiconformal maps. Every $1$-quasiconformal map is conformal and vice versa. Note, though, that quasiconformal maps are not analytic or smooth in general. The inverse of a $K$-quasiconformal map is $K$-quasiconformal. Also, the composition of a $K_1$-quasiconformal map with a $K_2$-quasiconformal map is $K_1K_2$-quasiconformal. In particular, the composition of a $K$-quasiconformal map with a conformal map is $K$-quasiconformal. We direct the reader to \cites{Ahlfors:qc, Vaisala:quasiconformal, LehtoVirtanen:quasiconformal, AstalaIwaniecMartin:quasiconformal} for further background. 

In this section we work almost exclusively with quasiconformal maps of the entire sphere $\widehat \C$. Henceforth, we say that two sets $E,F$ are \textit{quasiconformally equivalent} if there exists a quasiconformal homeomorphism of $\widehat \C$ that maps $E$ onto $F$. Compare this with the notion of conformal equivalence of domains that we define in Section \ref{section:koebe}.

If $A$ is a positive parameter or collection of parameters, we use the notation $C(A)$ for a positive constant that depends only on $A$.

\subsection{Quasidisks}\label{section:quasidisks} 

A set $U\subset \widehat \C$ is a \textit{quasidisk}\index{quasidisk} if it is the image of the unit disk $\D$ (or equivalently of any other disk) under a quasiconformal map $f\colon \widehat \C \to \widehat \C$. If $f$ is $K$-quasiconformal for some $K\geq 1$, then $U$ is called a $K$-quasidisk. An intriguing problem since the early development of the theory of quasiconformal maps had been to provide a geometric characterization of quasidisks. This problem was resolved in a seminal work of Ahlfors \cite{Ahlfors:reflections}.

\begin{theorem}[\cite{Ahlfors:reflections}]\label{theorem:ahlfors_quasicircle}
A Jordan region $U\subset \C$ is a quasidisk if and only if $\partial U$ is a quasicircle, quantitatively. 
\end{theorem}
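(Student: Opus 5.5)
Throughout, I take ``quasicircle'' to mean a Jordan curve $\Gamma\subset\widehat\C$ satisfying Ahlfors' three-point (bounded turning) condition: there is $C\ge1$ such that for all $x,y\in\Gamma\cap\C$ the smaller of the two subarcs of $\Gamma$ joining them has diameter at most $C|x-y|$. ``Quantitatively'' then means that $C$ depends only on $K$, and conversely. After a M\"obius change of coordinates I may assume $U$ is bounded. I would prove the two implications separately, by a quasisymmetry argument in one direction and a conformal welding argument in the other.

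\emph{Necessity.} Let $f\colon\widehat\C\to\widehat\C$ be $K$-quasiconformal with $f(\D)=U$. Since $U$ is bounded, $f^{-1}(\infty)\notin\bar\D$, so I can precompose $f$ with a M\"obius map preserving $\D$ to arrange $f(\infty)=\infty$. To keep $K$ quantitative, I would instead postcompose with a M\"obius map and normalize by three points. A $K$-quasiconformal self-map of $\C$ is $\eta$-quasisymmetric with $\eta$ depending only on $K$; this is the standard equivalence of the analytic and metric definitions. Quasisymmetry preserves the bounded turning condition with controlled constants. Indeed, take $x,y\in\partial U$, write $x=f(a)$, $y=f(b)$, and let $\alpha$ be the smaller arc of $\partial\D$ between $a$ and $b$, so $\diam\alpha\le C_0|a-b|$. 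For any $w=f(c)$ with $c\in\alpha$ we get $|w-x|\le\eta(C_0)|x-y|$. Hence $f(\alpha)$ has diameter at most $2\eta(C_0)|x-y|$, and the smaller arc does too.

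\emph{Sufficiency.} Let $\Gamma=\partial U$ satisfy the three-point condition with constant $C$, and let $V$ be the complementary component containing $\infty$. Take Riemann maps $\varphi\colon\D\to U$ and $\psi\colon\widehat\C\setminus\bar\D\to V$ with $\psi(\infty)=\infty$. Both extend to homeomorphisms of the closures by Carath\'eodory's theorem. The welding map $h=\psi^{-1}\circ\varphi|_{\partial\D}$ is then a homeomorphism of the circle. The key step is to show that $h$ is quasisymmetric with constant depending only on $C$. I would do this with extremal length. For four points on $\Gamma$, consider the quadrilaterals they determine in $U$ and in $V$. Using the three-point condition, I would show that the modulus of one is bounded above and below in terms of the modulus of the other. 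Concretely, when the cross-ratio is near $1$, both moduli are comparable to $\log$ of a ratio of distances, via separating-annulus arguments that use bounded turning. Conformal invariance then transfers this comparison to the circle, and quasisymmetry of $h$ follows from the standard equivalence between quasisymmetry and moduli control for symmetric quadruples.

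Given this, let $H$ be the Beurling--Ahlfors (or Douady--Earle) extension of $h$, conjugated to $\widehat\C\setminus\bar\D$. It is $K(C)$-quasiconformal there and agrees with $h$ on $\partial\D$. Define $F=\varphi$ on $\bar\D$ and $F=\psi\circ H$ on $\widehat\C\setminus\D$. These definitions agree on $\partial\D$, so $F$ is a homeomorphism of $\widehat\C$ mapping $\D$ onto $U$. It is $K(C)$-quasiconformal off the circle, and a circle is removable for quasiconformal homeomorphisms (for instance by the ACL characterization, or because it has $\sigma$-finite length). Therefore $F$ is $K(C)$-quasiconformal, and $U$ is a $K(C)$-quasidisk.

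The main obstacle is the quasisymmetry of the welding map $h$. The three-point condition must be converted into two-sided modulus comparisons on both sides of $\Gamma$ simultaneously. This requires separate handling of quadrilaterals with small and large modulus, and careful use of bounded turning to build the annuli that give the lower bounds. If that route becomes messy, the fallback is Ahlfors' original strategy: construct a quasiconformal reflection across $\Gamma$ directly from the three-point condition, and then extend $\varphi$ by $z\mapsto\rho(\varphi(1/\bar z))$, where $\rho$ is that reflection.
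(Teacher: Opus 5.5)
The survey gives no proof of this theorem; it only cites Ahlfors. Your outline is essentially Ahlfors' original argument: for necessity, quasisymmetry of a $K$-quasiconformal self-map of $\C$; for sufficiency, extremal-length control of the welding $h=\psi^{-1}\circ\varphi$ followed by the Beurling--Ahlfors extension. Your ``fallback'' is therefore not an independent strategy, since the reflection is obtained from this same welding as $\rho(z)=\psi\bigl(H(1/\overline{\varphi^{-1}(z)})\bigr)$ for $z\in U$. In the necessity part, keep the precomposition with an automorphism of $\D$, which already costs nothing in $K$. Postcomposing ``instead'' with a M\"obius map would change $U$, and you would then have to prove M\"obius invariance of bounded turning.

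Two points in the sufficiency part need tightening.

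First, ``$h$ is quasisymmetric with constant depending only on $C$'' must be meant in the M\"obius-invariant sense. That means the Beurling--Ahlfors $M$-condition after sending a point and its image to $\infty$. Equivalently, every quadrilateral $(U;z_1,z_2,z_3,z_4)$ of modulus $1$ must have $(V;z_1,z_2,z_3,z_4)$ of modulus in a fixed interval $[a(C),b(C)]$. In the metric $\eta$-sense on $\partial\D$ the claim fails unless you normalize $\varphi$, because automorphisms of $\D$ are not uniformly quasisymmetric on $\partial\D$. The invariant form is exactly what your moduli argument produces, and it is what the Beurling--Ahlfors or Douady--Earle extension needs.

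Second, the key lemma needs no two-sided logarithmic asymptotics, only upper bounds for moduli. Let $E,F$ and $E',F'$ be the two pairs of opposite sides, and let $u\in E$, $w\in F$ be nearest points. If $\Delta(E,F)$ is small, bounded turning puts one of $E',F'$ inside $B(u,C|u-w|)$ and keeps the other outside $B(u,\min\{\diam E,\diam F\}/(2C))$. The annulus estimate then makes $\mathrm{mod}_U(E',F')$ small, hence $\mathrm{mod}_U(E,F)=1/\mathrm{mod}_U(E',F')$ large. So $\mathrm{mod}_U(E,F)=1$ forces $\Delta(E,F),\Delta(E',F')\geq\delta_0(C)$. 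Now use $\mathrm{mod}_V(E,F)\leq\mathrm{mod}_{\C}(E,F)\leq\pi\bigl(1+1/\Delta(E,F)\bigr)^2$, the same bound for $E',F'$, and $\mathrm{mod}_V(E,F)\,\mathrm{mod}_V(E',F')=1$. Together these give both bounds in $V$.
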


A Jordan curve $J\subset  \C$ is a \textit{quasicircle}\index{quasicircle} if there exists a constant $L\geq 1$ such that for every two points $z,w\in J$ there exists an arc $E\subset J$ connecting $z,w$ such that 
$$\diam E \leq L|z-w|.$$
In this case we say that $J$ is an $L$-quasicircle. One may define quasicircles in the sphere $\widehat \C$ by using the spherical metric in the above definition (see the discussion in \cite{Ntalampekos:tangent}*{Section 2.4}); Ahlfors' theorem remains true in this setting. The statement of Theorem \ref{theorem:ahlfors_quasicircle} is \textit{quantitative} in the sense that if $U$ is a $K$-quasidisk, then $\partial U$ is an $L$-quasicircle for some $L$ that depends only on $K$, and vice versa.

Geometrically, quasicircles do not have cusps. It is straightforward to establish that several fractal sets, especially self-similar ones, are quasicircles; hence, by the theorem of Ahlfors, they can be mapped to the unit circle via a quasiconformal map. See Figure \ref{fig:quasicircle} for examples of quasicircles and Figure \ref{fig:quasicircle_non} for a curve that is not a quasicircle.

\begin{figure}
	\centering
	\begin{tikzpicture}
	\node at (0,0) {\includegraphics[scale=0.4]{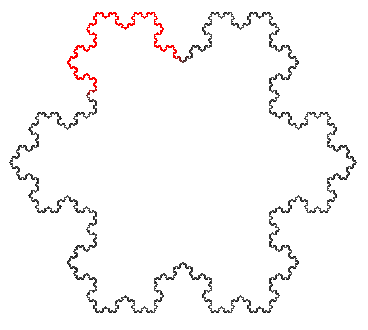}};
	\fill (-0.9,0.8) circle (1.5pt) node [right] {$z$};
	\fill (-0.1,1.2) circle (1.5pt) node [below] {$w$}; 
	\node at (-0.3,2) {$\diam E \leq L|z-w|$};
		
	\node[opacity=0.5] at (6,0) {\includegraphics[scale=0.3]{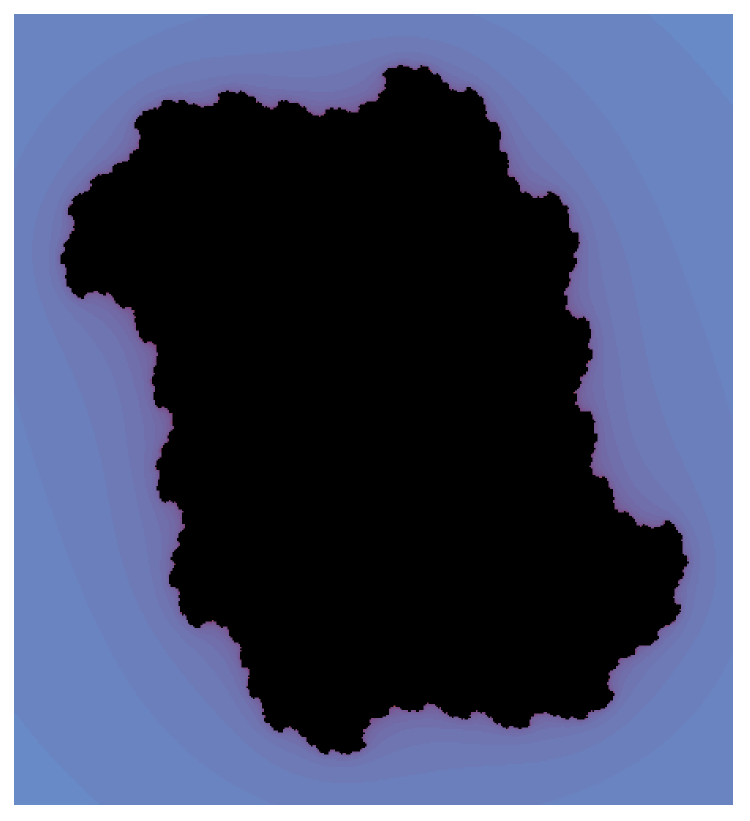}};
	\end{tikzpicture}
	\caption{Left: The von Koch snowflake is a quasicircle. Right: The Julia set of $z^2+c$ for $c\in \C$ close to $0$ is a quasicircle.}\label{fig:quasicircle}
\end{figure}

\begin{figure}
	\centering
	\begin{tikzpicture}
	\node at (0,0) {\includegraphics[scale=0.3]{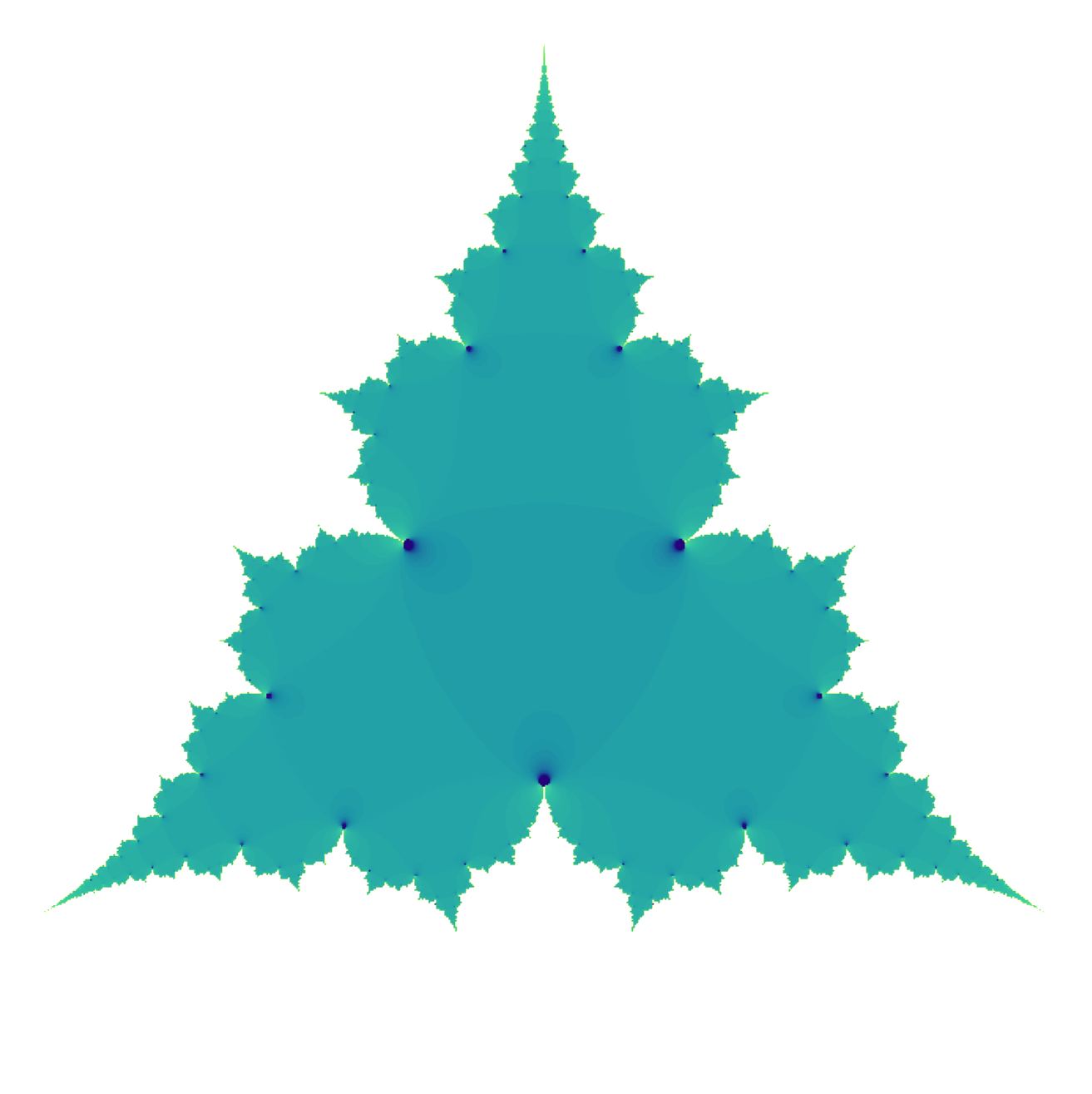}};
	\fill (-0.3,0.9) circle (1.5pt) node[below] {$z$};
	\fill (0.3,0.9) circle (1.5pt) node[below] {$w$};
	\draw[<->] (0.8,0.9) -- (0.8,2.3) node[pos=.5, right] {$\diam E \gg |z-w|$};	
	\end{tikzpicture}
	\caption{Quasicircles do not have cusps. The curve shown is the Julia set of a rational map that has cusps, so it is not a quasicircle.}\label{fig:quasicircle_non}
\end{figure}

\subsection{Quasiconformal annuli}

Thanks to Theorem \ref{theorem:ahlfors_quasicircle}, we have a complete understanding of the quasiconformal uniformization problem for Jordan curves. It is natural to ask whether a pair of disjoint Jordan curves can be mapped quasiconformally to a pair of circles in a controlled manner. Namely, if $J_1,J_2$ are Jordan curves, we wish to map them to circles with a $K$-quasiconformal homeomorphism of the sphere so that $K$ depends only on the geometric features of $J_1$ and $J_2$. By Theorem \ref{theorem:ahlfors_quasicircle} it is necessary that $J_1$ and $J_2$ are quasicircles, quantitatively depending on $K$. Is this condition sufficient?

As a motivating example, note that each open square in the plane is a $L$-quasidisk. Suppose that any pair of disjoint squares can be mapped to a pair of disjoint disks with a $K$-quasiconformal map of the sphere, for some uniform $K\geq 1$. Consider the unit square $[0,1]^2$ and for each $n\in \N$ let $A_n$ be a square with sides parallel to the axes such that $A_n$ is larger than $[0,1]^2$ and the distance of a vertex of $A_n$ from a vertex of $[0,1]^2$ tends to $0$ as $n\to\infty$; see Figure \ref{fig:square}. For each $n\in \N$ consider a $K$-quasiconformal map $f_n\colon \widehat \C\to \widehat \C$ that maps $[0,1]^2$ to the unit disk and the square $A_n$ to a disk. By normality criteria for quasiconformal maps (see \cite{LehtoVirtanen:quasiconformal}*{Section II.5}), after appropriate normalizations, a subsequence of $f_n$ converges to a quasiconformal map $f$ that maps the unit square onto $\D$ and a square or quarter plane $A$ to a disk or half plane $f(A)$. Note that $A$ and $[0,1]^2$ have a common vertex, while $f(A)$ is tangent to $\D$. Since quasiconformal maps quasi-preserve angles, the map $f$ cannot map the non-zero angle between $A$ and $[0,1]^2$ to the zero degree angle between $f(A)$ and $\D$; one can prove this formally using the fact that quasiconformal maps are locally quasisymmetric. Hence, we are lead to a contradiction. See \cite{Ntalampekos:CarpetsThesis}*{Section 3.12} for further details.

\begin{figure}
	\begin{tikzpicture}
	\begin{scope}
	\draw[shift={(-2.1,1.1)},scale=2,fill=yellow!50] (0,0)--(1,0)--(1,1)--(0,1)--cycle; 
	\node at (-0.55,1.5) {$A_n$};
	\draw[fill=black!20] (0,0)--(1,0)--(1,1)--(0,1)--cycle;
	\node at (0.5,0.5) {$[0,1]^2$};
	\draw[->] (1.5,1)--(3.5,1) node[above, pos=.5]{$L$-quasiconformal};
	\draw[fill=yellow!50] (5,1) circle (.9cm);
	\draw[fill=black!20] (6.5,1) circle (0.4cm);
	\node at (6.5,1) {$\D$};
	\end{scope}

	\begin{scope}[shift={(0,-4)}]
	\fill[color=yellow!50](-2,1)--(0,1)--(0,3)--(-2,3)--cycle;\draw (-2,1)--(0,1)--(0,3);
	\node at (-0.55,1.5) {$A$};
	\draw[fill=black!20] (0,0)--(1,0)--(1,1)--(0,1)--cycle;
	\node at (0.5,0.5) {$[0,1]^2$};
	\draw[->] (1.5,1)--(3.5,1) node[above, pos=.5] {quasiconformal};
	\node at (2.5,1.2){\textcolor{red}{\Huge{\xmark}}};
	\draw[fill=black!20] (6,1) circle (0.5cm) node {$\D$};	
	\fill[color=yellow!50] (5.5,0)--(5.5,2)--(4.5,2)--(4.5,0)--cycle;
	\draw (5.5,0)--(5.5,2);
	\end{scope}
\end{tikzpicture}
	\caption{}\label{fig:square}
\end{figure}

The above example shows that the good geometry of squares is not sufficient to guarantee that a pair of squares can be mapped to a pair of disks with controlled distortion that is independent of other features, such as the \textit{relative distance} of the squares. The {relative distance}\index{relative distance} of two sets $E,F\subset \C$ with positive and finite diameters is defined as
$$\Delta(E,F) =\frac{\dist(E,F)}{\min\{\diam E,\diam F\}}.$$

Herron observed that if one controls the relative distance of two quasidisks, then it is possible to map them to disks in a controlled way. Specifically, the following statement is a consequence of \cite{Herron:uniform}*{Theorem 2.6 and Corollary 3.5}.

\begin{theorem}[\cite{Herron:uniform}]\label{theorem:herron}
Let $L\geq 1$, $\delta>0$, and $U,V\subset \C$ be a pair of $L$-quasidisks such that $\Delta(U,V)\geq \delta$. Then there exists a $K(L,\delta)$-quasiconformal homeomorphism of $\widehat \C$ that maps $U$ and $V$ to disks. 
\end{theorem}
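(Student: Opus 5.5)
Since $U$ and $V$ are disjoint with positive distance, the set $\Omega=\widehat\C\setminus(\overline U\cup\overline V)$ is a doubly connected domain. The plan is to uniformize it conformally onto a round annulus and then repair the map on $U$ and $V$ with quasiconformal reflections.

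\textbf{Step 1 (normalization).} After a similarity we may assume $\diam U\le \diam V$, $\diam U=1$ and $0\in U$. Then $\dist(U,V)\ge\delta$.

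\textbf{Step 2 (conformal uniformization of $\Omega$).} By Koebe's uniformization theorem (Theorem \ref{theorem:koebe:uniformization}) there is a conformal map $\phi$ from $\Omega$ onto a round annulus $A=\{1<|z|<R\}$, possibly with $\infty$ on the $V$-side after a M\"obius normalization. The first quantitative point is to bound $\log R$ above and below in terms of $L$ and $\delta$. The modulus of $\Omega$ is comparable to the extremal distance between $U$ and $V$. The lower bound on $\Delta(U,V)$ gives a lower bound on the modulus: $U$ is separated from $V$ by a round annulus of definite modulus around $U$, which uses that $U$ is a quasidisk of diameter $1$ lying at distance $\ge\delta$ from $V$. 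An upper bound is not needed and may fail, since $V$ can be far away; a large modulus is harmless.

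\textbf{Step 3 (boundary extension).} The boundary curves $\partial U,\partial V$ are $L$-quasicircles. By Theorem \ref{theorem:ahlfors_quasicircle}, each admits an $L'(L)$-quasiconformal reflection across it. I would show that $\phi$ extends to a $K$-quasiconformal homeomorphism of $\widehat\C$ mapping $U$ and $V$ to the two complementary disks of $A$. The natural route is to define the extension on $U$ by $\rho_1\circ\phi\circ\rho_U$. Here $\rho_U$ is a quasiconformal reflection across $\partial U$ defined on a collar neighborhood of $\partial U$ in $\Omega$, and $\rho_1$ is reflection in $|z|=1$. This defines the extension on a collar inside $U$. The inner part is then filled by any quasiconformal extension from a quasicircle to a disk, with distortion controlled by the distortion of the collar. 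The same is done for $V$.

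\textbf{Main obstacle.} The reflection $\rho_U$ must map a collar inside $U$ into $\Omega$ without hitting $V$, and that collar must have modulus bounded below. This is exactly where $\Delta(U,V)\ge\delta$ enters. Since $U$ is an $L$-quasidisk, a quasiconformal reflection can be chosen to move points of $U$ only a controlled distance, comparable to $\diam U=1$. Shrinking the collar by a factor depending on $\delta$ therefore keeps its image within distance $<\delta$ of $\overline U$, so disjoint from $V$. Its modulus is then bounded below in terms of $L,\delta$, and the gluing yields a $K(L,\delta)$-quasiconformal map. For $V$, which may be much larger, one first applies an inversion centered in $U$, or argues symmetrically with $\diam V$ replaced by the relevant scale. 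The distance bound $\dist(U,V)\ge\delta\min\{\diam U,\diam V\}$ again gives a collar of definite modulus around $V$ in $\Omega$.

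Finally, the collar domains in the annulus must be round subannuli of definite width. This would follow from the quasisymmetric boundary behaviour of $\phi$, for example via the extremal-length comparison of Step 2. With that, the gluing lemma for quasiconformal maps across quasicircles completes the argument.
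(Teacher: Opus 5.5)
Your architecture is workable: map $\Omega$ conformally onto $\{1<|z|<R\}$, bound $\log R$ below using $\Delta(U,V)\ge\delta$, extend across $\partial U$ and $\partial V$ by conjugating with quasiconformal reflections, then fill in. But the step that carries all the quantitative content is not supplied, and the justification you give for it is circular.

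After reflecting you have $F=\rho_1\circ\phi\circ\rho_U$ on a collar $C_U\subset U$. You must then extend $F$ from the inner boundary curve $\beta$ of $C_U$ into the Jordan domain bounded by $\beta$, with dilatation depending only on $L,\delta$. This requires $\beta$ and $F(\beta)$ to be quasicircles and $F|_\beta$ to be quasi-M\"obius, all with controlled data. In other words, it requires control of $\phi$ along a curve in the \emph{interior} of $\Omega$.

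Your final paragraph instead asks that the image collars be round subannuli. A collar chosen geometrically, as in your ``main obstacle'' paragraph, will not have this property. You then derive the roundness from ``the quasisymmetric boundary behaviour of $\phi$''. But quasi-M\"obius boundary values of $\phi$ on $\partial U$ and $\partial V$ are not available a priori. Combined with a quasiconformal parametrization of $U$ and the Beurling--Ahlfors extension, they would give the theorem immediately with no collars at all, so appealing to them assumes what is to be proved.

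Conversely, the ``main obstacle'' is not where the difficulty lies. If you set $C_U:=\rho_U(\Omega')$ for a collar $\Omega'\subset\Omega$ of $\partial U$, then $\rho_U(C_U)=\Omega'$ misses $\bar V$ automatically, and $\mathrm{mod}(C_U)\ge \mathrm{mod}(\Omega')/K(L)$. The same works for $V$, with no inversion.

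The missing ingredient is an interior distortion estimate, and $\Delta(U,V)\ge\delta$ is needed only through $\log R\ge c(\delta)>0$. Your justification of that bound is also off: a round annulus of definite modulus separating $U$ from $V$ need not exist, since part of $V$ can enter a concavity of $U$. Instead, let $d=\dist(U,V)\ge\delta\diam U$ and let $N$ be the open $d$-neighbourhood of $\bar U$. Every curve from $\bar U$ to $\bar V$ has length at least $d$ inside $N$, so the test function $d^{-1}\chi_N$ bounds the modulus of the joining family by $\area(N)/d^2\le\pi(1+1/\delta)^2$.

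Now take $\Omega'=\phi^{-1}(\{1<|z|<\sqrt R\})$ and $\beta=\rho_U(\phi^{-1}(\{|z|=\sqrt R\}))$.
\begin{itemize}
\item The map $\phi^{-1}$ is univalent on round annuli of modulus $\log R/(4\pi)\ge c(\delta)/(4\pi)$ on both sides of the circle $\{|z|=\sqrt R\}$. A normal-families argument shows that $\phi^{-1}$ is $\eta(\delta)$-quasi-M\"obius on that circle: rescale to a fixed annulus, normalize three points by a M\"obius map, and use that limits are univalent and that convergence is $C^1$ near the circle.
\item Since $\rho_U$ is a $K(L)$-quasiconformal map of $\widehat\C$, it is quantitatively quasi-M\"obius. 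Hence $\beta$ is a quasicircle and $F|_\beta$ is a quasi-M\"obius map onto $\{|z|=R^{-1/2}\}$, both with data depending only on $L,\delta$.
\item Parametrize the domain bounded by $\beta$ as the image of $\D$ under a quasiconformal map of $\widehat\C$, and apply Beurling--Ahlfors to fill it in.
\item Glue across the quasicircles $\partial U$ and $\beta$, which are removable for quasiconformal homeomorphisms. Treat $V$ in the same way, using reflection in $\{|z|=R\}$.
\end{itemize}
This yields the $K(L,\delta)$-quasiconformal map. The survey itself gives no argument here; it quotes Herron's results on uniform domains.
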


Note that the converse is not true. Namely, if we can map $U$ and $V$ quasiconformally to disks, then $U$ and $V$ need not have large relative distance. For instance, this is the case if $U$ and $V$ are already disks that are too close to each other. We are naturally led to the following problem.

\begin{problem}\label{problem:annuli}
Find a necessary and sufficient condition so that a pair of disjoint quasidisks can be quasiconformally mapped to a pair of disjoint disks in a quantitative way. 
\end{problem}

We will return back to this problem in Section \ref{section:annuli_revisit}.

\subsection{Schottky sets}
More generally, one can ask whether three or more disjoint quasidisks can be quasiconformally mapped to disks. The complement of a collection of disjoint disks in $\widehat \C$ is called a \textit{Schottky set}\index{Schottky set}; see Figure \ref{fig:schottky}. Schottky sets appear in several problems in geometric function theory such as in Koebe's conjecture, since the closure of a circle domain is a Schottky set. Moreover, they appear as limit sets of Kleinian groups, so they are also studied from a dynamical point of view.

\begin{figure}
	\centering
	\includegraphics[scale=0.5]{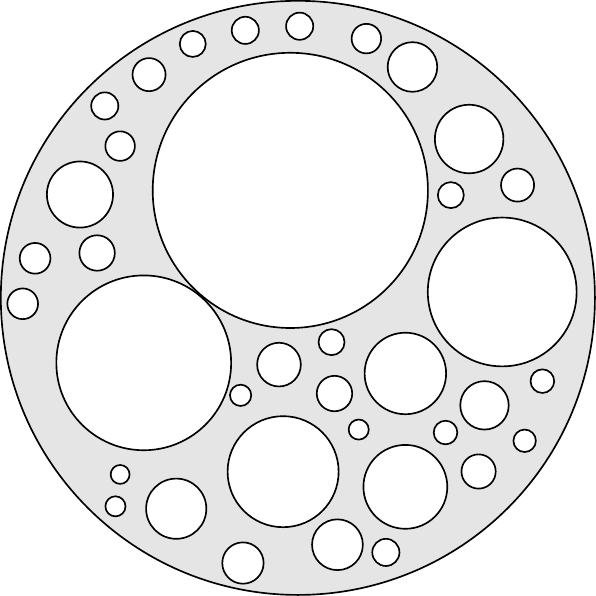}\hfill
	\includegraphics[scale=0.5]{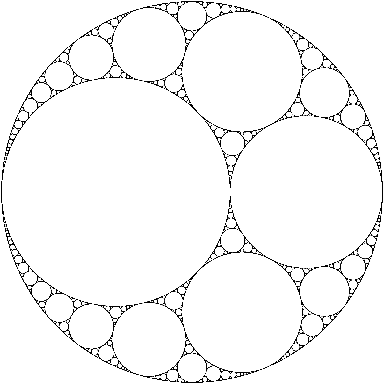}
	\caption{Examples of Schottky sets. The Schottky set in the right is known as the Apollonian gasket\index{gasket!Apollonian}\index{Apollonian gasket}.}\label{fig:schottky}
\end{figure}

\subsubsection{Early results}\label{section:early}
The problem of quasiconformal uniformization by Schottky sets dates back to 1990, when Herron and Koskela \cite{HerronKoskela:QEDcircledomains} proved Koebe's conjecture for uniform domains. In the same work they also obtained the following result; compare with Theorem \ref{theorem:herron_koskela}.

\begin{theorem}[\cite{HerronKoskela:QEDcircledomains}]\label{theorem:herron_koskela_qc}
Let $\Omega\subset \C$ be a uniform domain\index{uniform domain}\index{domain!uniform}. Then there exists a quasiconformal homeomorphism of $\widehat \C$ that maps $\bar \Omega$ onto a Schottky set.
\end{theorem}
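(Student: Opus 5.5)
The strategy is to obtain the circle domain conformally and then promote the conformal map to a global quasiconformal map by reflection across the boundary circles. By Theorem \ref{theorem:herron_koskela}, there is a conformal map $f\colon \Omega\to D$ onto a circle domain $D$. The plan has three parts. First, show that $f$ is quasisymmetric with respect to the Euclidean (or spherical) metric. Second, extend $f$ across each complementary component. Third, check that the glued map is a global quasiconformal homeomorphism of $\widehat \C$ with $f(\bar\Omega)=\bar D$, which is a Schottky set.

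For the first step I will use the Gromov hyperbolic viewpoint. Uniform domains are Gromov hyperbolic, and by Bonk--Heinonen--Koskela the Euclidean metric on a uniform domain is quasisymmetrically equivalent to a visual metric on the Gromov boundary. Conformal maps are isometries of the quasihyperbolic metric up to bounded multiplicative distortion (via the hyperbolic metric, Buckley--Herron). By Theorem \ref{theorem:karafyllia_ntalampekos}, we may also choose $D$ to be a uniform circle domain; by uniqueness it is the same $D$ up to M\"obius maps. Thus $f$ is a quasi-isometry between Gromov hyperbolic spaces, and therefore induces a quasisymmetry between the visual boundaries. Combined with the Bonk--Heinonen--Koskela correspondence for both uniform domains, this shows that $f$ extends to a quasisymmetric homeomorphism $\bar\Omega\to\bar D$ in the chordal metric.

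For the second step, each non-degenerate complementary component $B$ of $\Omega$ is a quasidisk with uniformly controlled constant, by Martio--Sarvas. The restriction $f|_{\partial B}$ is a quasisymmetry from a quasicircle onto a circle. By Ahlfors' Theorem \ref{theorem:ahlfors_quasicircle} together with the Beurling--Ahlfors extension, it extends to a $K$-quasiconformal map from $B$ onto the corresponding disk, with $K$ depending only on the uniformity constant. Point components require no extension. We define $F$ to be $f$ on $\bar\Omega$ and these extensions on each $B$.

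The third step is the main obstacle: proving that $F$ is quasiconformal on all of $\widehat\C$. The difficulty lies in the set of point components and in accumulation points of the disks, since there may be uncountably many point components and possibly positive area. I would use the metric definition of quasiconformality. The key inputs are the uniform quasisymmetry of $F$ on $\bar\Omega$ and on each $\bar B$, and the uniform relative separation of the components. That separation follows from uniformity, which gives a lower bound on $\Delta$ between large comparable components, in the spirit of Theorem \ref{theorem:herron}. Together these should show that $F$ is globally quasisymmetric, and hence quasiconformal. Concretely, I would verify the three-point condition for points lying in different pieces by choosing connecting curves of the uniform domain and applying the quasisymmetry on $\bar\Omega$. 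A cleaner alternative, if needed, is to use Jones' extension/removability for boundaries of uniform domains: $\partial\Omega$ is removable for quasiconformal maps, so a homeomorphism that is quasiconformal off $\partial\Omega$ is quasiconformal everywhere.
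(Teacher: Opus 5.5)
Your outline is viable, but it takes a different route from the paper's. The paper obtains this theorem as a corollary of Bonk's theorem (Theorem \ref{theorem:bonk}):
\begin{itemize}
\item the interiors $U_i$ of the non-degenerate complementary components of $\Omega$ are $L$-quasidisks, with $L$ depending only on the uniformity constant (Martio--Sarvas);
\item uniformity forces $\Delta(U_i,U_j)\geq L^{-1}$;
\item $\bar\Omega=\widehat\C\setminus\bigcup_i U_i$.
\end{itemize}
Bonk's theorem then gives a $K(L)$-quasiconformal map of $\widehat\C$ onto a Schottky set. These are exactly the two geometric facts you already invoke in Steps 2 and 3. Once you have them, your whole construction can be replaced by one citation, and the outcome is quantitative; the analytic work, via transboundary modulus, sits inside Bonk's theorem.

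What your route buys in exchange is a stronger conclusion. Your global map extends the conformal map $f\colon\Omega\to D$, so it is conformal on $\Omega$, which is the form in which Herron and Koskela stated it. The map from Bonk's theorem need not be conformal on $\Omega$. The price is dependence on Theorem \ref{theorem:karafyllia_ntalampekos}, the Bonk--Heinonen--Koskela boundary theory, and a removability theorem.

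Several steps need tightening.

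First, $f$ is bi-Lipschitz for the quasihyperbolic metrics, with an absolute constant, by the Koebe $1/4$ theorem. Your justification through the hyperbolic metric is wrong. A uniform domain such as $\D\setminus\{0\}$ has a boundary that is not uniformly perfect, so $h_\Omega$ and $k_\Omega$ are not comparable; Buckley--Herron only transfers Gromov hyperbolicity.

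Second, the quasi-isometry argument controls $\bar f$ (quasisymmetrically, or more naturally quasi-M\"obius) only on $\partial\Omega$, not on $\bar\Omega$. The statement on the closure needs the BHK uniformization results, and it is exactly what your direct metric-definition gluing would require.

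The removability route avoids the closure issue:
\begin{itemize}
\item boundary quasisymmetry gives a uniform Beurling--Ahlfors constant over all components $B$;
\item continuity of the glued map $F$ follows from $\diam B\to 0$ and uniform continuity of $\bar f$;
\item it then suffices that $\partial\Omega$ be removable for quasiconformal homeomorphisms.
\end{itemize}
For that last step, cite the Sobolev $W^{1,2}$-removability of boundaries of uniform domains (Jones, Jones--Smirnov), which implies quasiconformal removability. Conformal removability alone, as quoted in the survey, does not obviously suffice. The measurable Riemann mapping argument replaces $\partial\Omega$ by a quasiconformal image of it, and that image is not known to be removable.
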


At the same time McMullen \cite{McMullen:teichmuller} proved an analogous result for limit sets of Kleinian groups.

\begin{theorem}[\cite{McMullen:teichmuller}]\label{theorem:mcmullen}
If the limit set of a convex cocompact Kleinian group\index{Kleinian group} is a Sierpi\'nski carpet, then there exists a quasiconformal homeomorphism of $\widehat\C$ that maps it onto a Schottky set.
\end{theorem}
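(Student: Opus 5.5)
The plan is to use the standard McMullen-type argument based on the hyperbolic geometry of the convex core. Let $\Gamma$ be a convex cocompact Kleinian group whose limit set $\Lambda$ is a Sierpi\'nski carpet. The complementary components of $\Lambda$ are Jordan domains $D_i$, with pairwise disjoint closures. Their stabilizers $\Gamma_i$ are quasi-Fuchsian. Moreover, $\Gamma$ acts on $\{D_i\}$ with finitely many orbits. This follows from cocompactness: $(\widehat\C\setminus\Lambda)/\Gamma$ is a compact surface with finitely many components, namely the boundary of the compact hyperbolic manifold $M=(\mathbb H^3\cup\Omega(\Gamma))/\Gamma$. Since $\Lambda$ is a carpet, each component of $\partial M$ is incompressible. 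It is a closed hyperbolic surface whose fundamental group is $\Gamma_i$, and $\Lambda(\Gamma_i)=\partial D_i$.

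First step: deform $\Gamma$ in the quasiconformal deformation space so that every boundary component becomes totally geodesic. By Thurston's theory (hyperbolization of the double / the ``skinning'' fixed point, which for acylindrical $M$ gives a convex cocompact structure with totally geodesic boundary), there is a geometrically finite structure $\Gamma'$ with totally geodesic convex core boundary. Acylindricity holds because $\Lambda$ is a carpet, so distinct $\partial D_i$ are disjoint. By Bers/Sullivan, any two convex cocompact structures on the same compact $M$ are quasiconformally conjugate. So there is a quasiconformal $f\colon\widehat\C\to\widehat\C$ with $f\Gamma f^{-1}=\Gamma'$, and $f(\Lambda)=\Lambda(\Gamma')$.

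Second step: check that $\Lambda(\Gamma')$ is a Schottky set. Each boundary component lifts to a totally geodesic plane in $\mathbb H^3$, whose boundary at infinity is a round circle. The complementary components of $\Lambda(\Gamma')$ are the disks (half-spaces at infinity) cut off by these planes and disjoint from the convex hull. Hence $\widehat\C\setminus\Lambda(\Gamma')$ is a union of disjoint open round disks, and $f$ is the desired map.

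The main obstacle is the first step. We need the existence of the totally geodesic structure, which requires acylindricity of $M$ and Thurston's hyperbolization machinery. We also need to verify that the carpet hypothesis gives acylindricity. For the latter, I would argue as follows. An essential annulus would produce two boundary circles $\partial D_i$, $\partial D_j$ sharing a fixed point of a loxodromic element. This contradicts the disjointness of closures of the complementary components of a carpet.
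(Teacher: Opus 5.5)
Your argument is correct in outline. It is the three-dimensional route behind McMullen's original result: a hyperbolic structure with totally geodesic boundary on the acylindrical manifold $M$ (from hyperbolization of the double, or as the fixed point of the skinning iteration), followed by a quasiconformal conjugacy. Marden's isomorphism theorem is the precise reference for that last step.

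The survey gives no proof beyond citing \cite{McMullen:teichmuller}. The only argument it points to is that the statement is a corollary of Bonk's Theorem~\ref{theorem:bonk}. On that route you only check the two hypotheses for the complementary components $D_i$ of $\Lambda$.
First, the $D_i$ fall into finitely many $\Gamma$-orbits and each is a quasidisk (its stabilizer is quasi-Fuchsian). Since $\Gamma$ acts by M\"obius maps, they are uniform quasidisks.
Second, uniform relative separation follows by compactness. Express it through the M\"obius-invariant modulus of the ring $\widehat{\C}\setminus(\bar{D}_i\cup\bar{D}_j)$, and renormalize a nearly touching pair by an element of $\Gamma$ using the cocompact action on the convex hull of $\Lambda$. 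Then use that only finitely many $D_i$ have spherical diameter above a given $\varepsilon>0$, and that the closures $\bar{D}_i$ are pairwise disjoint.

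That proof stays in the plane and avoids Thurston's hyperbolization, Mostow rigidity and Marden's theorem. It exhibits the result as an instance of a criterion in which no group appears. Moreover, since $\Lambda$ has area zero, the uniqueness clause of Theorem~\ref{theorem:bonk} shows a posteriori that each $f\gamma f^{-1}$ is M\"obius on $f(\Lambda)$. Your route instead yields directly a Kleinian group $\Gamma'$, quasiconformally conjugate to $\Gamma$, whose limit set is the round carpet. It also explains the roundness geometrically, through the totally geodesic boundary of the convex core.

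Two minor points need patching.
\begin{enumerate}
\item If $\Gamma$ has torsion, first pass by Selberg's lemma to a torsion-free subgroup of finite index. It is still convex cocompact with the same limit set.
\item In the acylindricity step you also need the case where both boundary lines of the lifted strip lie in the same component $D_i$. In that case the annulus is boundary parallel: its boundary curves are freely homotopic in $\partial M$, $\Gamma$ contains no $\mathbb{Z}^2$, and the carpet condition prevents the core curve from being a proper power in $\Gamma$, since a root would have to stabilize $D_i$.
\end{enumerate}
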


Among the notions appearing in this result, we restrict ourselves to defining Sierpi\'nski carpets. The \textit{standard Sierpi\'nski carpet}\index{standard Sierpi\'nski carpet}\index{Sierpi\'nski carpet!standard} is a self-similar fractal set that is constructed by subdividing the unit square $[0,1]^2$ into nine squares of side length $1/3$ and removing the middle square; then we proceed inductively in each of the remaining eight squares. More generally, a Sierpi\'nski carpet is defined as follows.

\begin{definition}[Sierpi\'nski carpet]\index{Sierpi\'nski carpet}
A set $S\subset \widehat \C$ is a \textit{Sierpi\'nski carpet} if it satisfies the following conditions:
\begin{enumerate}[label=\normalfont(\arabic*)]
\item\label{carpet:1} The complementary components of $S$ are countably many disjoint Jordan regions $U_i$, $i\in \N$.
\item\label{carpet:2} $S$ has empty interior.
\item\label{carpet:3} $\diam U_i\to 0$ as $i\to\infty$ (measured in the spherical metric).
\item\label{carpet:4} $\bar{U_i}\cap \bar{U_j}=\emptyset$ for $i\neq j$.
\end{enumerate}
\end{definition}

It is a fundamental result of Whyburn \cite{Whyburn:theorem} that all Sierpi\'nski carpets are homeomorphic to each other and, in particular, to the standard Sierpi\'nski carpet. See Figure \ref{fig:carpet} for an illustration. 

\begin{figure}
	\includegraphics[scale=0.5]{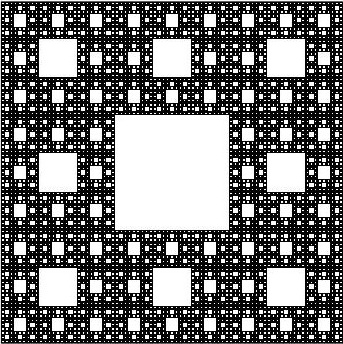}\hfill
	\includegraphics[scale=0.37]{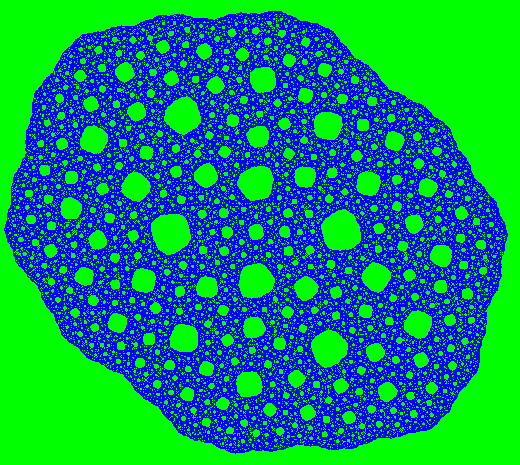}
	\caption{The standard Sierpi\'nski carpet and a Sierpi\'nski carpet Julia set of a rational map.}\label{fig:carpet}
\end{figure}

\subsubsection{Bonk's uniformization theorem}
Two decades after the results of Herron--Koskela and McMullen, Bonk proved in \cite{Bonk:uniformization} a general sufficiency criterion for quasiconformal uniformization by Schottky sets that does not rely on the geometry of uniform domains or on complex dynamics.

\begin{theorem}[\cite{Bonk:uniformization}]\label{theorem:bonk}
Let $\{U_i\}_{i\in I}$ be a collection of disjoint Jordan regions in $\widehat \C$. Suppose that there exists $L\geq 1$ such that 
\begin{enumerate}[label=\normalfont(\arabic*)]
\item\label{theorem:bonk:1} $U_i$ is an $L$-quasidisk for each $i\in I$ and
\item\label{theorem:bonk:2} for every pair of distinct indices $i,j\in I$ we have $\Delta(U_i,U_j)\geq L^{-1}$.
\end{enumerate}
Then there exists a $K(L)$-quasiconformal homeomorphism $f$ of the sphere that maps the set $S=\widehat \C\setminus \bigcup_{i\in I}U_i$ onto a Schottky set. Moreover, if $S$ has area zero, then $f$ is unique up to postcomposition with M\"obius transformations.
\end{theorem}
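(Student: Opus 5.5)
The natural route is to go through conformal uniformization and then upgrade the resulting map to a global quasiconformal map.

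\textbf{Reduction to circle domains.} Let $\Omega=\widehat\C\setminus\bigcup_{i\in I}\bar U_i$, after discarding degenerate cases. Condition \ref{theorem:bonk:1} together with Schramm's fatness of quasidisks shows that each $\bar U_i$ is $\tau(L)$-fat. Any remaining point components of $\widehat\C\setminus\Omega$ are trivially fat. Hence $\Omega$ is cofat, and Theorem \ref{theorem:schramm_cofat} yields a conformal map $\phi\colon\Omega\to D$ onto a circle domain. For the quantitative bounds I would not use this map directly. Instead I would work with finite subcollections $\{U_i\}_{i\in F}$, $F\subset I$ finite, and the Koebe maps $\phi_F$ of Theorem \ref{theorem:koebe:uniformization}. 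The goal is to prove estimates depending only on $L$, and then pass to a limit using normality of $K$-quasiconformal maps. The limit image is a Schottky set, because uniform control keeps the disks from degenerating.

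\textbf{Key step: $\phi_F$ is quasisymmetric with respect to transboundary modulus.} The plan is to show that $\phi_F$, restricted to $\bar\Omega_F$, is $\eta(L)$-quasisymmetric. The tool is Schramm's transboundary modulus, which is conformally invariant and treats each complementary component as a single point. I would establish a Loewner-type lower bound for it on both sides:
\begin{itemize}
\item On the original side, condition \ref{theorem:bonk:2} (uniform relative separation) and condition \ref{theorem:bonk:1} (quasicircle boundaries) together give such a bound.
\item On the circle side, the same type of bound needs a uniform relative separation of the image disks.
\end{itemize}
The image-side separation is obtained by a bootstrap or compactness argument. If the separation degenerated along some sequence $F$, then after rescaling we would get a limit configuration in which two disks are tangent. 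That contradicts the modulus comparison, because the two preimage quasidisks stay relatively separated. This is the argument of the square example in the text, run in reverse. With Loewner bounds on both sides, the standard modulus-to-distortion argument (as in Heinonen--Koskela) gives quasisymmetry of $\phi_F$ on the carpet-like set.

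\textbf{Extension and uniqueness.} Once $\phi_F$ is quasisymmetric, each boundary map $\partial U_i\to\partial B_i$ is a quasisymmetry from an $L$-quasicircle onto a circle. By Theorem \ref{theorem:ahlfors_quasicircle} (Ahlfors reflection, or the Beurling--Ahlfors extension) it extends to a $K(L)$-quasiconformal map $U_i\to B_i$. The pieces glue into a homeomorphism of $\widehat\C$ that is quasiconformal off $\partial\Omega_F$. The set $\partial\Omega_F$ is a finite union of quasicircles, which is removable for quasiconformality, so the glued map is $K(L)$-quasiconformal. Taking limits over $F\uparrow I$ gives $f$.

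For uniqueness: if $f,g$ both work and $S$ has area zero, then $h=g\circ f^{-1}$ maps a Schottky set of area zero onto a Schottky set and takes disks to disks. Precomposing with reflections, I would use Schwarz reflection across all the circles to replace $h$ by a conformal-off-a-null-set map. Concretely, $h$ is conformal on each disk after correction by the reflection group, and its Beltrami coefficient is invariant under the group. That group is ergodic-like on the limit set, and the limit set has measure zero, so $\mu_h=0$ a.e. Therefore $h$ is $1$-quasiconformal, hence M\"obius.

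\textbf{Main obstacle.} I expect the hardest step to be the uniform control of the relative separation of the image disks. This is exactly what the two-square example shows can fail without hypothesis \ref{theorem:bonk:2}, and it requires the careful two-sided transboundary modulus estimates.
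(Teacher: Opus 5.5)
Your existence argument follows the route the paper attributes to Bonk: Koebe maps for finite subcollections, uniform quasisymmetry on $\bar\Omega_F$ via Schramm's transboundary modulus, extension into the quasidisks, and a normal-family limit. One point to sharpen is the modulus-to-distortion step. It needs a Loewner lower bound on one side and an \emph{upper} bound on the other. Transboundary upper bounds are not automatic: the transboundary modulus of $\Gamma(E,F)$ is at least $1$ as soon as one complementary component meets both $E$ and $F$. Getting around this is where fatness of the components and the separation of the image disks are really used.

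\textbf{Uniqueness.} The genuine gap is here. You claim that after reflection the Beltrami coefficient is carried by the limit set of the reflection group, and that this set has measure zero. This is backwards.

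Let $T=f(S)$, $B_i=f(U_i)$, $B_i'=g(U_i)$. Let $R_i,R_i'$ be the reflections in $\partial B_i,\partial B_i'$, let $G,G'$ be the groups they generate, and let $\gamma\mapsto\gamma'$ be the isomorphism $R_i\mapsto R_i'$. The orbit $G(T)=\bigcup_{\gamma\in G}\gamma(T)$ is a countable union of null sets. So its complement, the residual set of points lying in infinitely many nested reflected disks, has \emph{full} measure. The reflected map is not conformal there for free: on $\gamma(B_i)$ its dilatation is a reflected copy of the dilatation elsewhere. Area zero of $S$ gives no information on this set. An ergodicity heuristic also cannot replace an argument excluding invariant line fields.

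The missing step is a blow-up argument; it is the content of the Bonk--Kleiner--Merenkov theorem the paper cites.
\begin{enumerate}
\item Set $h_1=h$ off $\bigcup_i B_i$ and $h_1=R_i'\circ h\circ R_i$ on $B_i$, and iterate. Each stage is a $K$-quasiconformal homeomorphism, since countably many circles are removable.
\item A limit $\tilde h$ is $K$-quasiconformal and equals $\gamma'\circ h\circ\gamma^{-1}$ on $\gamma(T)$. It maps each reflected disk $\gamma(B_i)$ onto the round disk $\gamma'(B_i')$.
\item At almost every point $p$ of the residual set, $\tilde h$ is differentiable with invertible derivative. Moreover $p$ lies in nested reflected disks $N_k$ with $\diam N_k\to 0$.
\item Rescaling around $p$ by $\diam N_k$ shows that $D\tilde h(p)$ maps a nondegenerate disk onto a Hausdorff limit of round disks. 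Hence $D\tilde h(p)$ is conformal.
\end{enumerate}
Therefore $\mu_{\tilde h}=0$ a.e., $\tilde h$ is M\"obius, and $g|_S=\tilde h\circ f|_S$.

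Note also that it is $\tilde h$, not $h=g\circ f^{-1}$, that is M\"obius. The map $h$ is arbitrary inside the disks, so only $f|_S$ is determined.
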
 

Equivalently, we may rephrase the conclusion by saying that $f$ maps each $U_i$ to a disk. Note that the relative distance $\Delta(U_i,U_j)$ in \ref{theorem:bonk:2} has to be computed using the spherical metric. Actually, the metric is not so important because the condition that $\Delta(U_i,U_j)$ is bounded from below, as in \ref{theorem:bonk:2}, holds with the spherical metric if and only if it holds with the Euclidean metric; see \cite{Ntalampekos:tangent}*{Remark 2.3}. Also, in the case that $S$ has area zero, the set $S$ is actually a Sierpi\'nski carpet. 

The result of Bonk generalizes the idea of Herron in Theorem \ref{theorem:herron}, which uniformizes of a pair of Jordan regions by a pair of disks. It is remarkable that the exact same sufficient conditions for uniformizing a pair of Jordan regions apply to the case of infinitely many ones. 

We note that Bonk's theorem implies the results of Herron--Koskela (Theorem \ref{theorem:herron_koskela_qc}) and McMullen (Theorem \ref{theorem:mcmullen}) mentioned above. One may ask the reason for a gap of two decades between these results and Bonk's result. The answer is that in the meantime Schramm \cite{Schramm:transboundary} devised the powerful tool of transboundary modulus\index{transboundary modulus} for the study of uniformization problems in the plane. Bonk relied crucially on this tool and studied it systematically, laying the groundwork for several further projects involving Sierpi\'nski carpets and domains in the plane and in metric spaces. 

Bonk’s remarkable criterion has been used to address rigidity problems arising in the study of the standard Sierpiński carpet and Julia sets \cites{BonkMerenkov:rigidity, BonkLyubichMerenkov:carpetJulia, BonkMerenkov:rigiditySpCarpets}, and it has been further generalized to the setting of non-planar carpets \cites{MerenkovWildrick:uniformization, Rehmert:thesis}. We state the main result of the work of Bonk and Merenkov \cite{BonkMerenkov:rigidity}, which represents a major achievement in this area, and its proof illustrates how Schottky sets can be used to address problems not directly related to Schottky sets or circles.
\begin{theorem}[\cite{BonkMerenkov:rigidity}]\label{theorem:bonk_merenkov}
Every quasisymmetric homeomorphism of the standard Sierpi\'nski carpet is a Euclidean isometry.
\end{theorem}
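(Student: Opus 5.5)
The plan follows the strategy of Bonk and Merenkov: use Bonk's uniformization theorem (Theorem \ref{theorem:bonk}) to move the problem to a round Schottky set. Then extract rigidity from the combinatorics of the peripheral circles together with the self-similarity of the carpet. Let $S_3$ denote the standard carpet and $\xi\colon S_3\to S_3$ a quasisymmetric homeomorphism.

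\textbf{Setup.} The complementary components of $S_3$ are the open squares $U_i$ together with the unbounded component. All of them are uniform quasidisks, and any two have relative distance bounded below, since squares of level $n$ are separated by at least a constant multiple of $3^{-n}$. Moreover $S_3$ has area zero. So Theorem \ref{theorem:bonk} gives a quasiconformal $f\colon\widehat\C\to\widehat\C$ with $f(S_3)=T$ a round Sierpi\'nski carpet, and $f$ is unique up to M\"obius maps.

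\textbf{Step 1: transfer $\xi$ to a conformal-type map of $T$.} Quasisymmetric maps between carpets send peripheral circles to peripheral circles, because these are exactly the non-separating Jordan curves. By the uniqueness part of Theorem \ref{theorem:bonk}, any quasisymmetric self-map of a round carpet of measure zero that extends quasiconformally is M\"obius. To get the extension, I would extend $\xi$ across each complementary square by a uniformly quasiconformal map, for instance via Beurling--Ahlfors type extensions normalized on each square. This should yield a quasiconformal homeomorphism of the sphere, since $S_3$ has measure zero and is removable for this purpose by a modulus argument. Then $f\circ\xi\circ f^{-1}$ preserves $T$, and the uniqueness shows it is M\"obius.

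\textbf{Step 2: pin down $\xi$.} The group of M\"obius maps preserving $T$ is discrete, and it should be finite, since such maps permute the finitely many peripheral circles of largest size in a suitable normalization. Hence the group of quasisymmetric self-maps of $S_3$ is finite. Next I would show that $\xi$ must fix the outer boundary square $O$. I would distinguish $O$ from the other peripheral squares by a quasisymmetrically invariant quantity, such as transboundary or carpet modulus of curve families joining a peripheral circle to itself around the others. The self-similarity of $S_3$ makes such modulus computations explicit, and $O$ and the central square $Q$ are the only ones with the relevant extremal symmetry.

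Once $\xi$ preserves $O$ and $Q$, I would pass to the "annulus" between them and use a carpet-modulus extremal problem. Its unique extremal metric is a Euclidean-type density, which forces $\xi$ to be conformal in an averaged sense. This should leave $\xi$ in the dihedral group of the square.

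\textbf{Main obstacle.} Step 2 is where I expect the difficulty: proving that $O$ and $Q$ are preserved, equivalently computing or comparing carpet moduli precisely enough to single them out. I would first try the Bonk--Merenkov approach of using the three-circle or four-circle configurations of the round model $T$ and the symmetry of $S_3$ under reflections. Failing that, I would fall back on explicit extremal-length estimates of curve families between peripheral squares.
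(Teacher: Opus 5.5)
The survey gives no proof of this theorem; it only cites \cite{BonkMerenkov:rigidity}. Your overall architecture (Bonk's uniformization, rigidity of round carpets, carpet modulus) matches that proof, but the step that carries the argument is missing.

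Everything rests on showing that every quasisymmetric self-map $\xi$ of $S_3$ satisfies $\xi(O)=O$ and $\xi(Q)=Q$. You flag this yourself as the main obstacle, but you offer only an unspecified family of ``curves joining a peripheral circle to itself around the others'' and the unproved assertion that $O$ and $Q$ have ``the relevant extremal symmetry.'' Making this work requires three things, none of which is sketched:
\begin{enumerate}
\item the quasisymmetric invariance of carpet modulus;
\item existence and uniqueness of extremal mass distributions;
\item a delicate strict comparison of carpet moduli of suitable curve families, showing that $O$ and $Q$ are singled out among all peripheral circles.
\end{enumerate}
Moreover, an invariant that singles out the \emph{pair} $\{O,Q\}$ does not exclude a map interchanging $O$ and $Q$, which you never address.

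Your endgame is not an argument either. You give no way to compute the extremal distribution for curves joining $O$ and $Q$; explicit side-length formulas are what one gets for curves joining two opposite sides of a square carpet, not for the square annulus. Also, ``conformal in an averaged sense'' does not place $\xi$ in the dihedral group. What the round model actually gives is this: once $O$ and $Q$ are fixed and $f(O),f(Q)$ are normalized to be concentric, $f\circ\xi\circ f^{-1}$ is a Euclidean rotation or reflection. These maps form a finite dihedral group containing the image of the isometry group of $S_3$. You still need a further quasisymmetric invariant showing that this group is no larger. One option is an invariant forcing corners of $O$ to go to corners: after composing with an isometry you then get a rotation or reflection fixing four points of a circle, hence the identity.

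Two supporting claims are also wrong as justified.

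In Step 2, M\"obius maps do not preserve radii, so nothing forces them to permute the largest peripheral circles of $T$. Round carpets that are limit sets of Kleinian groups have infinite M\"obius symmetry groups. So finiteness cannot come from a general size argument, and for $T$ it is a substantial part of what is to be proved. The size argument is valid only for maps fixing two peripheral circles normalized to be concentric, where they become Euclidean isometries. Thus finiteness comes after, not before, the preservation of $O$ and $Q$.

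In Step 1, measure zero does not make $S_3$ removable. Sierpi\'nski carpets are not even conformally removable (Section~\ref{section:rigidity}), so a homeomorphism of $\widehat\C$ that is $1$-quasiconformal off $S_3$ need not be quasiconformal. The extension does exist, but its proof uses the quasisymmetry of $\xi$ on $S_3$, together with uniformly quasisymmetric extensions into the squares, to control distortion at points of $S_3$ \cite{Bonk:uniformization}. Alternatively, apply the rigidity of Schottky sets \cite{BonkKleinerMerenkov:schottky} directly to the quasisymmetric map $f\circ\xi\circ f^{-1}\colon T\to T$. Note also that the reflections of $S_3$ are orientation-reversing, so in the round model you must allow anti-M\"obius maps.
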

One of the central problems in geometric group theory is to identity sets that can arise as limit sets of hyperbolic groups. A deep consequence of Theorem \ref{theorem:bonk_merenkov} is that the standard Sierpi\'nski carpet is not quasisymmetric to the limit set of any hyperbolic group. Indeed, the carpets arising as limit sets of hyperbolic groups admit infinite groups of quasisymmetric self-maps acting on them, whereas the corresponding group for the standard Sierpi\'nski carpet is finite by Theorem \ref{theorem:bonk_merenkov}. See also the relevant discussion below in Section \ref{section:gasket}.

\subsubsection{Relaxing the geometric assumptions}
The uniqueness of the map $f$ in the last sentence of the statement of Theorem \ref{theorem:bonk} follows from a work of Bonk--Kleiner--Merenkov \cite{BonkKleinerMerenkov:schottky} on the rigidity of Schottky sets. The geometry of circles is vitally used in this result and its proof involves the group generated by conformal reflections along the circles of a Schottky set. On the other hand, the existence part of Bonk's theorem does not rely on the geometry of circles. In fact, the proof can be adapted to quasiconformally uniformize the set $S$ by a set whose complementary components are squares or equilateral triangles. 

Now, regarding the necessity of Bonk's assumptions, it is obvious that \ref{theorem:bonk:1} is a necessary assumption, as in the case of Herron's theorem (Theorem \ref{theorem:herron}). However, condition \ref{theorem:bonk:2} is certainly not necessary, as there exist Schottky sets that do not satisfy it. 

It is natural to ask whether a uniformization result is true if one relaxes assumption \ref{theorem:bonk:2}. This problem is studied by the author in \cite{Ntalampekos:CarpetsThesis}*{Section 3}. It is shown that it is possible to uniformize a Sierpi\'nski carpet of area zero that satisfies a relaxed version of assumption \ref{theorem:bonk:1} by a square carpet via a homeomorphism that is quasiconformal in a generalized sense. Informally, if one relaxes the assumptions of Theorem \ref{theorem:bonk}, then a uniformizing map still exists but its regularity deteriorates. 

We present another result of the author \cite{Ntalampekos:uniformization_packing} in this direction. Quite surprisingly, we can remove entirely the geometric assumptions of Theorem \ref{theorem:bonk} and use instead a very weak summability condition, while still obtaining a uniformization result.

\begin{theorem}[\cite{Ntalampekos:uniformization_packing}]\label{theorem:packing_qc}
Let $\{U_i\}_{i\in I}$ be a collection of Jordan regions in $\widehat \C$ that have disjoint closures and satisfy
$$\sum_{i\in I} (\diam U_i)^2<\infty.$$
Then there exists a packing-quasiconformal map $f\colon \widehat \C\to \widehat \C$ that maps a Schottky set onto $S=\widehat{\C}\setminus \bigcup_{i\in I}U_i$. 
\end{theorem}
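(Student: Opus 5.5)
The plan is exhaustion by finitely many regions, uniformization of each finite configuration, and a limiting argument controlled by transboundary modulus. The summability hypothesis $\sum_i(\diam U_i)^2<\infty$ is what makes the transboundary modulus finite and gives equicontinuity.

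\emph{Finite approximations.} Enumerate $I=\N$. For each $n$, the domain $\Omega_n=\widehat\C\setminus\bigcup_{i\le n}\bar U_i$ is finitely connected. By Koebe's uniformization theorem (Theorem \ref{theorem:koebe:uniformization}) there is a conformal map $g_n$ from a circle domain $D_n$ onto $\Omega_n$. Because each $\partial U_i$ is a Jordan curve, $g_n$ extends to a homeomorphism $\bar D_n\to\bar\Omega_n$. Extending it further by homeomorphisms between the complementary disks $B_{i,n}$ and the regions $\bar U_i$ gives a homeomorphism $g_n\colon\widehat\C\to\widehat\C$. I will normalize $g_n$ by a M\"obius transformation so that it fixes three chosen points, for instance points in $U_1,U_2,U_3$, or alternatively normalize the images of three prescribed points of $S$.

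\emph{Equicontinuity via transboundary modulus.} For a curve family $\Gamma$, consider Schramm's transboundary modulus relative to the collection $\{U_i\}_{i\le n}$: admissible densities are $\rho\in L^2$ on $\Omega_n$ together with weights $\rho_i\ge0$ on the $U_i$, and the mass is $\int\rho^2+\sum\rho_i^2$. Conformality of $g_n$ on $D_n$ makes this modulus invariant under $g_n$, with the disks $B_{i,n}$ corresponding to the $U_i$. The key estimate is a uniform modulus-of-continuity bound for $g_n$ and for $g_n^{-1}$. To prove it, take two points at small distance $\delta$ and the family of curves separating them from the normalization points. On the $U$ side, use the density $\rho=1/(|z|\log(1/\delta))$ on the annulus $\delta<|z-z_0|<\sqrt\delta$, and give $U_i$ the weight $\rho_i\approx \diam U_i/(\dist\cdot\log(1/\delta))$, or more crudely weight $1$ on the finitely many large $U_i$. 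The summability $\sum(\diam U_i)^2<\infty$ makes the tail contribution small uniformly in $n$. On the disk side, a packing of round disks has $\sum(\diam B_{i,n})^2\lesssim$ area, so the same estimate is available there. Comparing lower bounds (Loewner-type, from the connectedness of the separating curves) with upper bounds then gives uniform equicontinuity of $g_n$ and $g_n^{-1}$ in the spherical metric.

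\emph{Limit.} By Arzel\`a--Ascoli, a subsequence converges uniformly to a homeomorphism $f\colon\widehat\C\to\widehat\C$. The disks $B_{i,n}$ converge, for each fixed $i$, to a closed disk or a point $B_i$. Injectivity of $f$ rules out points, because $f(B_i)=\bar U_i$ has positive diameter. The $B_i$ have disjoint interiors, and $f$ maps $\widehat\C\setminus\bigcup_i \operatorname{int} B_i$ onto $S$. Packing-quasiconformality of $f$ means a transboundary-modulus inequality $\operatorname{Mod}(f(\Gamma))\le K\operatorname{Mod}(\Gamma)$ for all curve families, and it follows from the modulus invariance at the finite level by semicontinuity of modulus under uniform limits.

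\emph{Main obstacle.} The hardest step is the uniform equicontinuity estimate near points of $S$ where infinitely many small $U_i$ accumulate with no relative-distance control. There the domain geometry degenerates, and only the $\ell^2$ summability is available. I would first try to show that the transboundary modulus of the curves joining two small continua separated by an annulus of ratio $R$ tends to $0$ as $R\to\infty$ uniformly in $n$, using the tail $\sum_{i>N}(\diam U_i)^2\to0$. A second delicate point is the limiting modulus inequality: curves may hit $\partial U_i$ at the boundary. I would handle this with Fuglede-type exceptional-family arguments.
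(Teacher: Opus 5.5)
Your central claim, uniform equicontinuity of both $g_n$ and $g_n^{-1}$ and hence a homeomorphic limit, does not hold. This is why the statement only asserts that $f$ is a \emph{uniform limit} of homeomorphisms (continuous, surjective, with connected fibers); the survey stresses that $f$ need not be a homeomorphism.

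Controlling $g_n^{-1}$ requires an \emph{upper} bound for transboundary modulus on the $U$ side, and $\sum_i(\diam U_i)^2<\infty$ gives none. Your weights $\rho_i\approx\diam U_i/(\dist(z_0,U_i)\log(1/\delta))$ have $\sum_i\rho_i^2$ governed by the localized, scale-invariant sum $\sum_i(\diam U_i/\dist(z_0,U_i))^2$. For round disks this is controlled by area packing at each dyadic scale. For thin regions the global tail does not control it at all.

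Here is a concrete example. Fix $p$, and for each $k\geq 3$ cut $\{2^{-k}<|z-p|<2^{-k+1}\}$ into $2^k$ annular sectors. Separate them by sufficiently thin angular gaps and by radial gaps of width $4^{-k}$. These are Jordan regions with disjoint closures and $\sum(\diam U_i)^2\approx\sum_k 2^{-k}<\infty$.

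Now average the admissibility condition over the radial segments from $\partial B(p,r)$ to $\partial B(p,1/4)$ that avoid the angular gaps. Each such segment meets one sector per level. Each level-$k$ sector is met by a proportion $\lesssim 2^{-k}$ of them. The segments lie in $S$ only along the radial gaps, where $\int|z-p|^{-2}\,dA\lesssim\sum_k 2^{-k}$. Cauchy--Schwarz then gives $1\leq C\|\rho\|_{L^2}+C\|(\rho_i)\|_{\ell^2}$.

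So the transboundary modulus of curves joining $\bar B(p,r)$ to $\partial B(p,1/4)$ stays bounded below independently of $r$, for all $n$ large enough. The decay estimate you plan in your last paragraph is false, and no modulus of continuity for $g_n^{-1}$ at $p$ can come out of this method.

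The summability hypothesis is used on the other side of the comparison: it gives the Loewner-type \emph{lower} bound on the $U$ side. That bound does not follow from connectedness alone. For instance, $M$ nested thin C-shaped regions of diameter $\approx 1$ separating two unit-size continua push the modulus down to $\lesssim 1/M$: put weight $1/M$ on each region and a tiny density on the slits. With summability, integrating admissibility over a one-parameter family of arcs joining two continua of diameter $\geq\epsilon$ gives $c\epsilon\leq\int\rho\,dA+\sum_i\rho_i\diam U_i$. Cauchy--Schwarz then bounds the mass below in terms of $\epsilon$ and $\sum_i(\diam U_i)^2$, uniformly in $n$.

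Combined with the upper bound on the round-disk side, this yields equicontinuity of $g_n$ only, which is all the theorem needs. The downstream steps must then be redone without injectivity:
\begin{itemize}
\item Nondegeneracy of the limit disks follows because $g_n(B_{i,n})=\bar U_i$ has fixed positive diameter, not from injectivity.
\item Packing-quasiconformality, as described in the survey, is an analytic condition on $f|_{f^{-1}(S)}$: Sobolev-type regularity plus a distortion inequality, with nothing required on $f^{-1}(U_i)$. It is not a modulus inequality obtained by ``semicontinuity''.
\end{itemize}
A natural way to obtain it is to take weak limits of the data $(|g_n'|\chi_{D_n},(\diam U_i)_{i\leq n})$. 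These satisfy $\diam g_n(\gamma)\leq\int_{\gamma\cap D_n}|g_n'|+\sum_{\gamma\cap B_{i,n}\neq\emptyset}\diam U_i$ along curves. They are bounded in $L^2\oplus\ell^2$ by the area of $\Omega_n$ and, as the second use of summability, by $\sum_i(\diam U_i)^2$. A Fuglede/Mazur argument then passes the upper-gradient and distortion inequalities to $f$.
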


We refrain from defining the notion of a packing-quasiconformal map\index{map!packing-quasiconformal}\index{packing-quasiconformal map} here. Essentially, as in the definition of classical quasiconformal maps in Section \ref{section:qc}, $f|_{f^{-1}(S)}$ is required to lie in an appropriate generalization of Sobolev spaces and to satisfy a distortion inequality analogous to \eqref{definition:qc}. No regularity is imposed in the preimages $f^{-1}(U_i)$. Moreover, $f$ is not necessarily a homeomorphism, but it is only a uniform limit of homeomorphisms; equivalently, $f$ is continuous, surjective, and the preimage of each point is connected \cite{Youngs:monotone}. The definition of packing-quasiconformal maps is natural and is motivated by various definitions of Sobolev spaces and quasiconformal maps in metric spaces that have been studied in the past three decades \cites{HeinonenKoskela:qc, Shanmugalingam:newtonian, Heinonen:metric, Williams:qc, HeinonenKoskelaShanmugalingamTyson:Sobolev, Ntalampekos:CarpetsThesis, NtalampekosRomney:length, NtalampekosRomney:nonlength}.

The motivation between the summability condition of Theorem \ref{theorem:packing_qc} originates from the study of the \textit{conformal loop ensemble carpet}\index{conformal loop ensemble}\index{CLE} (CLE), introduced by Sheffield and Werner \cite{SheffieldWerner:CLE}. This is a Sierpi\'nski carpet that arises by a random collection of Jordan curves in the unit disk that combines conformal invariance and a natural restriction property. The geometry of this random carpet is singular and its complementary components are neither quasidisks, nor do they satisfy the separation condition \ref{theorem:bonk:2}. However, Rohde and Werness \cite{RohdeWerness:CLE} announced that with probability $1$ the diameters satisfy the summability condition of Theorem \ref{theorem:packing_qc}. The first publicly available proof of this announced result is due to Doherty and Miller \cite{DohertyMiller:CLE_square}. Therefore, by Theorem \ref{theorem:packing_qc}, CLE carpets can be uniformized by Schottky sets with probability $1$; see Figure \ref{fig:cle}.

\begin{figure}
	\centering
	\begin{tikzpicture}
\node at (0,0) {\includegraphics[scale=.25]{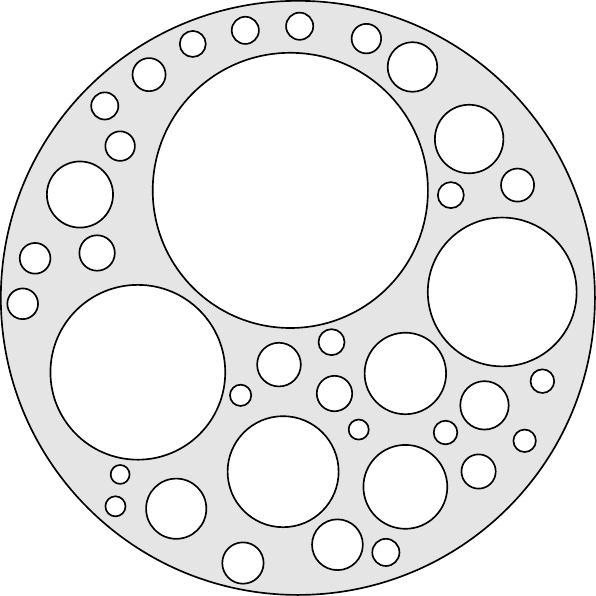}};
\draw[->] (2,0)--(3,0);
\node at (5,0) {\includegraphics[scale=0.07]{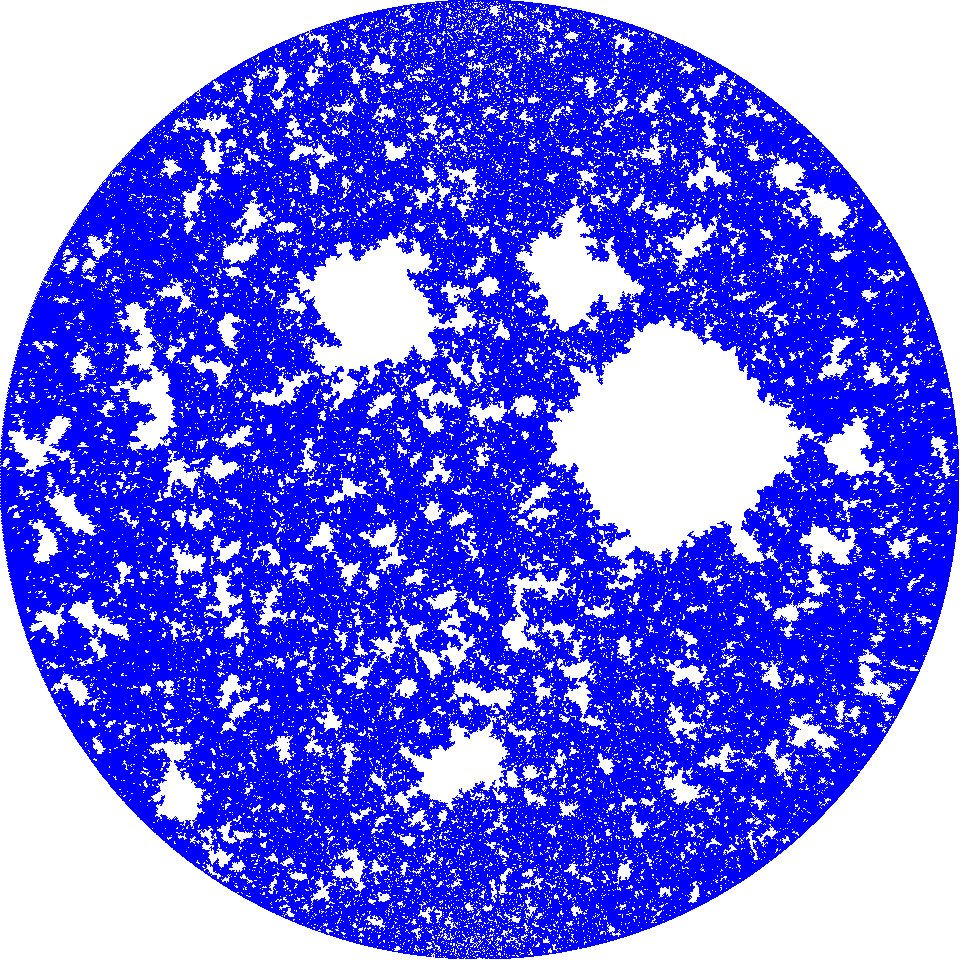}};
\end{tikzpicture}
\caption{Uniformization of a CLE carpet by a Schottky set. The simulation of the CLE carpet shown is due to D.\ Wilson.}\label{fig:cle}
\end{figure}

\subsubsection{Uniformization of Julia sets}\label{section:gasket}

We return to the problem of uniformizing sets by Schottky sets using quasiconformal maps in the classical sense. 

Bonk's theorem is amenable to uniformization results in complex dynamics because the two main assumptions in Theorem \ref{theorem:bonk} are very natural in the setting of (sub)hyperbolic dynamical systems and can be readily verified. In this direction, Bonk, Lyubich and Merenkov \cite{BonkLyubichMerenkov:carpetJulia} proved the following result.

\begin{theorem}[\cite{BonkLyubichMerenkov:carpetJulia}]\label{theorem:bonk_lyubich_merenkov}\index{Julia set}
Let $R$ be a postcritically finite rational map whose Julia set $\mathcal J(R)$ is a Sierpi\'nski carpet. Then there exists a quasiconformal homeomorphism of $\widehat \C$ that maps $\mathcal J(R)$ onto a Schottky set. 
\end{theorem}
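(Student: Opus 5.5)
The plan is to deduce the theorem from Bonk's uniformization theorem (Theorem \ref{theorem:bonk}). Since $\mathcal J(R)$ is a Sierpi\'nski carpet, its complementary components are countably many disjoint Jordan regions $U_i$ with pairwise disjoint closures. Each $U_i$ is a Fatou component. So it suffices to find a uniform $L\geq 1$ such that every $U_i$ is an $L$-quasidisk and $\Delta(U_i,U_j)\geq L^{-1}$ for $i\neq j$, with both quantities measured spherically.

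First I would reduce to finitely many model components. A postcritically finite map has no Siegel disks or Herman rings. Its periodic Fatou components are therefore superattracting basins, since a carpet Julia set rules out parabolic cycles for a postcritically finite map. By Sullivan's theorem every Fatou component is eventually periodic, so each $U_i$ is mapped by some iterate $R^n$ onto one of finitely many periodic components $V_1,\dots,V_m$. Because $\mathcal J(R)$ is a carpet, $\partial V_k$ is a Jordan curve, and $R^{p_k}\colon V_k\to V_k$ is conjugate by the B\"ottcher map to $z\mapsto z^d$. From this I would check that each $\partial V_k$ is a quasicircle; for instance, it is a quasisymmetric image of the circle because the dynamics is expanding near $\partial V_k$.

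The key tool is the subhyperbolicity of $R$. Since $R$ is postcritically finite and $\mathcal J(R)\neq\widehat\C$, there is an orbifold metric on a neighborhood of $\mathcal J(R)$ in which $R$ is uniformly expanding. Its singularities sit only at postcritical points, and these lie in the Fatou set except for points of the Julia set, which here are finitely many. I would use this to run a conformal elevator argument. Take a component $U_i$, or a pair $U_i,U_j$, and a scale comparable to $\diam U_i$. Iterate $R$ forward until the picture reaches a definite size. The relevant branch of $R^{n}$ is then univalent on a slightly larger neighborhood, or has uniformly bounded degree there because the postcritical set is finite. By Koebe distortion (or its bounded-degree analogue), shapes are distorted by a bounded amount.

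At the large scale, the image components belong to the finite list $V_1,\dots,V_m$ together with finitely many preimage levels, and so have uniformly bounded geometry. Moreover, distinct Fatou components have disjoint closures, so by compactness they are at definite distance apart at that scale. Pulling back with bounded distortion then yields both the uniform $L$-quasicircle bound for each $\partial U_i$ and the lower bound $\Delta(U_i,U_j)\geq L^{-1}$. Once these hold, Theorem \ref{theorem:bonk} applies, and Ahlfors' theorem (Theorem \ref{theorem:ahlfors_quasicircle}) converts the quasicircle bound into the quasidisk condition.

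I expect the main obstacle to be the relative-distance estimate. A pair $U_i,U_j$ may be very close relative to the smaller diameter while the two components have very different sizes. In that case the elevator must be run at the scale of the smaller component, centered near the closest points. The difficulty is ensuring that the pullback branch has bounded degree near Julia-set postcritical points, where the orbifold metric is singular. I would first try to handle this using the finiteness of the postcritical set together with a uniform bound on the local degree of iterates near $\mathcal J(R)$.
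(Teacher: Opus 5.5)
Your plan is correct and follows the route the paper indicates, which is also the one Bonk--Lyubich--Merenkov carry out: use subhyperbolic dynamics and a conformal elevator to show that the Fatou components are uniform quasidisks with relative distances uniformly bounded below, then invoke Theorem~\ref{theorem:bonk}. The bounded-degree obstacle you flag is standard, since critical points in $\mathcal J(R)$ are strictly preperiodic and hence non-recurrent, so $R$ is semi-hyperbolic and the Carleson--Jones--Yoccoz elevator has uniformly bounded degree; note also that parabolic and non-superattracting attracting cycles are ruled out by postcritical finiteness alone, not by the carpet hypothesis.
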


We direct the reader to the classical treatise \cite{Milnor:dynamics} for definitions of dynamical notions. See Figure \ref{fig:carpet} for a Sierpi\'nski carpet Julia set satisfying the assumptions of Bonk's theorem. The above theorem has some deep consequences in the rigidity of such Julia sets. For example, it is proved in \cite{BonkLyubichMerenkov:carpetJulia} that the group of quasisymmetric self-maps of $\mathcal J(R)$ is finite, and therefore, no limit set of Kleinian group can be quasisymmetrically equivalent to $\mathcal J(R)$. Extensions of these results have recently appeared in \cite{MerenkovShen:quasiregular}.

While Bonk's criterion in Theorem \ref{theorem:bonk} can be applied in several interesting and important cases, it does not allow the complementary quasidisks $U_i$, $i\in I$, to be close to each other, let alone to touch each other. For this reason a general uniformization result for sets that topologically resemble the Apollonian gasket (see Figure \ref{fig:schottky}) seemed to be out of reach, even when the complementary components $U_i$, $i\in I$, satisfy very strong geometric conditions.

Recently the author and Luo \cite{LuoNtalampekos:gasket} were able to overcome this obstacle and prove a quasiconformal uniformization result for gasket Julia sets. Motivated by the Apollonian gasket, we  give the following definition.

\begin{definition}[Gasket]\label{definition:gasket}\index{gasket}
A set $K\subset \widehat \C$ is a \textit{gasket} if it satisfies the following conditions:
\begin{enumerate}[label=\normalfont(\arabic*)]
\item\label{gasket:1} The complementary components of $K$ are countably many disjoint Jordan regions $U_i$, $i\in \N$.
\item\label{gasket:2} $K$ has empty interior.
\item\label{gasket:3} $\diam U_i\to 0$ as $i\to\infty$ (measured in the spherical metric).
\item\label{gasket:4} The boundaries of two complementary components of $K$ share at most one point. 
\item\label{gasket:5} No point of $\widehat \C$ belongs to the boundaries of three complementary components of $K$. 
\item\label{gasket:6} The contact graph\index{contact graph} corresponding to $K$, obtained by assigning a vertex to each complementary component and an edge if two components share a boundary point, is connected. 
\end{enumerate}
\end{definition}

Note that conditions \ref{gasket:1}, \ref{gasket:2}, and \ref{gasket:3} are also satisfied by Sierpi\'nski carpets. However, in the case of carpets the regions $U_i$, $i\in \N$, have disjoint closures so \ref{gasket:4} and \ref{gasket:5} are trivially satisfied, while \ref{gasket:6} fails since the contact graph has no edges. Moreover,  Schottky sets satisfy necessarily conditions \ref{gasket:4} and \ref{gasket:5} so any set that can be mapped to a Schottky set with a homeomorphism of the sphere must also satisfy these two conditions.

We say that a gasket $K=\widehat \C\setminus \bigcup_{i=1}^\infty U_i$ is \textit{fat}\index{fat gasket}\index{gasket!fat} if $U_i$ and $U_j$ are tangent to each other whenever $\bar{U_i}\cap \bar {U_j}=\emptyset$; see Figure \ref{fig:julia_gasket}. We direct the reader to \cite{LuoNtalampekos:gasket} for the precise definition of fatness. Note that fatness here is not to be confused with the notion used earlier in Section \ref{section:geometric}; see also Problem \ref{problem:schottky_topological} below for a further discussion.

We state the main theorem of \cite{LuoNtalampekos:gasket}, which characterizes gasket Julia sets that can be quasiconformally mapped to Schottky sets. 

\begin{theorem}[\cite{LuoNtalampekos:gasket}]\label{theorem:luo_ntalampekos}\index{Julia set}
Let $R$ be a rational map whose Julia set $\mathcal J(R)$ is a gasket that does not contain any critical points of $R$. The following are equivalent.
\begin{enumerate}[label=\normalfont(\arabic*)]
\item There exists a quasiconformal homeomorphism $\phi$ of $\widehat \C$ that maps $\mathcal J(R)$ onto a Schottky set.
\item $\mathcal J(R)$ is a fat gasket.
\item Every contact point is eventually mapped to a parabolic periodic point with multiplicity $3$.
\end{enumerate}
Moreover, the map $\phi$ in (1) is unique in the following strong sense: if $\psi$ is any orientation-preserving homeomorphism of $\widehat \C$ that maps $\mathcal J(R)$ onto a Schottky set, then $\psi|_{\mathcal J(R)}$ agrees with $\phi|_{\J(R)}$ up to postcomposition with a M\"obius transformation.
\end{theorem}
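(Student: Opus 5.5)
I would prove the cycle (1)$\Rightarrow$(2)$\Rightarrow$(3)$\Rightarrow$(1) and then treat uniqueness separately.

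\emph{(1)$\Rightarrow$(2).} Quasiconformal maps of $\widehat\C$ are quasisymmetric, so they preserve tangency up to bounded distortion. If two complementary components $U_i,U_j$ of $\J(R)$ touch at a point $p$, then their images are disks $\phi(U_i),\phi(U_j)$ that touch at $\phi(p)$. Two disjoint disks meeting at a point are tangent there. Blowing up at $\phi(p)$ and pulling back by $\phi^{-1}$, I would use normality of quasiconformal maps to pass to tangent maps. Any tangent configuration of $U_i,U_j$ at $p$ is then a quasiconformal image of two tangent half-planes, and this is exactly what fatness, i.e.\ ``tangency'', should mean. This is the argument sketched after Figure \ref{fig:square}.

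\emph{(2)$\Leftrightarrow$(3).} This step is purely dynamical. Since $\J(R)$ contains no critical points, $R$ is a local homeomorphism near $\J(R)$. Contact points are mapped to contact points, and there are finitely many contact-point orbits up to preimages. I would show that every contact point is eventually periodic; otherwise hyperbolicity near the orbit would let expansion separate the two tangent Fatou components. For a periodic contact point $p$, the multiplier cannot be repelling with $|R'(p)|>1$. The reason is that linearization yields a self-similar local picture in which the two components form a nonzero angle at $p$ (or spirals), which is not fat. Hence $p$ is parabolic. Fatou-flower theory then dictates the local geometry: the two touching Fatou components are attracting petals, and repelling directions lie in $\J(R)$ between them. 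Tangency of the two components forces the two attracting petals to be opposite, with exactly two repelling directions between them. This means $2$ petals, i.e.\ multiplicity $3$. Conversely, multiplicity $3$ yields cusp-free tangency via parabolic Fatou coordinates, and fatness propagates to preimages by univalent pullbacks.

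\emph{(3)$\Rightarrow$(1).} This is the main obstacle, since Bonk's Theorem \ref{theorem:bonk} fails because the relative distances are zero. My first attempt would be to construct the map explicitly.
\begin{itemize}
\item Near each parabolic contact point, use Fatou coordinates to build a local quasiconformal map that sends the two petals to two tangent disks, modelled on $z\mapsto 1/z$ applied to parallel strips.
\item Away from contact points, the Fatou components are uniform quasidisks by the subhyperbolic expansion of $R$ on compacta.
\end{itemize}
To pass from local to global, I would decompose $\J(R)$ at finitely many scales and spread the local models by the dynamics, using bounded distortion of univalent branches. An alternative is to run Schramm's transboundary modulus in the style of Bonk's proof, replacing the separation condition with the local cusp-free tangent models. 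In either version, the key estimate is a uniform bound on transboundary modulus of curve families near contact points, obtained from the parabolic model.

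\emph{Uniqueness.} I would apply a Bonk--Kleiner--Merenkov type rigidity argument to the Schottky set $\phi(\J(R))$. Let $\psi$ be any homeomorphism onto a Schottky set; the goal is to show that $\psi\circ\phi^{-1}$ restricted to the Schottky set is Möbius. The plan is to use the conformal reflection group of the circles together with the connectedness of the contact graph. Tangency at contact points and the connectedness of the contact graph rigidify the circle pattern, via a discrete He--Schramm rigidity for tangency packings, in the way the graph rigidifies the Apollonian gasket.
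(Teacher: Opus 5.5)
The survey only cites this theorem and remarks that its proof relies heavily on complex dynamics, so I am judging your proposal on its own merits. Your outline of (1)$\Rightarrow$(2) and the local parabolic analysis in (2)$\Leftrightarrow$(3) are reasonable. The gap is the hard direction (3)$\Rightarrow$(1): you give a list of strategies, and neither is developed into an argument.

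\emph{Spreading local models by the dynamics.} Local quasiconformal models at contact points cannot be spread this way. $R$ does not act on the round side, so pulling back a map that rounds finitely many Fatou components does not produce maps onto disks. Moreover, the centers and radii of the target disks are a global circle-packing datum, and no gluing of local pieces determines them.

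\emph{Bonk's scheme.} It rests on two things. The first is Koebe's theorem for the complement of finitely many quasidisks with \emph{disjoint closures}. The second is the separation $\Delta(U_i,U_j)\geq L^{-1}$, which is what makes his transboundary modulus estimates work. Both fail at contact points, where $\Delta(U_i,U_j)=0$. Saying the separation condition is ``replaced by the local tangent models'' names the difficulty but formulates neither an estimate nor a finite-stage uniformization.

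\emph{What is missing.}
(a) Quantitative control, uniform over all pairs of Fatou components, touching or not, extracted from the dynamics. This means multiplicity-$3$ Fatou coordinates near the parabolic cycles, where bounded distortion of inverse branches is unavailable because the postcritical set accumulates there, and bounded distortion elsewhere.
(b) A global uniformization theorem that tolerates tangencies. For instance, Theorem \ref{theorem:ntalampekos_schottky} reduces (3)$\Rightarrow$(1) to uniform pairwise quasiconformal circularizability. Touching pairs could then be handled by the criterion of Theorem \ref{theorem:ntalampekos_tangent} after moving the contact point to $\infty$.

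\emph{A smaller point in (2)$\Rightarrow$(3).} Eventual periodicity of contact points comes from combinatorics, not expansion. $R$ is a local homeomorphism near $\mathcal J(R)$ and Fatou boundaries are Jordan curves, so images of two touching components stay distinct and touching. Eventually both lie among the finitely many periodic components. Two components share at most one boundary point, so the image of the contact point is then periodic.

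\emph{Uniqueness.} This argument also fails as stated. Since $\psi$ is merely a homeomorphism, Bonk--Kleiner--Merenkov rigidity does not apply to $\psi\circ\phi^{-1}$: it concerns \emph{quasisymmetric} maps between Schottky sets of measure zero, and knowing that this map is quasisymmetric is the whole issue.

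The contact graph does not rigidify the packing combinatorially either. The Apollonian recursion works because each new circle is inscribed in a triangular interstice. A round gasket may instead have interstices bounded by cycles of four or more circles at every stage, and such cycles carry continuous M\"obius moduli. The He--Schramm rigidity theorems for infinite packings assume a nerve triangulating a surface and a small residual set (countable, or of $\sigma$-finite length). Neither holds for gaskets in general.

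Rigidity of Schottky sets is an analytic phenomenon. A Sibner-type deformation supported on a Schottky set of positive area yields non-M\"obius homeomorphisms onto Schottky sets (compare the discussion after Proposition \ref{proposition:positive}). So a proof must exploit specific features of $\mathcal J(R)$, such as zero area and the density of contact points in every Fatou boundary (they lie in backward orbits of the parabolic cycles). It also needs a mechanism that upgrades an arbitrary topological uniformization $\psi$, and your plan contains none.
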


\begin{figure}
	\centering
	\includegraphics[scale=0.3]{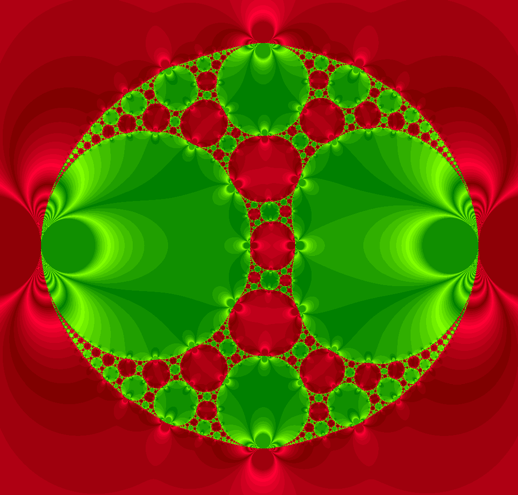}
	\caption{A fat gasket Julia set.}\label{fig:julia_gasket}
\end{figure}

We direct the reader to the referenced paper for the definitions of the various complex dynamical notions. This result relies heavily on the structure of Julia sets and on methods from complex dynamics, which do not generalize to arbitrary gaskets. Nonetheless, it serves as a motivation for the general result presented in the next section. 

\subsection{Characterization of Schottky sets}
Let $\{U_i\}_{i\in I}$ be a collection of disjoint Jordan regions in $\widehat \C$. We observe that in Bonk's theorem (Theorem \ref{theorem:bonk}) one imposes geometric assumptions on pairs of regions $U_i,U_j$ in order to obtain a quasiconformal uniformization result. Although this is less obvious, a similar philosophy applies to Theorem \ref{theorem:luo_ntalampekos}, where a tangency condition is imposed between pairs of regions $U_i,U_j$ that touch each other.  Motivated by these results we give the following definition.

\begin{definition}[Circularizable regions]\label{definition:circularizable}
Let $\{U_i\}_{i\in I}$ be a collection of disjoint Jordan regions in $\widehat \C$. We say that the regions $U_i$, $i\in I$, are \textit{uniformly pairwise quasiconformally circularizable}\index{quasiconformally circularizable regions} if there exists $K\geq1$ such that for every  $i,j\in I$ there exists a $K$-quasiconformal homeomorphism of $\widehat \C$ that maps $U_i$ and $U_j$ to disks with distinct boundaries.
\end{definition}

Note that if the regions $U_i$, $i\in I$, are $L$-quasidisks and $\Delta(U_i,U_j)\geq L^{-1}$ for $i\neq j$, then they are also uniformly pairwise quasiconformally circularizable by Herron's theorem (Theorem \ref{theorem:herron}). Thus, the condition of Definition \ref{definition:circularizable} is strictly weaker than the assumptions in Bonk's theorem (Theorem \ref{theorem:bonk}). 

It is natural to ask whether the above pairwise circularization condition is sufficient to imply quasiconformal uniformization by a Schottky set. The next result of the author \cite{Ntalampekos:schottky} provides an affirmative answer.

\begin{theorem}[\cite{Ntalampekos:schottky}]\label{theorem:ntalampekos_schottky}
Let $\{U_i\}_{i\in I}$ be a collection of disjoint Jordan regions in $\widehat \C$ that have distinct boundaries. The following are quantitatively equivalent.
\begin{enumerate}[label=\normalfont(\arabic*)]
\item The regions $U_i$, $i\in I$, are uniformly pairwise quasiconformally circularizable. 
\item There exists a quasiconformal homeomorphism of $\widehat \C$ that maps the set $S=\widehat \C\setminus \bigcup_{i\in I}U_i$ onto a Schottky set and is $1$-quasiconformal on $S$. 
\end{enumerate}
In this case the map $f|_S$ is unique up to postcomposition with M\"obius transformations.
\end{theorem}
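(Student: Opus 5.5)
The direction (2)$\Rightarrow$(1) is immediate. If $f$ is $K$-quasiconformal and maps $S$ onto a Schottky set, then for any $i,j$ the map $f$ itself sends $U_i$ and $U_j$ to disks. These disks have distinct boundaries because $\partial U_i\neq\partial U_j$ and $f$ is a homeomorphism. So (1) holds with the same $K$.

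For (1)$\Rightarrow$(2) I would follow the exhaustion strategy behind Bonk's theorem (Theorem \ref{theorem:bonk}), using Schramm's transboundary modulus.

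\emph{Step 1: finite case.} Fix a finite subcollection $U_1,\dots,U_n$. By Koebe's uniformization theorem (Theorem \ref{theorem:koebe:uniformization}) applied to $\widehat\C\setminus\bigcup_{i\le n}\bar U_i$, perturbed if closures touch, there is a homeomorphism $f_n$ of $\widehat\C$ that is conformal off $\bigcup U_i$ and maps each $U_i$ to a disk. The task is to show $f_n$ is $K'$-quasiconformal with $K'=K'(K)$ independent of $n$. Once that holds, normalize at three points of $S$ and use normality of quasiconformal maps to extract a limit $f$. This $f$ maps each $U_i$ to a disk (limits of disks are disks or points, and points are excluded by equicontinuity of the inverses) and is conformal on the interior of $S$. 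To get $1$-quasiconformality on all of $S$, including $S$ of positive area, I would construct $f_n$ as an extremal or transboundary-conformal map: the transboundary modulus of curve families is preserved for families in $S$. This property passes to the limit and forces $\|Df\|^2=J_f$ a.e.\ on $S$.

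\emph{Step 2: the uniform distortion estimate, which I expect to be the main obstacle.} I would verify the metric definition of quasiconformality via transboundary modulus. Take an annulus $A(z,r,2r)$ and compare the transboundary modulus of the curve family separating its boundary circles before and after $f_n$. On the circle-domain side, transboundary modulus is comparable to classical modulus because disks are fat. On the original side, the difficulty is that the $U_i$ need not be fat uniformly in a way compatible with their configuration: they may touch or nearly touch, as in gaskets. Here the pairwise hypothesis must be used. For each pair $U_i,U_j$ meeting a given annulus, a $K$-quasiconformal map $\phi_{ij}$ straightens both to disks. It transfers the two-disk estimates (comparability of modulus near a point where two disks are tangent or close) back to $U_i,U_j$ with controlled loss. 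The hard part is localizing: near any point only boundedly many ``large'' components interact at a given scale. I would try to prove that a curve family's transboundary modulus is governed, up to constants, by pairwise interactions plus one-region quasidisk estimates. Each $U_i$ is a $C(K)$-quasidisk by Ahlfors' theorem (Theorem \ref{theorem:ahlfors_quasicircle}), hence fat. Then apply $\phi_{ij}$ to each interacting pair.

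\emph{Step 3: uniqueness.} If $f,g$ both satisfy (2), then $h=g\circ f^{-1}$ is quasiconformal, maps a Schottky set to a Schottky set, and is $1$-quasiconformal on it. I would reflect across the circles (Bonk--Kleiner--Merenkov style), extending $h$ through the Schottky group generated by reflections. Composing with inversions on complementary disks yields a map that is $1$-quasiconformal a.e.\ on the whole sphere, hence Möbius. When the disks are tangent, the reflection group argument needs the tangency-point version, as in the gasket uniqueness of Theorem \ref{theorem:luo_ntalampekos}. The dependence of constants throughout yields the quantitative equivalence.
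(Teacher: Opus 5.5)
\textbf{The direction (2)$\Rightarrow$(1) is correct.} Your outline for (1)$\Rightarrow$(2) also uses the ingredients the paper names: transboundary modulus, Bonk-type finite approximation, and reflection groups. However, there is a genuine gap.

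\textbf{The main gap is Step~2.} The uniform distortion bound for $f_n$ carries the whole theorem, and you do not supply it. Worse, the mechanism you sketch fails in exactly the configurations the new hypothesis is designed for. Suppose two image disks $D,D'$ are tangent at $p$, and take continua $E,F\subset B(p,r_0/2)$ in the two cusps between them. Every curve joining $E$ to $F$ either meets $\bar D\cup\bar D'$ or crosses the annulus $A(p,r_0/2,r_0)$. Putting weight $1$ on each disk plus a fixed classical term on the annulus shows that the transboundary modulus of this family is bounded by an absolute constant. Meanwhile $\Delta(E,F)$ can be arbitrarily small, and the same persists for nearly tangent disks.

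Two things follow. First, transboundary modulus is \emph{not} comparable to classical modulus on the circle-domain side, despite fatness. Second, the Loewner-type lower bound that relative separation provides in Bonk's setting (transboundary modulus $\to\infty$ as $\Delta(E,F)\to0$) is unavailable. That bound is what converts modulus control into roundness of $f_n(B(z,r))$.

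The maps $\phi_{ij}$ do not rescue this. They are unrelated to $f_n$ and to each other, so transporting two-disk estimates through them says nothing about $f_n$. What must be proved is that pairwise circularizability forces estimates on transboundary-modulus quantities of the \emph{whole} configuration. Those are the only quantities $f_n$ preserves, and the estimates must be strong enough to give uniform quasisymmetry. This is the improvement of Bonk's estimates that the paper relies on, and your sketch does not provide it. Counting large components per scale is true by fatness but beside the point, since nearly tangent pairs occur at every scale.

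\textbf{Step~1.} If closures touch, $\widehat\C\setminus\bigcup_{i\le n}\bar U_i$ can be disconnected, already for three pairwise touching regions. So Koebe's theorem does not apply as stated. A perturbation would have to preserve uniform pairwise circularizability and let the tangencies reappear in the limit; you address neither. On the other hand, no extremal construction is needed for $1$-quasiconformality on $S$. The quasicircles $\partial U_i$ have area zero, so $\bar\partial f_n=0$ a.e.\ on $S$, and this linear condition survives weak $W^{1,2}$ convergence.

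\textbf{Step~3.} The reflection extension of $h$ is conformal a.e.\ only on the orbit of $T$ under the reflection group. Nothing is known on the complementary limit set. When $S$ has area zero, this orbit is null and the hypothesis of $1$-quasiconformality on $S$ is vacuous. The uniqueness claim is then at least as strong as the Bonk--Kleiner--Merenkov rigidity of measure-zero Schottky sets, extended to tangent circles. So ``a map that is $1$-quasiconformal a.e.\ on the whole sphere'' is the hard part, not a consequence. It requires a blow-up argument at points of the limit set, using that the extension maps the nested disks of the orbit to disks. With tangencies, you must also check that the extension across the parabolic fixed points is still a quasiconformal homeomorphism.
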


This theorem provides a quasiconformal characterization of Schottky sets and yields Bonk's theorem as a corollary. The proof relies on the transboundary modulus of Schramm, on improving transboundary modulus estimates of Bonk, and on properties of groups generated by reflections on circles, known as Schottky groups. 

One of the difficulties in the proof is that there is no known \textit{topological} characterization of Schottky sets. We list three topological consequences of the circularization assumption of Definition \ref{definition:circularizable}; compare to the definitions of a Sierpi\'nski carpet in Section \ref{section:early} and of a gasket in Section \ref{section:gasket}.
\begin{enumerate}[label=\normalfont(\arabic*)]
\item $\bar {U_i}\cap \bar {U_j}$ contains at most one point for $i\neq j$. This is because there exists a homeomorphism of $\widehat \C$ mapping $U_i$ and $U_j$ to disks with distinct boundaries.  
\item $\bar {U_i}\cap \bar {U_j}\cap \bar {U_k}=\emptyset$ for every triple of distinct indices $i,j,k\in I$. This is more subtle and follows form the geometry of quasicircles and the characterization of Ahlfors in Theorem \ref{theorem:ahlfors_quasicircle}; see \cite{Ntalampekos:schottky}*{Lemma 3.2}.
\item For each $\varepsilon>0$ there exist at most finitely many $i\in I$ such that $\diam U_i>\varepsilon$. This follows from the fact that the regions $U_i$, $i\in I$, are uniform quasidisks, so $(\diam U_i)^2$ is bounded by a uniform constant times the area of $U_i$; see the discussion on fat sets in Section \ref{section:cofat}.
\end{enumerate}
\begin{problem}\label{problem:schottky_topological}
If the Jordan regions $\{U_i\}_{i\in I}$ satisfy the above three conditions, does there exist a homeomorphism of $\widehat \C$ that maps the set $S=\widehat \C\setminus \bigcup_{i\in I}U_i$ onto a Schottky set?
\end{problem}
An affirmative answer is only known in the case that $\bar U_i\cap \bar U_j=\emptyset$ for $i\neq j$ and is due to Whyburn \cite{Whyburn:theorem}. The general case is expected to be much harder, since strong rigidity properties come into effect when we allow $\bar {U_i}\cap \bar {U_j}$ to be non-empty. In fact, in this case there might exist a \textit{unique} homeomorphism (up to M\"obius transformations) from $S$ onto a Schottky set, as illustrated by the last part of Theorem \ref{theorem:luo_ntalampekos}.

What remains to be done, in order to give a complete geometric characterization of sets that are quasiconformally equivalent to Schottky sets, is to characterize geometrically pairs of quasidisks that can be quasiconformally mapped to pairs of disks. Therefore, we are led back to Problem \ref{problem:annuli}, a solution of which is presented in the next section.

\subsection{Quasiconformal annuli revisited}\label{section:annuli_revisit}

If $U$ is a domain in $\widehat \C$ we denote by $U^*$ the complement of $\bar U$. If $\partial U$ has at least three points, then we denote by $h_U$ the hyperbolic metric\index{hyperbolic metric}\index{metric!hyperbolic} on $U$. 

Let $U,V\subset \widehat \C$ be Jordan regions such that $\bar U\cap \bar V$ contains at most one point. Following \cite{Ntalampekos:tangent} we define the \textit{relative hyperbolic metric}\index{metric!relative hyperbolic}\index{relative hyperbolic metric} of the pair $(V,U)$ as
$$d_{V,U}=\inf_\gamma \ell_{h_{U^*}}(\gamma),\quad z,w\in \partial V\setminus \bar U,$$ 
where the infimum is taken over all curves $\gamma$ in $U^*\setminus V$ that connect $z$ and $w$; see Figure \ref{fig:relative_hyperbolic}. We note that if $V$ is a quasidisk then $d_{V,U}$ is a metric on $\partial V\setminus \bar U$.

\begin{figure}
	\centering
	\includegraphics[scale=1]{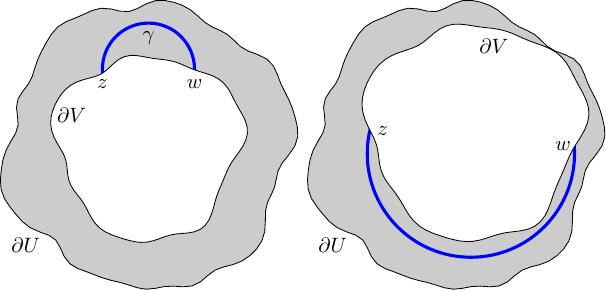}
	\caption{A curve $\gamma$ in $U^*\setminus V$ connecting $z,w\in \partial V\setminus \bar U$. Here $U$ is the unbounded region and $V$ is the bounded white region.}\label{fig:relative_hyperbolic}
\end{figure}

The metric $d_{V,U}$ can be explicitly computed when $U,V$ are disjoint disks. Specifically, if $V$ and $U^*$ are concentric disks in $\C$, then for all $z,w\in \partial V$ we have
$$C^{-1}\frac{|z-w|}{\dist(\partial U,\partial V)}\leq d_{V,U}(z,w) \leq C\frac{|z-w|}{\dist(\partial U,\partial V)},$$
where $C\geq 1$ is a uniform constant. Also, if $U,V$ are parallel half-planes (that is, disks tangent at $\infty$), then for all $z,w\in \partial V\setminus \bar U$ we have
$$d_{V,U}(z,w) =\frac{|z-w|}{\dist(\partial U,\partial V)}.$$
Thus, in these two model cases the metric $d_{V,U}$ is essentially a multiple of the Euclidean distance. 

The next two results from \cite{Ntalampekos:tangent} present the exact relation that is required between $d_{V,U}$ and the Euclidean distance so that a pair of disjoint quasidisks $U,V$ can be mapped to a pair of disks with a quasiconformal homeomorphism of $\widehat \C$. 

The first result concerns quasidisks $U,V$ that are ``tangent" to each other. By normalizing with M\"obius transformations, we assume that the point of ``tangency" is at $\infty$. Thus, we assume that $U,V\subset \C$ are unbounded quasidisks in the topology of $\C$. In the next statement we use the topology of $\C$. 

\begin{theorem}[\cite{Ntalampekos:tangent}]\label{theorem:ntalampekos_tangent}
Let $U,V\subset \C$ be unbounded quasidisks such that $\bar U\cap \bar V=\emptyset$ (in $\C$). There exists a quasiconformal map $f\colon \C\to \C$ that maps $U$ and $V$ to half-planes if and only if the identity map
$$\id\colon (\partial V,d_{V,U})\to (\partial V,|\cdot|)$$
is quasisymmetric. The statement is quantitative.
\end{theorem}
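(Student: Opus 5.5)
Necessity is the easy direction. Suppose $f\colon\C\to\C$ is quasiconformal and maps $U,V$ to half-planes $f(U),f(V)$; these are necessarily parallel, since their closures are disjoint in $\C$.

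First, the relative hyperbolic metric is quasi-invariant under $f$. On any fixed scale, $f|_{U^*}$ is a quasiconformal map between hyperbolic domains, hence a quasi-isometry of the hyperbolic metrics for large distances. Since the infimum defining $d_{V,U}$ runs over curves in $U^*\setminus V$, we get $d_{f(V),f(U)}(f(z),f(w))\simeq d_{V,U}(z,w)$ when this quantity is $\gtrsim 1$. Small scales need separate care. There we use that $\partial V$ is a quasicircle and that $d_{V,U}(z,w)$ is comparable to $|z-w|/\dist(z,\partial U)$. I would prove this comparability by a Whitney-type argument: $h_{U^*}$ is comparable to the quasihyperbolic metric of $U^*$, because $U^*$ is a quasidisk.

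Second, in the model case of parallel half-planes, $d_{f(V),f(U)}$ equals a constant times the Euclidean distance. Finally, $f|_{\partial V}$ is quasisymmetric, because $f$ is quasisymmetric on $\C$. Composing these three facts gives that $\id\colon(\partial V,d_{V,U})\to(\partial V,|\cdot|)$ is quasisymmetric. The quasi-invariance holds only up to additive errors, but this suffices: both metrics are comparable on small scales through the quasicircle property, and $d_{V,U}$ is quasi-isometric on large scales. I will need to check that this combination produces an $\eta$-quasisymmetry and not only a rough one.

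Sufficiency is the real content, and I would handle it in four steps.

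Step 1. Let $\phi\colon V\to H_+=\{\operatorname{Im} z>0\}$ and $\psi\colon U\to H_-=\{\operatorname{Im} z<-1\}$ be quasiconformal maps of the plane, normalized so that $\infty$ is fixed; these exist by Ahlfors' theorem (Theorem \ref{theorem:ahlfors_quasicircle}). It is enough to construct a quasiconformal map of the strip region $\Omega=\C\setminus(\bar U\cup\bar V)$ onto the strip $S=\{-1<\operatorname{Im} z<0\}$ whose boundary values on $\partial V$ and $\partial U$ are quasisymmetric onto $\R$ and $\R-i$ respectively. We then extend across $\partial V$ and $\partial U$ by Beurling--Ahlfors extensions into the half-planes and glue, using removability of quasicircles for quasiconformal maps.

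Step 2. Construct the boundary map on $\partial V$. Parametrize $\partial V$ by arclength in the metric $d_{V,U}$, which gives a homeomorphism $g\colon\partial V\to\R$. The hypothesis makes $g$ quasisymmetric with respect to $|\cdot|$ on $\partial V$, with the caveat that $d_{V,U}$ must be shown to be quasiconvex or comparable to a length metric. Since $V$ is a quasidisk, this should follow from the hyperbolic geometry of $U^*$.

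Step 3. Construct a compatible boundary map on $\partial U$. Use the nearest-point projection in the hyperbolic metric of $U^*$, or more concretely hyperbolic geodesics in $\Omega$ crossing from $\partial V$ to $\partial U$. Two points of $\partial V$ at $d_{V,U}$-distance $1$ correspond to a ``square'' of $\Omega$ of bounded shape. This yields a decomposition of $\Omega$ into quasi-squares $Q_n$, $n\in\mathbb Z$, with uniformly bounded geometry, which mirror the unit squares of the strip $S$.

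Step 4. Map each $Q_n$ to the square $[n,n+1]\times[-1,0]$ by uniformly quasiconformal maps matching along the common sides. The matching follows, for instance, from the fact that each $Q_n$ is a uniform quasidisk with sides that are uniform quasiarcs.

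I expect the main obstacle to be this square decomposition of $\Omega$. One must show that quasisymmetry of $\id$ forces the ``vertical'' crosscuts to have bounded length relative to the local width. In particular, near $\infty$ the widths of $\Omega$ on the two sides may differ drastically, and the relative metric $d_{V,U}$ is designed precisely to normalize these widths. The symmetric roles of $U$ and $V$ in the decomposition must also be reconciled, namely that $d_{U,V}$ on $\partial U$ is then automatically quasisymmetric. I would try first to prove the comparability $d_{V,U}(z,w)\simeq |z-w|/\dist(z,\partial U)$ for nearby points $z,w$, and to deduce that the widths of adjacent squares are comparable.
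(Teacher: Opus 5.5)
Your necessity argument is sound in outline, and the reduction in Step~1 is fine. Passing from the quasi-isometry of hyperbolic metrics to the inner metric $d_{V,U}$ needs a chaining argument, and your small-scale and large-scale estimates do combine into an $\eta$-quasisymmetry. The sufficiency direction, however, has a genuine gap: Step~3 is not a step of the proof but essentially the theorem itself.

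\emph{What Step~3 must deliver.} You need pairwise disjoint crosscuts $\sigma_n$ of $\Omega$, running from points $z_n\in\partial V$ with $d_{V,U}(z_n,z_{n+1})\simeq 1$ to $\partial U$. They must be uniform quasiarcs, they must cut $\Omega$ into uniform quasidisks $Q_n$ with uniformly non-degenerate corners, and the induced parametrization of $\partial V$ must be globally quasisymmetric. Producing such a family is essentially equivalent to the conclusion, and you give no construction.

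\emph{Why your suggested mechanisms do not supply it.} There is no nearest-point projection onto $\partial U$ for $h_{U^*}$, since $\partial U$ is at infinite distance. Crosscuts chosen by a local rule at each $z_n$ fail even when the hypothesis holds. For instance, take $U=\{\operatorname{Im} z<0\}$, so that $h_{U^*}=|dz|/\operatorname{Im} z$, and take $V=\{\operatorname{Im} z>1\}\setminus([0,1]\times[1,2])$. The hypothesis holds here with a fixed $\eta$.
\begin{enumerate}
\item The downward vertical ray from a point of the left wall of the bump runs along $\partial V$.
\item The hyperbolic geodesic orthogonal to the left wall at $(0,y)$ with $y>\sqrt2$ ends on the right wall, not on $\R$.
\item For $1<y,y'<\sqrt 2$, the geodesics orthogonal to the two walls at $(0,y)$ and $(1,y')$ cross inside $\Omega$.
\end{enumerate}
So the crosscuts must be organized globally. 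You must show this can be done with uniformly bounded geometry and with globally quasisymmetric boundary values on $\partial V$, and that is exactly where the hypothesis has to enter; the proposal does not say how. The comparability of widths of adjacent pieces, which you plan to prove first, is the easy local part and does not address this.

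\emph{Step~2 is incorrect as stated.} Koebe's estimate gives $d_{V,U}(z,w)\gtrsim |z-w|/\dist(z,\partial U)$ for nearby $z,w\in\partial V$. Hence if $\partial V$ is a non-rectifiable quasicircle, every subarc has infinite $d_{V,U}$-length. Moreover, $d_{V,U}|_{\partial V}$ cannot be comparable to a length metric, since that would make $\partial V$ locally chord-arc; no property of quasidisks repairs this. You must use a discrete parametrization instead: points $z_n$ as above, with the arc from $z_n$ to $z_{n+1}$ mapped onto $[n,n+1]$. Its quasisymmetry requires large-scale control of $d_{V,U}$ along $\partial V$, for example that $n\mapsto z_n$ is a quasi-isometry, i.e., that $\Omega$ offers no shortcuts. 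This must be proved from the geometry of $\Omega$. Quasisymmetry of $\id$ alone does not give it, because a metric quasisymmetric to a line can be a snowflake at large scales.

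\emph{Your concern about $\partial U$ is unnecessary.} Suppose $G$ is a quasiconformal map of $U^*$ onto a half-plane $H$ with $G(V)$ a half-plane. Let $\Phi\colon U^*\to H$ be the Riemann map; it extends quasiconformally to $\widehat\C$ because $U^*$ is a quasidisk. Reflect the quasiconformal self-map $G\circ\Phi^{-1}$ of $H$ across $\partial H$ and compose with the extension of $\Phi$. This extends $G$ quasiconformally to all of $\widehat\C$. So only the boundary values on $\partial V$ need control, and the condition for $d_{U,V}$ on $\partial U$ is a consequence rather than an input.
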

Recall the definition of a quasisymmetric map\index{quasisymmetric map}\index{map!quasisymmetric} from Section \ref{section:cospread}. The statement is quantitative in the sense that if the regions $U,V$ are $L$-quasidisks and the map $\id\colon (\partial V,d_{V,U})\to (\partial V,|\cdot|)$ is $\eta$-quasisymmetric, then there exists a $K(L,\eta)$-quasiconformal homeomorphism of $\C$ that maps $U$ and $V$ to half-planes. Conversely, if $U,V$ are $L$-quasidisks and there exists a $K$-quasiconformal homeomorphism as above, then the map $\id\colon (\partial V,d_{V,U})\to (\partial V,|\cdot|)$ is $\eta$-quasisymmetric for some distortion function $\eta$ that depends only on $K$ and $L$.

Next, we state the corresponding result for quasidisks with disjoint closures. Here we use the topology of the sphere $\widehat \C$. The statement is quantitative in the same sense as above.

\begin{theorem}[\cite{Ntalampekos:tangent}]\label{theorem:ntalampekos_disjoint}
Let $U,V\subset \widehat{\C}$ be quasidisks such that $\bar U\cap \bar V=\emptyset$. There exists a quasiconformal map $f\colon \widehat \C\to \widehat{\C}$ that maps $U$ and $V$ to disks if and only if the identity map
$$\id\colon (\partial V,d_{V,U})\to (\partial V,\chi)$$
is quasi-M\"obius. The statement is quantitative. 
\end{theorem}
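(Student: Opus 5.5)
Necessity is a direct computation, and sufficiency I would obtain by reducing to the tangent case, Theorem \ref{theorem:ntalampekos_tangent}.

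\emph{Necessity.} Suppose $f$ is $K$-quasiconformal and maps $U,V$ onto disks $U',V'$. After a Möbius normalization we may take $V'$ and $U'^*$ to be concentric disks. By the explicit formula above, $d_{V',U'}$ is then comparable to $|z-w|/\dist(\partial U',\partial V')$, so the identity $(\partial V',d_{V',U'})\to(\partial V',\chi)$ is quasi-Möbius with universal data. Next we compare $d_{V,U}$ with the pullback of $d_{V',U'}$ under $f$. Since $f$ is quasiconformal on $U^*$, it is quasi-isometric up to additive constants for the hyperbolic metrics on scales $\gtrsim 1$; this follows from standard modulus estimates, or by precomposing with Riemann maps and using that quasiconformal maps of the disk are rough quasi-isometries of the hyperbolic plane. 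Near $\partial V$ the quasidisk geometry forces $\ell_{h_{U^*}}$ of short curves in $U^*\setminus V$ to be controlled. Hence $d_{V,U}$ and $f^*d_{V',U'}$ are related by a quasisymmetry (for small scales) and a rough quasi-isometry (large scales). Because $\partial V$ is compact and $d_{V,U}$ is a genuine path metric there, this gives quasi-Möbius equivalence. Finally, $f|_{\partial V}$ is quasi-Möbius for $\chi$, being the restriction of a quasiconformal map of the sphere. Composing these maps proves the forward direction quantitatively.

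\emph{Sufficiency.} This is the main obstacle, and here the plan is to reduce to the tangent case. Pick points $p\in\partial U$ and $q\in\partial V$ that nearly realize $\dist(U,V)$. Since $U$ and $V$ are quasidisks with disjoint closures, there is a quasiconformal change of the sphere, supported near a quasi-geodesic arc $\alpha$ from $p$ to $q$ in $(U\cup V)^*$, that collapses nothing but lets us treat $U\cup \alpha$ and $V$ as a configuration to which Theorem \ref{theorem:ntalampekos_tangent} applies after inverting at a point of $\alpha$. An alternative I would try first is more direct. Take the conformal map $\phi$ from the ring domain $A=(U\cup V)^*$ onto a round annulus $\{1<|z|<R\}$. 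Show that $\phi$ extends to quasisymmetric boundary maps $\partial V\to\{|z|=1\}$ and $\partial U\to\{|z|=R\}$, with data controlled by the quasi-Möbius hypothesis; the hypothesis is exactly what identifies the boundary metric near $\partial V$ with the hyperbolic-type metric seen from inside $A$. Then extend $\phi$ across $V$ and $U$ by Beurling--Ahlfors (equivalently, via Ahlfors' Theorem \ref{theorem:ahlfors_quasicircle}) to get a global quasiconformal map.

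The key estimate is therefore that $\phi|_{\partial V}$ is quasisymmetric. To prove it, I would compare the hyperbolic metric of $A$ near $\partial V$ with $h_{U^*}$ restricted to $U^*\setminus V$. Near $\partial V$, these differ by a bounded factor away from $U$. In the round annulus, $d_{V',U'}$ is a constant multiple of the chordal metric (the concentric computation). Hence $\phi$ transports $d_{V,U}$ to a metric bi-Lipschitz to the chordal one on the unit circle, up to constants depending on $\log R$. The hypothesis that $\id$ is quasi-Möbius then gives that $\phi|_{\partial V}$ is quasi-Möbius, and therefore quasisymmetric with respect to three normalized points. The boundary $\partial U$ is handled by the symmetric argument; note that the quasi-Möbius condition is symmetric in $U,V$ because quasi-Möbius maps compose. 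The delicate points are uniformity when $R$ is close to $1$, where the annulus is thin and relative distance is small, and keeping all constants independent of $R$. For this I expect to need the explicit two-sided comparison between $d_{V,U}$ and $|z-w|/\dist(\partial U,\partial V)$.
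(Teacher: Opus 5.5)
The sufficiency direction has a genuine gap. Your second route does correctly reduce sufficiency to one estimate: the conformal map $\phi$ of $A=(U\cup V)^*$ onto $\{1<|z|<R\}$ must be quasisymmetric on $\partial V$, with data depending only on $L$ and $\eta$. Beurling--Ahlfors extensions then glue across the removable quasicircles. But that estimate carries the whole weight of the theorem (it is essentially equivalent to circularizability), and your argument for it fails for three reasons.

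\begin{enumerate}[label=\normalfont(\arabic*)]
\item The comparison you invoke is false. The hyperbolic density of $A$ tends to $\infty$ at $\partial V$, whereas that of $U^*$ stays bounded near $\partial V$.
\item $\phi$ is conformal only on $A$, not on $U^*=A\cup\bar V$. So there is no reason for it to carry $h_{U^*}$, which defines $d_{V,U}$, to the hyperbolic metric of $\{|z|<R\}$, which defines $d_{V',U'}$.
\item The conclusion you draw is false even when the hypothesis holds. Let $U=\widehat\C\setminus\bar B(0,\rho)$ with $\rho$ large, and let $V$ be a von Koch snowflake domain of diameter $1$ near $0$. Then $d_{V,U}$ is comparable to $|z-w|/\rho$ on $\partial V$. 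But $\partial V$ is not bi-Lipschitz to a circle, since its Hausdorff dimension exceeds $1$.
\end{enumerate}

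At most you can hope for a quasi-M\"obius comparison, and it must be uniform in $R$, which is not controlled by $L$ and $\eta$. Constants depending on $\log R$ destroy the quantitative statement. The concentric-disk formula cannot supply this uniformity. For general $U,V$ the metric $d_{V,U}$ is \emph{not} comparable to $|z-w|/\dist(\partial U,\partial V)$; it is designed precisely to record how the gap between $\partial U$ and $\partial V$ varies along $\partial V$ (compare Theorem \ref{theorem:ntalampekos_lipschitz}). What is missing is an estimate, uniform in the modulus of $A$, tying $d_{V,U}$ to conformal invariants of $A$, for instance moduli of curve families in $A$ joining arcs of $\partial V$ to $\partial U$.

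Your first route does not help either. $U\cup\alpha$ is not a Jordan region, let alone a quasidisk. Inverting at a point of $\alpha$ does not produce two unbounded quasidisks with closures disjoint in $\C$, which is what Theorem \ref{theorem:ntalampekos_tangent} requires.

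Two smaller points.

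Handling $\partial U$ ``by symmetry'' is unjustified. The hypothesis concerns only $(\partial V,d_{V,U})$, and composing quasi-M\"obius maps yields nothing about $(\partial U,d_{U,V})$. This is easy to repair once $\phi|_{\partial V}$ is controlled. Extend $\phi$ quasiconformally into $V$ to get a map of the quasidisk $U^*$ onto $\{|z|<R\}$ with controlled constant. Then extend it to $\widehat\C$ using a quasiconformal reflection across $\partial U$. Its values on $\partial U$ are then automatically quasi-M\"obius.

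In the necessity sketch, the rough quasi-isometry of $f$ for $h_{U^*}$ does not by itself control $d_{V,U}$, which is an inner metric for curves forced to avoid $V$. You need to show that the image of a subcurve of unit hyperbolic length can be replaced by a curve of comparable length avoiding $V'$. This would use local quasisymmetry of $f$ at unit hyperbolic scale and the geometry of ${U'}^*\setminus V'$ near $\partial V'$, and conversely the quasidisk geometry of $V$. You then need to check that local quasisymmetry combined with large-scale quasi-isometry gives a quasi-M\"obius map. Also, $d_{V,U}$ is not a path metric on $\partial V$, only the restriction of one.

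These necessity gaps are repairable. As written, however, the sufficiency direction lacks its only nontrivial step.
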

Here $\chi$ denotes the chordal metric on $\widehat \C$, defined by 
\begin{align*}
\chi(z,w)= \frac{2|z-w|}{\sqrt{1+|z|^2}\sqrt{1+|w|^2}} \,\,\, \text{for $z,w\in\C$ and}\,\,\, \chi(z,\infty)=\frac{2}{\sqrt{1+|z|^2}}\,\,\, \text{for $z\in \C$}.
\end{align*}
Equivalently, one may use the spherical metric, which is bi-Lipschitz equivalent to the chordal one. Now we discuss the notion of a quasi-M\"obius map, as introduced by V\"ais\"al\"a \cite{Vaisala:quasimobius}. The \textit{cross ratio}\index{cross ratio} of a quadruple of distinct points $a,b,c,d$ in a metric space $(X,d)$ is defined as
$$[a,b,c,d]=\frac{d(a,c)d(b,d)}{d(a,d)d(b,c)}.$$
If $X=\widehat \C$, equipped with the chordal metric and $a,b,c,d\neq \infty$, then we have
$$[a,b,c,d]=\frac{|a-c||b-d|}{|a-d||b-c|}.$$
If one of the points is equal to $\infty$, then the factors containing that point are omitted. For instance,
$$[a,b,c,\infty]= \frac{|a-c|}{|b-c|}.$$

It is well-known that M\"obius transformations preserve cross ratios. Quasi-M\"obius maps are defined by requiring that they distort cross ratios in a controlled way. Specifically, a homeomorphism $f\colon X\to Y$ between metric spaces is \textit{quasi-M\"obius}\index{quasi-M\"obius map}\index{map!quasi-M\"obius} if there exists a homeomorphism $\eta\colon [0,\infty)\to [0,\infty)$ such that 
$$[f(a),f(b),f(c),f(d)]\leq \eta([a,b,c,d])$$
for every quadruple of distinct points $a,b,c,d\in X$. In this case we say that $f$ is $\eta$-quasi-M\"obius. Quasisymmetric maps are quasi-M\"obius and the converse is also true under some restrictions and normalizations. 

The reason for using quasi-M\"obius rather than quasisymmetric maps in Theorem \ref{theorem:ntalampekos_disjoint} is so that the statement becomes quantitative. We remark that \textit{quantitative dependence} is precisely what Problem \ref{problem:annuli} asks for. If we drop this requirement, then it would be elementary to map two quasidisks with disjoint closures onto two disks with a $K$-quasiconformal map, for a possibly very large $K$.

We close with an application of Theorem \ref{theorem:ntalampekos_tangent} in the case that $\partial V$ is the real line and $\partial U$ is the graph of a Lipschitz function.

\begin{theorem}[\cite{Ntalampekos:tangent}]\label{theorem:ntalampekos_lipschitz}
Let $f\colon \R\to (0,\infty)$ be a Lipschitz function. There exists a quasiconformal homeomorphism of $\C$ that preserves the real line and maps the graph of $f$ onto a line if and only if an antiderivative of $1/f$ is quasisymmetric. The statement is quantitative.
\end{theorem}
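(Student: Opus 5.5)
We will deduce this from Theorem \ref{theorem:ntalampekos_tangent} with $V=\{\operatorname{Im} z<0\}$ and $U=\{x+iy: y>f(x)\}$. Both are unbounded quasidisks: $\partial V=\R$, and $\partial U$ is the graph of a Lipschitz function, which is a quasicircle through $\infty$ with constant depending only on the Lipschitz constant $M$ of $f$. Since $f>0$, the closures are disjoint in $\C$. There is one subtlety. The theorem produces a map sending $U,V$ to half-planes, while the statement asks for a map that preserves $\R$. We first compose with a complex affine map taking $f(V)$ back to the lower half-plane. Then $\R=\partial V$ goes onto itself, and the graph goes onto a line parallel to $\R$. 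So the two statements are equivalent, and the whole task is to show that $\id\colon(\R,d_{V,U})\to(\R,|\cdot|)$ is quasisymmetric if and only if $F=\int 1/f$ is, quantitatively.

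The key step is a comparison estimate. We will show that
$$d_{V,U}(x,y)\simeq \Bigl|\int_x^y \frac{dt}{f(t)}\Bigr| \quad\text{whenever } |x-y|\lesssim \max(f(x),f(y)),$$
and more generally that $d_{V,U}$ is comparable to a quantity built from $F$. The reason is that $U^*\setminus V$ is the strip $S=\{0\le y\le f(x)\}$. A point $x+iy$ of $S$ lies at distance comparable to $f(x)-y$ from $\partial U$, because $f$ is Lipschitz. So the hyperbolic density of $U^*$ there is comparable to $1/\dist(\cdot,\partial U)$, by the Koebe quarter theorem applied to the simply connected domain $U^*$.

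For the upper bound we use the curve along $\R$ itself. Its length in this density is about $\int_x^y dt/f(t)=|F(y)-F(x)|$, since $\dist(t,\partial U)\simeq f(t)$ for real $t$ (Lipschitz again). This already gives $d_{V,U}\lesssim |F(y)-F(x)|$ globally.

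The lower bound is the main obstacle. A competing curve can rise into the strip, where the density is larger but the strip is wider elsewhere. I would handle it by projecting: for any curve $\gamma$ in $S$, the density at $\gamma(s)$ is at least $c/f(\operatorname{Re}\gamma(s))$, because $f(\operatorname{Re}\gamma)-\operatorname{Im}\gamma\le f(\operatorname{Re}\gamma)$ and the distance to $\partial U$ is at most this vertical gap. Hence
$$\ell_{h_{U^*}}(\gamma)\gtrsim \int_\gamma \frac{|d\operatorname{Re}\gamma|}{f(\operatorname{Re}\gamma)}\ge |F(y)-F(x)|.$$
So in fact $d_{V,U}(x,y)\simeq |F(x)-F(y)|$, with constants depending only on $M$.

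Once $d_{V,U}$ is bi-Lipschitz to the pullback metric $|F(x)-F(y)|$, the identity $(\R,d_{V,U})\to(\R,|\cdot|)$ is quasisymmetric if and only if $F^{-1}\colon\R\to\R$ is quasisymmetric. A bi-Lipschitz change of metric preserves quasisymmetry quantitatively. The inverse of a quasisymmetric homeomorphism of $\R$ is quasisymmetric, so this is the same as $F$ being quasisymmetric. The quantitative form of Theorem \ref{theorem:ntalampekos_tangent} then gives both directions, with the constants depending only on $M$ and the distortion function.

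The one place to check carefully is the comparability $\dist(z,\partial U)\simeq f(x)-y$ on $S$. The Lipschitz bound gives $\dist(z,\partial U)\ge (f(x)-y)/\sqrt{1+M^2}$. The vertical gap gives the reverse inequality $\dist(z,\partial U)\le f(x)-y$.
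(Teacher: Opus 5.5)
Your proposal is correct and follows the route the paper indicates. You apply Theorem \ref{theorem:ntalampekos_tangent} with $V$ the lower half-plane and $U$ the region above the graph, prove $d_{V,U}(x,y)\simeq |F(x)-F(y)|$ by comparing the hyperbolic density of $U^*$ with $1/(f(\operatorname{Re} z)-\operatorname{Im} z)$, and so reduce quasisymmetry of the identity to quasisymmetry of $F$. Two small points should be written out. First, $F$ maps onto $\R$, since $f(t)\le f(0)+M|t|$ forces $\int^{\pm\infty}dt/f=\pm\infty$; this is needed for $F^{-1}\colon\R\to\R$ to make sense. Second, for the converse reduction, a homeomorphism preserving $\R$ and mapping the graph onto a line (necessarily parallel to $\R$) sends $U$ and $V$ to half-planes, as one sees by matching the three complementary components.
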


See Figure \ref{fig:lipschitz} for an illustration. The theorem applies the Lipschitz function 
$$f(x)=\frac{1}{(|x|+1)^p},\,\,\, x\in \R,$$
where $p>-1$. Every antiderivative of $1/f$ is a power function, which can be shown to be quasisymmetric (see \cite{Heinonen:metric}*{Exercise 10.3}). Thus, by Theorem \ref{theorem:ntalampekos_lipschitz}, the graph of $f$ can be straightened to a line by a quasiconformal map of $\C$ that preserves the real line. On the other hand, for the Lipschitz function $g(x)=e^{-|x|}$ the antiderivatives of $1/g$ are not quasisymmetric, so there is no quasiconformal map that preserves the real line and straightens the graph of $g$ to a line. 

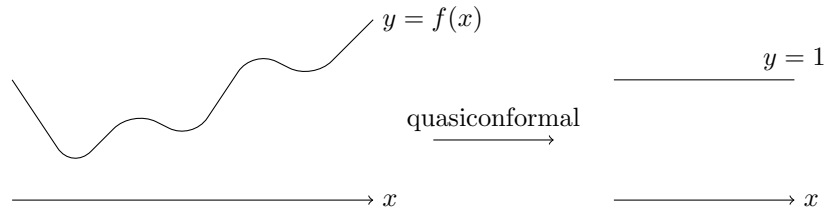
\begin{figure}
\begin{tikzpicture}[scale=0.8]
	\draw[->] (-3,0)--(3,0) node[right]{$x$};
	
	\draw[rounded corners=10pt] (-3,2)--(-2,0.5)--(-1,1.5)--(0,1)--(1,2.5)--(2,2)--(3,3) node[right] {$y=f(x)$};

	\draw[->] (4,1)--(6,1) node[pos=.5,above]{quasiconformal};
	\draw[->] (7,0)--(10,0) node[right]{$x$};
	\draw (7,2)--(10,2) node[above]{$y=1$};		
\end{tikzpicture}
	\caption{Illustration of Theorem \ref{theorem:ntalampekos_lipschitz}.}\label{fig:lipschitz}
\end{figure}

\bibliography{../../../biblio}
%\printindex
\end{document}